\newcommand{\calF}{\mathcal{F}}
    \def\big#1{{\hbox{$\left#1\vbox to 8.5pt{}\right.\n@space$}}}
    \def\n@space{\nulldelimiterspace=0pt \m@th}
    \def\m@th{\mathsurround=0pt}
    \def\mtx#1#2{\renewcommand{\arraystretch}{1.2}%
        \left( \begin{array}{#1}#2\end{array} \right)}
\def\det{\mathop{\hbox{\rm det}}}
\def\diag{\mathop{\hbox{\rm diag}}}
\def\drop{^{\null}}
\def\half{\textstyle{\frac{1}{2}}}
\def\fourth{\textstyle{\frac{1}{4}}}
\def\inv{^{-1}}
\def\mod#1{|#1|}
\def\norm#1{\Vert{#1}\Vert}
\def\T{^T\!}
\def\nulltag{\gdef\letter{}} \nulltag
\def\range{\mathop{\hbox{\rm range}}}
\def\ftilde{\tilde f}
\def\real{\mathbb{R}} 
\def\Qscr{{\cal Q}}
\def\nthinsp{\mskip -2   mu}
\def\superstar{^{\raise 0.5pt\hbox{$\nthinsp *$}}}
\def\norm#1{\Vert{#1}\Vert}
\def\thrfrths{{\textstyle{\frac{3}{4}}}}
\def\definedas{{\;\equiv\;}}
\def\norm#1{\Vert{#1}\Vert}
\def\base{{\mathbi{b}}}
\def\bb{{\mathbi{b}}}
\def\ba{{\mathbi{a}}}
\def\bc{{\mathbi{c}}}
\def\bA{{\mathbi{A}}}
\def\bB{{\mathbi{B}}}
\def\bC{{\mathbi{C}}}
\def\bg{{\mathbi{g}}}
\def\bp{{\mathbi{p}}}
\def\bptrans{\trans{\mathbi{p}}}
\def\bm{{\mathbi{m}}}
\def\boldp{{\bf p}}
\def\boldw{{\bf w}}
\def\btriangell{{\mathbi{\triang}_{\ell}}}
\def\btriangzer{{\mathbi{\triang}_{0}}}
\def\btriangone{{\mathbi{\triang}_{1}}}
\def\bP{{\mathbi{P}}}
\def\bv{{\mathbi{v}}}
\def\bvtrans{\trans{\mathbi{v}}}
\def\boldv{{\bf v}}
\def\expco{\chi}
\def\sublow{{_{\rm low}}}
\def\subhigh{{_{\rm high}}}
\def\subymid{{_{\rm mid}}}
\def\subxmid{{_{\rm mid}}}
\def\subbest{{_{\rm best}}}
\def\subworst{{_{\rm worst}}}
\def\subnext{{_{\rm next}}}
\def\subref{{_{\rm r}}}
\def\subout{{_{\rm out}}}
\def\subin{{_{\rm in}}}
\def\subleft{{_{\rm left}}}
\def\subright{{_{\rm right}}}
\def\flatconst{{\rho}}
\def\remain{r}
\def\objfun{f}
\def\fivehalves{\textstyle{\frac52}}
\def\threehalves{\textstyle{\frac32}}
\def\xtrsubrsq{{\xtrans_{\rm r}^2}}
\def\trans{\widetilde}
\def\Atrans{{\trans A}}
\def\htrans{{\trans h}}
\def\ytrans{{\trans y}}
\def\xtrans{{\trans x}}
\def\superlimit{^{\dag}}
\def\wtrans{{\trans w}}
\def\bptrans{{\trans\bp}}
\def\bplimit{{\bp\superlimit}}
\def\bptranslimit{{\bptrans\superlimit}} 
\def\definedas{:=}
\def\fgrad{\bg}
\def\fgradhat{{\widehat\fgrad}}
\def\threefourths{{\textstyle{\frac{3}{4}}}}
\def\fivefourths{{\textstyle{\frac{5}{4}}}}
\def\movedist{\kappa}
\def\thrfrths{{\textstyle{\frac{3}{4}}}}
\def\deltax{{\delta_x}}
\def\deltay{{\delta_y}}
\def\triang{{\Delta}}
\def\triangtrans{{\trans\triang}}
\def\flatness{{\Gamma}}
\def\mathbi#1{\boldsymbol{#1}}
\newcommand{\coordframe}{{\mathfrak F}}
\numberwithin{equation}{section} 
\newtheoremstyle{theorem-slant}
   {\topsep}
   {\topsep}
   {\slshape}
   {}
   {\bfseries}
   {}
   {.5em}
   {\thmname{#1}\thmnumber{ #2}.\thmnote{ (#3)}}
\theoremstyle{theorem-slant}
\newtheorem{theorem}{Theorem}[section]
\newtheorem{lemma}[theorem]{Lemma}
\newtheorem{proposition}[theorem]{Proposition}
\newtheorem{hypothesis}{Hypothesis}
\newtheoremstyle{definition-new}
   {\topsep}
   {\topsep}
   {}
   {}
   {\bfseries}
   {}
   {.5em}
   {\thmname{#1}\thmnumber{ #2}.\thmnote{ (#3)}}
\theoremstyle{definition-new}
\newtheorem{definition}[theorem]{Definition}
\newtheorem{example}[theorem]{Example}
\theoremstyle{remark}
\newtheorem{remark}[theorem]{Remark}
\def\abstract{\if@twocolumn
\section*{Abstract}
\else
\begin{center}
{\bf Abstract}
\end{center}
\quotation
\fi}
\def\endabstract{\if@twocolumn\else\endquotation\fi}
\title{Convergence of the restricted Nelder--Mead algorithm\\
in two dimensions}
\author{Jeffrey C. Lagarias\thanks{Department of Mathematics,
University of Michigan, Ann Arbor, Michigan 48109
(lagarias$@$umich.edu). The work of this author is partially supported by
National Science Foundation grant DMS-0801029.
The author also received support through the Mathematics Research Center
at Stanford University.}\\
   Bjorn Poonen\thanks{Department of Mathematics,
Massachusetts Institute of Technology, Cambridge, Massachusetts 02139
(poonen$@$math.mit.edu).  The work of this author is
supported by National Science Foundation grant DMS-0841321.}\\
   Margaret H. Wright\thanks{Computer Science Department,
New York University, New York, New York 10012 (mhw$@$cs.nyu.edu).
The work of this author is partially supported by
Department of Energy grant DE-FG02-88ER25053.}
}
\date{April 2, 2011}
\begin{document}

   \input epsf

\maketitle \thispagestyle{empty}

\begin{abstract}
The Nelder--Mead algorithm, a longstanding
direct search method for unconstrained
optimization published in 1965, is designed to minimize
a scalar-valued function $f$ of $n$ real variables
using only function values, without any derivative information.
Each Nelder--Mead iteration is associated with a
nondegenerate simplex defined by $n + 1$ vertices and
their function values; a typical iteration produces a new simplex
by replacing the worst vertex by a new point.  
Despite the method's widespread use,
theoretical results have been limited:
for strictly convex objective functions of one variable
with bounded level sets,
the algorithm always converges to the minimizer;
for such functions of two variables,
the diameter of the simplex converges to zero,
but examples constructed by McKinnon show
that the algorithm may converge to a nonminimizing point.

This paper considers the {\sl restricted\/} Nelder--Mead algorithm,
a variant that does not allow expansion steps.
In two dimensions we show that, for any nondegenerate starting simplex and
any twice-continuously differentiable function
with positive definite Hessian and bounded level sets,
the algorithm always converges to the minimizer.
The proof is based on treating the method as a discrete dynamical system,
and relies on several techniques that are non-standard in convergence
proofs for unconstrained optimization.
\end{abstract}

\abovedisplayskip 5pt plus2pt minus4pt
\belowdisplayskip \abovedisplayskip
\abovedisplayshortskip 3 pt plus 3pt
\belowdisplayshortskip 3pt plus3pt minus3pt
\setcounter{page}{1}  %

%
\section{Introduction} \label{sec-introduction}

Since the mid-1980s, interest has steadily grown in
{\sl derivative-free\/} methods (also called {\sl non-derivative\/} methods)
for solving optimization problems,
unconstrained and constrained.
Derivative-free methods that adaptively
construct a local model of relevant nonlinear functions
are often described as ``model-based'',
and derivative-free methods that do not explicitly involve such a model
tend to be called ``direct search'' methods.   
See
\cite{CSV09} for a recent survey of derivative-free methods;
discussions focusing on direct search methods include, 
for example, \cite{Dundee,Kelley-book,LTT00,KLT-SIREV,PowellActa}.

The Nelder--Mead (NM) simplex method \cite{NM} is a direct search method. 
Each iteration of the NM method begins with a
nondegenerate simplex (a geometric figure in $n$ dimensions
of nonzero volume that is the convex hull of $n+1$ vertices),
defined by its vertices and the associated values of $f$.
One or more trial points are computed, along with their
function values, and the iteration produces
a new (different) simplex such that the function values at
its vertices typically satisfy a descent condition
compared to the previous simplex.

The NM method is appealingly 
simple to describe (see Figure~\ref{fig-nmmoves}),
and has been widely used (along with numerous variants)
for more than 45 years, in many scientific 
and engineering applications. 
But little mathematical
analysis of any kind of the method's performance has appeared,
with a few exceptions such as \cite{WoodsPhD,HSW88} (from more than
20 years ago) and (more recently) \cite{HanNeumann06}.
As we discuss in more detail below,
obtaining even limited convergence proofs for the original method
has turned out to be far from simple.
The shortage of theory, plus the discovery of low-dimensional 
counterexamples (see~\eqref{eqn-mckinsamp})
have made the NM method an outlier among
modern direct search methods, which are deliberately
based on a rigorous mathematical foundation.  (See, for
example, \cite{CoopePrice01,AD03,KLT-SIREV,Audet04},
as well as more recent publications about direct search
methods for constrained problems.) 
Nevertheless the NM method retains importance
because of its continued use and availability in computer packages (see
\cite{Recipes,matlab,gsl})
and its apparent usefulness in some situations.

In an effort to develop positive theory about the original
NM algorithm, an analysis of its convergence behavior
was initiated in \cite{LRWW98} in 1998, along with resolution of
ambiguities in \cite{NM} about whether function comparisons involve
``greater than'' or ``greater than or equal''
tests.\footnote{Resolution of these ambiguities can have a
noticeable effect on the
performance of the algorithm; see \cite{Gurson00}.}
In what follows we use the term {\sl Nelder-Mead algorithm\/} to
refer generically to one of the precisely specified procedures in \cite{LRWW98};
these contain a  number of adjustable parameters (coefficients),
and the {\sl standard coefficients\/} represent an often-used choice.
For strictly convex objective functions with bounded level
sets, \cite{LRWW98} showed
convergence of the most general form of the NM algorithm to the
minimizer in one dimension.  For the NM algorithm
with standard coefficients in dimension two, where the simplex
is a triangle, it was shown that
the function values at the simplex vertices converge to
a limiting value, and furthermore that the diameter of the simplices
converges to zero.   But it was not shown that
the simplices always converge to a limiting point, and
up to now  this question
remains unresolved.

Taking the opposite perspective,
McKinnon~\cite{McKinnon98} devised a family of
two-dimensional counterexamples consisting of strictly convex
functions with bounded level sets and a specified initial
simplex, for which the NM simplices
converge to a nonminimizing point.
In the smoothest McKinnon example, the objective function is
\begin{equation}
\label{eqn-mckinsamp}
f_m(x, y) = \left\{ \begin{array}{ll}
             2400 |x|^3 + y + y^2 &~\mbox{if} ~~x \le 0\\
             6 x^3 + y + y^2 & ~\mbox{if}~~ x \ge 0,
\end{array}
\right.
\end{equation}
when the vertices of the starting simplex are
$(0,0)$, $(1,1)$ and $((1+\sqrt{33})/8,\; (1-\sqrt{33})/8))$.
Note that $f_m$ is twice-continuously differentiable
and that its Hessian is positive definite except at the
origin, where it is singular.  As
shown in Figure~\ref{fig-mckinfig}, the NM algorithm
converges to the origin (one of the initial vertices)
rather than to the minimizer $(0, -\frac{1}{2})$,
performing an infinite sequence of inside contractions
(see Section~\ref{sec-nmalgs}) in which the best vertex of
the initial triangle is never replaced. 
%
%
\begin{figure}[htb]
\epsfxsize = 4in
\epsfysize = 2.25in
$$\epsfbox{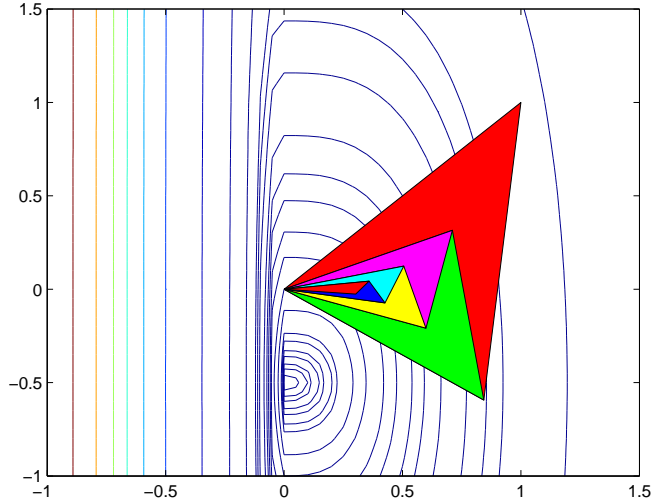}$$
\caption{\small The NM algorithm's failure on the
McKinnon counterexample~\eqref{eqn-mckinsamp}.}
\label{fig-mckinfig}
\end{figure}

Functions proposed by various authors on which the NM algorithm
fails to converge to a minimizer are surveyed in~\cite{McKinnon98},
but counterexamples in the McKinnon family illustrated by 
\eqref{eqn-mckinsamp} constitute the ``nicest'' functions for which
the NM algorithm converges to a non-stationary point.

An algorithmic flaw that has been observed is that the iterations ``stagnate''
or ``stall'', often because the simplex becomes
increasingly close to degenerate (as depicted in Figure~\ref{fig-mckinfig}).  
Previously proposed corrective strategies include: placing
more restrictions on moves that decrease
the size of the simplex; imposing a
``sufficient decrease'' condition (stronger than simple
decrease) for accepting a new vertex; and
resetting the simplex to one that is ``nice''.
See, for example,
\cite{Rykov83,WoodsPhD,Tse99,Kel99,Priceetal02,NT02,CSV09}, a
small selection of the many papers that
include convergence results for modifications of Nelder--Mead.

Our object in this paper is to fill in additional theory for the NM
algorithm in the two-dimensional case, which remains of
interest in its own right.
As noted by McKinnon \cite[page 148]{McKinnon98}, it is not even known
whether
the NM algorithm converges for the prototypically nice 
function $\objfun(x,y)= x^2+y^2$.
Here we answer this question
affirmatively for a simplified variant of the NM algorithm,
where the simplification reduces the number of allowable moves
rather than attempting to ``fix'' the method.
In the original NM algorithm
(see Section~\ref{sec-nmalgs}), the allowable moves
are reflection, expansion, outside contraction,
inside contraction, and shrink; 
an expansion doubles the volume of an NM simplex, while
all other moves either leave the volume the same or decrease it.
An expansion is tried only after the
reflection point produces a strict improvement in the
best value of $\objfun$; the motivation is to allow
a longer step along an apparently promising direction.
The {\sl restricted\/} Nelder--Mead (RNM) algorithm defined
in Section~\ref{sec-nmalgs} does not allow expansion steps.
Thus we are in effect considering a ``small step'' NM algorithm.

Our analysis applies to the following class of functions:
\begin{definition}
Let $\calF$ denote the class of twice-continuously
differentiable functions $\objfun \colon \real^2 \to \real$ 
with bounded level sets and everywhere positive definite Hessian.
\end{definition}
The class $\calF$ is a
subclass of those considered in \cite{LRWW98}, where there is
no requirement of differentiability.

The contribution of this paper is to prove
convergence of the restricted Nelder-Mead algorithm for functions
in $\calF$:

%
%

\begin{theorem}[appears again as Theorem~\ref{thm-converges}]
\label{thm-rnmconv}
If the RNM algorithm is applied to 
a function $\objfun \in \calF$,
starting from any nondegenerate triangle, 
then the algorithm converges to the unique minimizer of $\objfun$.
\end{theorem}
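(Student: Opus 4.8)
\medskip
The plan is to treat the RNM iteration as a discrete dynamical system on labelled nondegenerate triangles and to eliminate every limiting behaviour except convergence to the minimizer. Write $x^{*}$ for the unique minimizer of $\objfun$ (unique because an everywhere positive definite Hessian makes $\objfun$ strictly convex with bounded level sets), $S_k$ for the $k$-th triangle with vertices $x_1,x_2,x_3$ ordered so that $\objfun(x_1)\le\objfun(x_2)\le\objfun(x_3)$, and $c_k$ for its centroid.

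First I would record the elementary invariants. An RNM step replaces the worst vertex by a point of no larger value, so the ordered vertex values are nonincreasing and converge to limits $\objfun_1^{*}\le\objfun_2^{*}\le\objfun_3^{*}$, and all $S_k$ stay in the compact level set $L=\{\objfun\le\objfun(x_3^{(0)})\}$; since RNM forbids expansion, the areas $|S_k|$ are nonincreasing, and every non-reflection step (outside or inside contraction, or shrink) multiplies the area by a factor at most $\tfrac12$. Arguments paralleling those of \cite{LRWW98} then give $\objfun_1^{*}=\objfun_2^{*}=\objfun_3^{*}=:\objfun^{*}$ and $\operatorname{diam}(S_k)\to0$. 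Granting this, it suffices to prove $\objfun^{*}=\objfun(x^{*})$: once all three vertex values converge to $\objfun(x^{*})$, the vertices lie eventually inside $\{\objfun\le\objfun(x^{*})+\varepsilon\}$ for every $\varepsilon>0$, and since these sets shrink to $\{x^{*}\}$ the triangles converge to $x^{*}$.

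The heart of the argument is to rule out $\objfun^{*}>\objfun(x^{*})$. In that case, since $\operatorname{diam}(S_k)\to0$, some subsequence of the triangles shrinks to a point $z$ with $\objfun(z)=\objfun^{*}$, which lies on a genuine level curve of $\objfun$, so $g:=\nabla\objfun(z)\ne0$; put $\ell(x)=\objfun(z)+g\cdot(x-z)$, which approximates $\objfun$ to within $O(\operatorname{diam}(S_k)^{2})$ on a triangle near $z$. For a genuinely affine $\ell$ the RNM dynamics is completely explicit: the reflected worst vertex is $x_r=x_1+x_2-x_3$, so $\ell(x_r)=\ell(x_1)+\ell(x_2)-\ell(x_3)<\ell(x_2)$ (because $\ell$ is nonconstant on a nondegenerate triangle), and hence \emph{every} step is a reflection; a reflection is the point reflection of the triangle through the midpoint of the edge $x_1x_2$ (a rotation by $180^\circ$), so $|S_k|$, the edge directions, and the $\ell$-range $R=\ell(x_3)-\ell(x_1)>0$ are all invariant, while the centroid moves by $\tfrac13(x_1+x_2-2x_3)$, so $\ell(c_{k+1})-\ell(c_k)\le-\tfrac13R<0$. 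Thus near a nonminimizer the affine model forces reflection steps that simultaneously hold the triangle at a fixed positive diameter and drive its centroid steadily downhill; provided $S_k$ stays non-degenerate enough that $R$ remains comparable to $\operatorname{diam}(S_k)$ — so that the $O(\operatorname{diam}(S_k)^{2})$ error cannot change which RNM branch is taken — this is incompatible with $\operatorname{diam}(S_k)\to0$ and with the vertex values converging. Hence $\objfun^{*}\le\objfun(x^{*})$, i.e.\ $\objfun^{*}=\objfun(x^{*})$, and the theorem follows.

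I expect the main obstacle to be exactly the italicized proviso above. A priori, near a nonminimizer $z$ the triangle could instead degenerate into an ever-thinner sliver whose long axis becomes ever more precisely aligned with a level line of $\ell$, making $R$ much smaller than $\operatorname{diam}(S_k)$, so that the affine model no longer governs the move and RNM contracts repeatedly and stalls — this is precisely the mechanism of McKinnon's counterexample \eqref{eqn-mckinsamp}, which lives at a point with nonzero gradient but \emph{singular} Hessian. Excluding it is where the hypothesis $\objfun\in\calF$ must enter quantitatively: I would introduce a flatness parameter measuring how close $S_k$ is to degeneracy and carry out a detailed case analysis over the admissible sequences of RNM moves to show that, near a point where the Hessian is bounded below, no single RNM step can let that parameter deteriorate without bound, so a degenerating triangle is forced back toward a non-degenerate shape. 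The remaining technical points are the verification of $\operatorname{diam}(S_k)\to0$ in the RNM setting and the handling of borderline ``near-tie'' comparisons of function values in the case analysis; the rest — the monotonicity and area bookkeeping, the affine Taylor estimate together with the explicit reflection computation, and the final passage from $\objfun^{*}=\objfun(x^{*})$ to convergence of the triangles — is routine.
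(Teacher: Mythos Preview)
Your overall architecture matches the paper's: argue by contradiction, observe that near a non-minimizing limit point the gradient is nonzero, and try to show that the RNM dynamics is incompatible with stalling there. You have also correctly located the central difficulty --- the triangle may flatten along a level line so that the first-order (affine) model no longer determines which branch is taken.

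The gap is in how you propose to resolve that difficulty. Your sketch says you would show that ``no single RNM step can let that parameter deteriorate without bound, so a degenerating triangle is forced back toward a non-degenerate shape.'' This is not what happens, and an argument in that direction will not close. Under your contradiction hypothesis the triangles \emph{must} become arbitrarily flat: the paper's Lemma~\ref{lem-rattozero} proves $\htrans_k/\wtrans_k \to 0$ in a suitable local frame, precisely because otherwise your affine-reflection argument \emph{does} go through and drives the centroid below the limiting level. So the affine model is eventually irrelevant, and the real work has to be done at second order.

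Concretely, the paper introduces a local coordinate frame in which $\objfun$ takes the quadratic normal form $\objfun(\bb)+\ytrans+\tfrac12\xtrans^2+r(\xtrans,\ytrans)$; positive-definiteness of the Hessian is used exactly here, to make this frame uniformly well-conditioned on compact sets away from the minimizer (Lemma~\ref{lem-affdef}). The right invariant is $\flatness=\Atrans/\wtrans^3$, not height over width. The logic then runs \emph{opposite} to your sketch: one shows (Lemma~\ref{lem-smalldelta}) that a contraction at a late stage forces $\flatness\le 10$, and then --- via a rescaling to the model function $\tfrac12\lambda^2+\mu$ and a computer-aided exhaustive case analysis over all admissible RNM move sequences of length up to $14$ (Proposition~\ref{prop-14steps}) --- that within $14$ steps after any contraction $\flatness$ must \emph{increase} by a factor at least $1.01$. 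Since reflections preserve $\flatness$ in a fixed frame and there must be infinitely many contractions, $\flatness$ grows without bound, contradicting $\flatness\le 10$ at the next contraction. A one-step analysis of the kind you envisage is not enough: some individual contractions can decrease $\flatness$, and only over blocks of up to $14$ moves is a net increase guaranteed; the bookkeeping of switching coordinate frames between blocks (Lemmas~\ref{lem-closebase} and~\ref{lem-must-contract}) is itself a nontrivial part of the argument.
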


\begin{remark}
Theorem~\ref{thm-rnmconv}
immediately implies a generalization to a larger class of functions.
Namely, if $\objfun \in \calF$, 
and $g\colon \real \rightarrow \real$ is a strictly increasing function,
then the RNM algorithm applied to $\ftilde := g \circ \objfun$ converges,
because the RNM steps for $\ftilde$ 
are identical to those for $\objfun$.
\end{remark}

\begin{remark}
Because the NM iterations 
in the McKinnon examples include no expansion steps, the
RNM algorithm also will fail
to converge to a minimizer on these examples.
It follows that, in order to obtain a positive convergence result,
additional assumptions on the function over those in \cite{LRWW98}
must be imposed.
In particular, the positive-definiteness condition on the Hessian
in Theorem~\ref{thm-rnmconv} rules out the smoothest
McKinnon example (\ref{eqn-mckinsamp}), in which the Hessian is singular at
the origin (the nonminimizing initial vertex to which the
NM algorithm converges).
\end{remark}

An interesting general property of the Nelder--Mead algorithm
is the constantly changing shape of the simplex
as the algorithm progresses.
Understanding the varying geometry of the simplex seems
crucial to explaining how the algorithm behaves.
Our proof of Theorem~\ref{thm-rnmconv}
analyzes the RNM algorithm as a discrete dynamical
system, in which the shapes of the relevant simplices
(with a proper scaling) form 
a phase-space for the algorithm's behavior. The imposed
hypothesis on the Hessian, which is stronger than strict convexity,
allows a crucial connection to be made between a (rescaled) local
geometry and the vertex function values.
We analyze the algorithm's behavior in
a transformed coordinate system that corrects for this rescaling.

The proof of Theorem~\ref{thm-rnmconv} establishes convergence by
contradiction, by showing that the algorithm can find no way not
to converge. We make, in effect, a ``Sherlock Holmes'' argument: Once you
have eliminated the impossible, whatever remains, however improbable,
must be the truth.\footnote{A.\ Conan Doyle, ``The Sign of the Four'',
{\sl Lippincott's Monthly Magazine}, February 1890.}
We show that, in order not to converge to the minimizer, the triangles
would need to flatten
out according to a particular geometric scaling, but there
is no set of RNM steps permitting this
flattening to happen.  This result is confirmed
through an auxiliary potential function measuring
the deviation from scaling.
One can almost say that the RNM algorithm converges
in spite of itself.

%
%

\section{The restricted Nelder--Mead algorithm}\label{sec-nmalgs}

Let $f \colon \real^n \to \real$ be a function to be minimized,
and let $\boldp_1,\ldots,\boldp_{n+1}$ be the vertices of
a nondegenerate simplex in $\real^n$.
One iteration of the RNM algorithm (with standard coefficients)
replaces the simplex by a new one according to the following
procedure.

\smallskip
\noindent
{\bf One iteration of the standard RNM algorithm.}
\begin{enumerate}
\item {\bf Order.}
Order and label the $n+1$ vertices to satisfy
$f(\boldp_1) \le f(\boldp_2) \leq \cdots \le f(\boldp_{n+1})$,
using appropriate tie-breaking rules such as those in \cite{LRWW98}.

\item \label{step-reflect}
{\bf Reflect.}
Calculate $\bar\boldp = \sum_{i=1}^n \boldp_i/n$, the average of the $n$
best points (omitting $\boldp_{n+1}$).
Compute the {\sl reflection point\/} $\boldp\subref$, defined as
$\boldp\subref =  2\bar\boldp - \boldp_{n+1}$,
and evaluate $f\subref = f(\boldp\subref)$.
If $f\subref < f_n$, accept the reflected point $\boldp\subref$
and terminate
the iteration.

\item\label{step-contract}
{\bf Contract.}
If $f\subref \ge f_n$,
perform a {\sl contraction} between $\bar\boldp$ and the better of
$\boldp_{n+1}$ and $\boldp\subref$.

{\bf a.  Outside contract.}\quad
If $f_n \le f\subref < f_{n+1}$ (i.e., $\boldp\subref$ is strictly
better than
$\boldp_{n+1}$), perform an {\sl outside contraction\/}:
calculate the outside contraction point
$\boldp\subout = \half(\bar\boldp +\boldp\subref)$,
and evaluate $f\subout = f(\boldp\subout)$.  If $f\subout \le f\subref$, accept
$\boldp\subout$ and terminate the iteration;
otherwise, go to Step \ref{step-shrink}
(perform a shrink).

{\bf b.  Inside contract.}\quad
If $f\subref \ge f_{n+1}$, perform an {\sl inside contraction\/}:
calculate the inside contraction point
$\boldp\subin = \half(\bar\boldp + \boldp_{n+1})$,
and evaluate
$f\subin = f(\boldp\subin)$.  If $f\subin < f_{n+1}$, accept
$\boldp\subin$ and terminate the iteration; otherwise, go to
Step \ref{step-shrink}
(perform a shrink).

\item\label{step-shrink}
{\bf Perform a shrink step.}
Evaluate $f$ at the $n$ points
$\bv_i = \half(\boldp_1 + \boldp_i)$, $i = 2$, \dots, $n+1$.
The (unordered) vertices of the simplex at the
next iteration consist of $\boldp_1$, $\bv_2$, \dots, $\bv_{n+1}$.
\end{enumerate}
The result of an RNM iteration is either: (1) a single new
vertex---the {\sl accepted point}---that
replaces the worst vertex $\boldp_{n+1}$ in the set of vertices for the next
iteration; or (2) if a shrink is
performed, a set of $n$ new points that, together with
$\boldp_1$, form the simplex at the next iteration.

%
%

\begin{figure}[htb]
$$\epsfbox{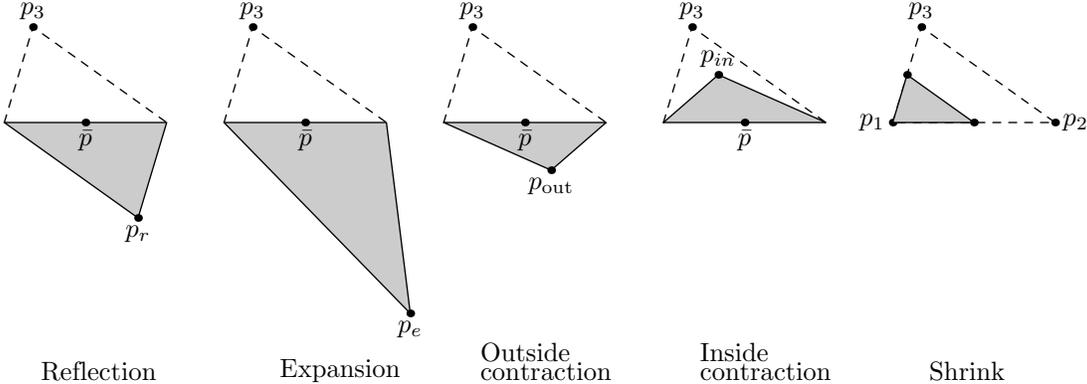}$$
\caption{\small The five possible moves in the original NM algorithm
are shown.  The original simplex is surrounded by a
dashed line, and its worst vertex is labeled $\boldp_3$.
The point $\bar\boldp$ is the average of the two best vertices.
The shaded figures are NM simplices following reflection,
expansion, outside contraction, inside contraction, and shrink,
respectively.
(In the ``shrink'' figure, the best vertex is labeled $\boldp_1$.)
The ``expansion'' step is omitted in the RNM algorithm.}
\label{fig-nmmoves}
\end{figure}

Starting from a given nondegenerate simplex,
let $\boldp_1^{(k)}$, \dots, $\boldp_{n+1}^{(k)}$
be the vertices at the {\sl start} of the $k^{\textup{th}}$ iteration.
Let $\mathbf{z} \in \real^n$ be a point.
We say that the RNM algorithm converges to $\mathbf{z}$ if
$\lim_{k \to \infty} \boldp_i^{(k)} = \mathbf{z}$ 
for every $i \in \{1,\ldots,n+1\}$.

%
%
\begin{remark}
In two dimensions, a reflect step performs a $180^\circ$ 
rotation of the triangle around $\bar\boldp$, so the resulting
triangle is congruent to the original one.
But in higher dimensions, the reflected simplex is not congruent
to the original.
\end{remark}

%
%
\begin{remark}
Shrink steps are irrelevant in this paper because we are concerned only
with strictly convex objective functions, for which shrinks cannot
occur (Lemma 3.5 of \cite{LRWW98}).  It follows that, at
each NM iteration, the function value at the new vertex is
strictly less
than the worst function value at the previous iteration.
\end{remark}

%
%
\begin{remark}
The original Nelder--Mead algorithm differs from the above
in Step~2.  Namely, if $\boldp\subref$ 
is better than all $n+1$ of the vertices,
the original NM algorithm tries evaluating $f$ at the
{\sl expansion\/} point
$\boldp_{\rm e} \definedas \bar\boldp + \expco (\bar\boldp - \boldp_{n+1})$
for a fixed expansion coefficient $\expco>1$, and
the worst vertex $\boldp_{n+1}$ is then replaced by the
better of $\boldp_{\rm e}$ and $\boldp\subref$.
In fact, Nelder and Mead proposed a family of algorithms,
depending on coefficients for reflection, contraction, and shrinkage
in addition to expansion.
A complete, precise definition of an NM iteration
is given in \cite{LRWW98}, along with a set of tie-breaking rules.
Instances of the moves in the original NM algorithm are shown in
Figure~\ref{fig-nmmoves}.
\end{remark}

%
%
\begin{remark}
One feature of the RNM algorithm 
that makes it easier to analyze than the original algorithm
is that the volume of the simplex is non-increasing at each step.
The volume thus serves as a 
Lyapunov function.\footnote{See Definition~1.3.4 in \cite[page 23]{StuHumph}.}
\end{remark}

We henceforth consider the RNM algorithm 
{\sl in dimension two}, for which it is known that the
simplex diameter converges to zero.

%
%

\begin{lemma}\label{lem-diamtozero}
Suppose that the RNM algorithm is applied to a
strictly convex $2$-variable function 
with bounded level sets. Then for any
nondegenerate initial simplex, the diameters of the RNM simplices
(triangles) produced by the algorithm converge to $0$.
\end{lemma}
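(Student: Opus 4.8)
The plan is to view one RNM iteration as a map on the space of triangles contained in a fixed compact region, and to drive the argument with three monotone quantities of a triangle: its worst vertex value, the sum of its three vertex values, and its area. Write $\boldp_1^{(k)},\boldp_2^{(k)},\boldp_3^{(k)}$ for the ordered vertices and $f_i^{(k)}=\objfun(\boldp_i^{(k)})$. Since $\objfun$ is strictly convex, shrink steps never occur (Lemma~3.5 of \cite{LRWW98}), so each iteration replaces only $\boldp_3$ by one new point of value $<f_3$; hence $f_3^{(k)}$ is non-increasing, every vertex of every triangle lies in the compact sublevel set $L_0:=\{\objfun\le f_3^{(0)}\}$, diameters are bounded, and one may pass to Hausdorff-convergent subsequences of triangles. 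The new triple of values is $\{f_1^{(k)},f_2^{(k)},v^{(k)}\}$ with $v^{(k)}<f_3^{(k)}$; at most one of these (namely $v^{(k)}$) can exceed $f_2^{(k)}$, so the second largest, which is $f_3^{(k+1)}$, satisfies $f_3^{(k+1)}\le f_2^{(k)}\le f_3^{(k)}$. Also the minimum $f_1^{(k)}$ is non-increasing, hence converges to some $\alpha$, while $S^{(k)}:=f_1^{(k)}+f_2^{(k)}+f_3^{(k)}$ is strictly decreasing and bounded below, hence converges; therefore the per-iteration decrease $f_3^{(k)}-v^{(k)}$ tends to $0$, and $f_3^{(k)}\to\gamma$ and $f_2^{(k)}=S^{(k)}-f_1^{(k)}-f_3^{(k)}\to\beta$ for some $\alpha\le\beta\le\gamma$. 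Letting $k\to\infty$ in $f_3^{(k+1)}\le f_2^{(k)}$ gives $\gamma\le\beta$, so $\beta=\gamma$.

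A short affine computation shows that an inside contraction and an outside contraction each replace the triangle by one of exactly half the area, while a reflection is a point reflection through $\bar\boldp$, hence a congruence preserving the area. So the area is non-increasing (consistent with the RNM Lyapunov property) and converges to some $A^*\ge0$, with $A^*=0$ exactly when infinitely many iterations are contractions. Accordingly there are two cases: (1) infinitely many contractions, hence $A^*=0$; or (2) only finitely many contractions, so eventually every iteration is a reflection, the triangle is ultimately congruent to a fixed shape $T_0$, and $A^*>0$.

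In Case~1 I argue by contradiction: if the diameters do not tend to $0$, choose a subsequence along which $\operatorname{diam}\ge\epsilon$ and the triangles converge to a limiting configuration $T^*$; since $A^*=0$, $T^*$ is a nondegenerate segment with vertex values $\alpha,\beta=\gamma,\gamma$. Pass to a further subsequence of constant iteration type. At an inside contraction the accepted point is the midpoint of $\bar\boldp$ and $\boldp_3$, its value is $<f_3$, and the per-iteration decrease tends to $0$; so in the limit $\objfun$ takes the value $\gamma$ at $\boldp_3^*$ and at the midpoint of $\bar\boldp^*$ and $\boldp_3^*$, whence strict convexity forces $\objfun(\bar\boldp^*)>\gamma$, contradicting $\objfun(\bar\boldp^*)\le\half(\objfun(\boldp_1^*)+\objfun(\boldp_2^*))=\half(\alpha+\gamma)\le\gamma$; outside contractions are dispatched the same way, with the accepted point the midpoint of $\bar\boldp$ and $\boldp\subref$ (limiting value $\le\gamma$ by the outside-contraction test). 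At a reflection, $\objfun$ restricted to the line carrying the degenerate $T^*$ is strictly convex and would take the value $\gamma$ at the three collinear points $\boldp_2^*,\boldp_3^*,\boldp\subref^*$, which is impossible unless two coincide — and each coincidence forces $\alpha=\gamma$, whereupon $\alpha=\beta=\gamma$ puts all of $T^*$ on the strictly convex level curve $\{\objfun=\gamma\}$, which no segment of positive length meets in more than two points. So in Case~1 the diameters converge to $0$.

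Case~2 is where I expect the real difficulty. A reflection point-reflects the triangle through the midpoint of its two best vertices and makes the old middle vertex the new worst; from this one computes that two consecutive reflections compose to a \emph{translation} by $\boldp_1^{(k)}-\boldp_3^{(k)}$ (best minus worst), whose length is at least the shortest side of $T_0$. If $\alpha<\gamma$, the best vertex is eventually never replaced (because $f_1^{(k)}$ is non-increasing while $\objfun(\boldp\subref^{(k)})\to\gamma$), and with the best vertex fixed the displacement vectors $(\boldp_2-\boldp_1,\boldp_3-\boldp_1)$ obey the linear recursion with matrix $\left(\begin{smallmatrix}1&-1\\1&0\end{smallmatrix}\right)$, whose cube is $-I$; hence the orbit is periodic of period dividing $6$, contradicting the strict decrease of $S^{(k)}$ at each reflection. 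If instead $\alpha=\beta=\gamma$, then along a convergent subsequence the limiting triangle $T^*$ and its translate $T^*+(\boldp_1^*-\boldp_3^*)$ both have all their (nondegenerate) vertices on $\{\objfun=\gamma\}$ — again because the per-iteration decrease tends to $0$ — forcing three distinct parallel chords of equal length on that strictly convex curve, which is impossible since chord length in a fixed direction is a concave function of the transverse coordinate. Thus Case~2 cannot occur, so we are always in Case~1 and the diameters converge to $0$. The main obstacle is precisely this reflection-only regime, and within it the degenerate sub-case $\alpha=\beta=\gamma$, where the combinatorics of which vertex is best need not stabilize and one must argue through the limiting geometry rather than via periodicity.
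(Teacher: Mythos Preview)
The paper does not actually prove this lemma; it simply invokes \cite[Lemma~5.2]{LRWW98} and observes that the same proof goes through when expansion steps are disabled. Your argument is therefore a from-scratch reconstruction, and the overall strategy --- exploit the monotonicity of the ordered vertex values and of the area, split on whether contractions occur infinitely often, and in each case extract a contradiction from a subsequential limit configuration --- is essentially the line of \cite{LRWW98}. Your Case~2(a) observation (once the best vertex is frozen, the displacement pair obeys a linear recursion whose matrix has order~$6$, forcing periodicity in contradiction with the strict decrease of $S^{(k)}$) is particularly clean.

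There is one genuine slip, in Case~2(b). You assert that $T^*$ and its translate $T^*+\boldv$, with $\boldv=\boldp_1^*-\boldp_3^*$, yield \emph{three} distinct parallel chords of equal length on $\{\objfun=\gamma\}$, and then appeal to strict concavity of the chord-length function. But $\boldp_3^*+\boldv=\boldp_1^*$, so the chords $[\boldp_3^*,\boldp_1^*]$ and $[\boldp_1^*,\boldp_1^*+\boldv]$ lie on the \emph{same} line; you have only two transverse positions, not three, and the chord-length argument does not bite. The repair is easier than what you wrote: the three distinct collinear points $\boldp_3^*$, $\boldp_1^*$, $2\boldp_1^*-\boldp_3^*$ all lie on the strictly convex curve $\{\objfun=\gamma\}$, which is already impossible since a line meets that curve in at most two points.

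A smaller point: in Case~1 you gloss over the degenerate configurations (for instance $\bar\boldp^*=\boldp_3^*$ in the inside-contraction subcase, or $\boldp_3^*=\boldp\subref^*$ in the reflection subcase). Each of these does collapse via strict convexity on the carrying line, but at least one should be written out; otherwise the phrase ``dispatched the same way'' hides a small case analysis.
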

\begin{proof}
The proof given in \cite[Lemma~5.2]{LRWW98} for the original
NM algorithm applies even when expansion steps are disabled.
\end{proof}

%
%
\section{Convergence}\label{sec-convproof}

%
%

\subsection{The big picture}
\label{sec-bigpic}

Because the logic of the
convergence proof is complicated, 
we begin with
an overview of the argument.
Each $\objfun \in \calF$ is strictly convex, so by Lemma~\ref{lem-diamtozero}
we know that 
 the evolution of any triangle under the RNM algorithm 
has the diameter of the triangle
converging to zero.
(We do not yet know that the triangles converge
to a limit point.)  
The convergence proof proceeds by contradiction, making an initial hypothesis
(Hypothesis~1 in Section~\ref{sec-levelset}) that the (unique) minimizer
of $\objfun$ is not a limit point of the RNM triangles.  Under
this condition,
all three RNM vertices must approach a level set corresponding
to a function value strictly higher than the optimal value.  
By our assumptions on $\objfun$,
this level set is a strictly convex closed
curve with a continuously differentiable tangent vector.

The RNM triangle must become small as it approaches this bounding
level set.  Therefore, from the viewpoint of the triangle, blown up
to have (say) unit diameter, the level set flattens out to
a straight line.  The heuristic underlying our argument is that, in
order for this to happen, the triangle must itself have its shape
flatten out, with its width in the level set direction
(nearly horizontal, as seen from the triangle) being roughly the
square root of its height in the perpendicular direction. 
In particular, its width becomes proportionally much larger than its height.
A local coordinate frame (Section~\ref{sec-coordframe}) is defined 
in order to describe this phenomenon.

At the {\sl start\/} of iteration $k$,
we measure area and width in a local coordinate frame,
and define a quantity called ``flatness'' by 
$\flatness_k \definedas \hbox{\rm area}_k\drop/\hbox{\rm width}_k^3$.
If a reflection is taken during iteration $k$ and the same
coordinate frame is retained,
the area and width of the RNM triangle at iteration $k+1$ remain
the same, so $\flatness_{k+1} = \flatness_k$.
Hence, in order for the diameter to
converge to zero (Lemma~\ref{lem-diamtozero}), 
there must be infinitely many contraction steps.
We show that, at a sufficiently advanced iteration $k$ of the
RNM algorithm, a necessary condition for a contraction to occur
is that $\flatness_k \le 10$; we also show that
the value of $\flatness$ eventually
unavoidably increases as the algorithm proceeds.
A contradiction thus arises because no combination of the permitted
reflection and contraction steps allows the needed square root
rate of decrease. 

The argument is complicated because the local coordinate frame changes at
every step.  Near the end of the proof (in Proposition~\ref{prop-14steps}),
we analyze sequences of no more than 14 steps, beginning
with a contraction, in an
advanced phase of the algorithm.  Using a
coordinate frame defined by a vertex of the first triangle
in the sequence, we show that switching to a new coordinate
system defined via the final triangle in the sequence makes only
a small change in the flatness.  This allows us to show that the
flatness is inflated by a factor of at least $1.01$ 
after at most $14$ steps, which eventually means that a contraction
cannot be taken.  Since the triangle cannot reflect forever,
our contradiction
hypothesis must have been false; i.e., the method must converge.

%
%

\subsection{Notation}

Points in two dimensions are denoted by boldface lower-case letters,
but a generic point is often called $\bp$, which is treated as
a column vector and written as $\bp = (x, y)^T$.
We shall also often use an affinely transformed coordinate system
with generic point denoted by $\bptrans= (\xtrans, \ytrans)^T$.
To stress the $(x,y)$ coordinates of a specific
point, say $\bb$, we write $\bb = (b_x, b_y)^T$.

For future reference, we explicitly give the formulas for the
reflection and contraction points in two dimensions:
\begin{eqnarray}
\bp\subref & = &
\bp_1 + \bp_2 - \bp_3\qquad\qquad \hbox{(2-d reflection)};
\label{eqn-twodref}\\
\bp\subout & = & \thrfrths(\bp_1 + \bp_2) - \half\bp_3
\qquad\hbox{(2-d outside contraction)};
\label{eqn-twodoutc}\\
\bp\subin & = & \fourth(\bp_1 + \bp_2) + \half\bp_3
\qquad\hbox{(2-d inside contraction)}.
\label{eqn-twodinc}
\end{eqnarray}
Given the three vertices of a triangle,
the reflection and contraction points
depend only on which (one) vertex is labeled as ``worst''.

%
%

\subsection{A changing local coordinate system}
\label{sec-coordframe}

The type of move at each RNM iteration
is governed by a discrete decision,
based on comparing values of $f$.
Heuristically, for a very small triangle near a point $\base$,
the result of the comparison is usually unchanged
if we replace $f$ by its degree-$2$ Taylor polynomial
centered at $\base$.
If $\base$ is a nonminimizing point,
then we can simplify the function further by making
an affine transformation into a new coordinate system
$\bptrans=(\xtrans,\ytrans)$ (depending on $\base$)
in which the Taylor polynomial has the form
\[
	\textup{constant} + \ytrans + \half \xtrans^2.
\]
This motivates the following lemma, which is a version of Taylor's theorem.

%
%

\begin{lemma}[Definition of local coordinate frame.]
\label{lem-affdef}
Let $\objfun \in \calF$.
Given a point $\base$ and a nonsingular $2 \times 2$ matrix $M$,
we may define an affine transformation
\begin{equation}
\label{eqn-affinet}
    \bptrans = M\inv(\bp-\base)
\end{equation}
(with inverse map $\bp = M\bptrans + \base$).
\begin{enumerate}[\upshape (i)]
\item\label{I:existence of M}
For each point $\base$ that is not the minimizer of $\objfun$,
there exists a unique $M$ with $\det M>0$
such that when
the function $\objfun$ of $\bp = (x,y)^T$ is re-expressed in
the new coordinate system $\bptrans=(\xtrans,\ytrans)^T$ above,
the result has the form
\begin{equation}
\label{eqn-objtransform}
    \objfun(\bp)  = \objfun(\bb) + \ytrans + \half \xtrans^2 + \remain(\xtrans,\ytrans),
\end{equation}
where $\remain$ is an error term satisfying
\begin{equation}
\label{eqn-remaindef}
   \remain(\xtrans,\ytrans) = \half\alpha\ytrans^2 +
     o(\max(|\xtrans|^2, |\ytrans|^2)),
\end{equation}
as $(\xtrans,\ytrans) \to \mathbf{0}$ (i.e., as $(x,y)^T \to \base$),
for some $\alpha>0$.
\item\label{I:function r}
The function $\remain$ in~\eqref{I:existence of M} satisfies
$d\remain/d\xtrans = o(\max(|\xtrans|,|\ytrans|))$ and
$d\remain/d\ytrans = o(|\xtrans|) + O(|\ytrans|)$,
and the rate at which the $o(\cdot)$ terms approach zero and
the bounds implied by $O(\cdot)$ can be made uniform for
$\base$ in any compact set 
not containing the minimizer of $\objfun$.

\item\label{I:uniformly continuous} 
As $\base$ varies over a compact set
not containing the minimizer of $\objfun$, the matrices $M$ and $M^{-1}$
are bounded in norm and uniformly continuous.

\end{enumerate}
\end{lemma}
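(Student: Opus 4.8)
The plan is to build the matrix $M$ explicitly out of the gradient and Hessian of $\objfun$ at $\base$, to read off the normal form~\eqref{eqn-objtransform} from the second-order Taylor expansion, and then to harvest the uniformity assertions from continuity and compactness. Throughout, I use that $\objfun\in\calF$ is strictly convex with bounded level sets, so that its minimizer is its only critical point.

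First, fix $\base$ not equal to the minimizer, so $\fgrad\definedas\nabla\objfun(\base)\neq\mathbf{0}$; set $H\definedas\nabla^{2}\objfun(\base)$, which is positive definite. Substituting $\bp=M\bptrans+\base$ into the exact $C^{2}$ Taylor identity $\objfun(\bp)=\objfun(\base)+\fgrad^{T}(\bp-\base)+\half(\bp-\base)^{T}H(\bp-\base)+E(\bp)$ turns the linear part into $(\fgrad^{T}M)\bptrans$ and the quadratic part into $\half\bptrans^{T}(M^{T}HM)\bptrans$. Hence~\eqref{eqn-objtransform}--\eqref{eqn-remaindef} hold, with $\remain=\half\alpha\ytrans^{2}+E$, exactly when $\fgrad^{T}M=(0,\,1)$ and $M^{T}HM=\diag(1,\alpha)$ for some scalar $\alpha$, together with $\det M>0$; positivity of $\alpha$ is then automatic because $M^{T}HM$ is positive definite. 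Writing $M=[\,\bm_{1}\mid\bm_{2}\,]$, these conditions say: $\bm_{1}\perp\fgrad$ with $\bm_{1}^{T}H\bm_{1}=1$; and $\bm_{1}^{T}H\bm_{2}=0$ with $\fgrad^{T}\bm_{2}=1$. The line orthogonal to $\fgrad$ carries exactly two vectors $\bm_{1}$ (negatives of one another) with $\bm_{1}^{T}H\bm_{1}=1$, since $H$ is positive definite; once $\bm_{1}$ is chosen, $\bm_{2}$ is the unique solution of the $2\times2$ system with rows $(H\bm_{1})^{T}$ and $\fgrad^{T}$, whose coefficient matrix is invertible because $H\bm_{1}$ and $\fgrad$ are independent (if $H\bm_{1}=c\,\fgrad$ then $1=\bm_{1}^{T}H\bm_{1}=c\,\bm_{1}^{T}\fgrad=0$). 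Since $\bm_{2}$ is unchanged when $\bm_{1}$ is negated, the two candidate matrices have opposite and nonzero determinants (nonzero, since $\det M=0$ would force $\bm_{2}=c\,\bm_{1}$, hence $1=\fgrad^{T}\bm_{2}=0$), so exactly one has $\det M>0$. This gives part~\ref{I:existence of M}, the remainder estimate $E(\bp)=o(|\bp-\base|^{2})$ being the standard $C^{2}$ Taylor bound, which becomes $o(\max(|\xtrans|^{2},|\ytrans|^{2}))$ once we know $|\bp-\base|$ and $|\bptrans|$ are comparable.

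For part~\ref{I:uniformly continuous} I would note that $\base\mapsto(\fgrad,H)$ is continuous, and on a compact set $K$ missing the minimizer, $|\fgrad|$ is bounded below and the eigenvalues of $H$ lie between positive constants. Running the construction above with a fixed orientation convention, $\bm_{1}$, then $\bm_{2}$, then the sign correction making $\det M>0$, are each continuous in $(\fgrad,H)$ on $K$ (the sign is locally constant because $\det M$ never vanishes), so $M$ is continuous on $K$, hence bounded and uniformly continuous there. Since $\det M$ is continuous and positive on the compact set $K$ it is bounded below, so the entries of $M^{-1}$ --- cofactors of $M$ divided by $\det M$ --- are likewise bounded and uniformly continuous. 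In particular $|\bp-\base|=|M\bptrans|$ and $|\bptrans|=|M^{-1}(\bp-\base)|$ are comparable with constants uniform over $K$, which is the fact used just above and the one needed next.

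Finally, part~\ref{I:function r} follows by differentiating: from $\remain=\half\alpha\ytrans^{2}+E$ we get $d\remain/d\xtrans=dE/d\xtrans$ and $d\remain/d\ytrans=\alpha\ytrans+dE/d\ytrans$, while the chain rule gives $\nabla_{\bptrans}E=M^{T}\nabla_{\bp}E$ with $\nabla_{\bp}E(\bp)=\nabla\objfun(\bp)-\nabla\objfun(\base)-H(\bp-\base)$. Writing $\nabla\objfun(\bp)-\nabla\objfun(\base)=\int_{0}^{1}\nabla^{2}\objfun(\base+t(\bp-\base))(\bp-\base)\,dt$ bounds $|\nabla_{\bp}E(\bp)|$ by $|\bp-\base|\sup_{t\in[0,1]}\|\nabla^{2}\objfun(\base+t(\bp-\base))-H\|$, which is $o(|\bp-\base|)$, uniformly for $\base\in K$ by uniform continuity of $\nabla^{2}\objfun$ on a compact neighborhood of $K$. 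Multiplying by the bounded $M^{T}$ and converting $|\bp-\base|$ to $\max(|\xtrans|,|\ytrans|)$ gives $dE/d\xtrans,\,dE/d\ytrans=o(\max(|\xtrans|,|\ytrans|))$ uniformly, and adding $\alpha\ytrans=O(|\ytrans|)$ yields the two stated bounds. I expect the main difficulty to be not any isolated step but the bookkeeping of uniformity: one must prove the boundedness of $\|M\|$ and $\|M^{-1}\|$ (part~\ref{I:uniformly continuous}) before part~\ref{I:function r}, so that remainders naturally controlled in the $\bp$-coordinates transfer to the $\base$-dependent $(\xtrans,\ytrans)$-frame with constants independent of $\base$.
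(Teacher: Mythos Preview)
Your proof is correct and follows essentially the same route as the paper: set up the column conditions $\fgrad^{T}\bm_{1}=0$, $\bm_{1}^{T}H\bm_{1}=1$, $\bm_{1}^{T}H\bm_{2}=0$, $\fgrad^{T}\bm_{2}=1$, observe that $\bm_{1}$ is determined up to sign and $\bm_{2}$ uniquely, and fix the sign by $\det M>0$; the paper parametrizes $\bm_{2}$ explicitly as a multiple of $H^{-1}\fgrad$ rather than solving your $2\times2$ system, but the two constructions coincide. Your treatment of parts~\ref{I:function r} and~\ref{I:uniformly continuous} is in fact more detailed than the paper's, which dispatches both in a single sentence appealing to continuity of $\fgrad$ and $H$.
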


\begin{proof}
Let $\fgrad = \nabla f(\base)$ and $H = \nabla^2 f(\base)$
denote, respectively, the gradient and Hessian matrix of
$\objfun$ at $\base$.  
Since $\objfun$ is strictly convex, its gradient can vanish
only at the unique minimizer, so $\fgrad \ne 0$.
Because $f$ is twice-continuously
differentiable, we can expand it in Taylor series
around $\base$:
\begin{align}
\label{eqn-taylorexp}
\objfun(\bp) &= \objfun(\base)  +
     \fgrad^T (\bp-\base) +
     \half (\bp-\base)^T H (\bp-\base) + o(\norm{\bp-\base}^2) \\
\label{eqn-taylorexp-trans}
	&= \objfun(\base)  +
    \fgrad^T M \bptrans +
      \half \bptrans^T M^T H M \bptrans + o(\norm{\bp-\base}^2).
\end{align}
The Taylor expansion \eqref{eqn-taylorexp-trans} has the desired form if
\[
    \fgrad\T M   = (0\;\;1)\quad\hbox{and}\quad
      M\T H M  =  \mtx{cc}{1 & 0 \\
                                  0 & \alpha}.
\]
for some $\alpha>0$.
In terms of the columns $\bm_1$ and $\bm_2$ of $M$, these conditions say
\[
	\fgrad^T \bm_1 = 0, \quad 
	\fgrad^T \bm_2 = 1, \quad 
	\bm_1^T H \bm_1 = 1, \quad 
	\bm_1^T H \bm_2 = 0,
\]
and then we may set $\alpha \definedas \bm_2^T H \bm_2$,
which will be positive since $H$ is positive definite
and since the conditions above force $\bm_2$ to be nonzero.

Since $\fgrad \ne 0$,
the condition $\fgrad^T \bm_1=0$ says that $\bm_1$ is a multiple of
the vector $\fgradhat$ obtained by rotating $\fgrad$ by $90^\circ$ clockwise:
$\bm_1=\xi_1 \fgradhat$ for some $\xi_1$.
The condition $\bm_1^T H \bm_1 = 1$ implies that $\bm_1 \ne \mathbf{0}$.
The condition $\bm_1^T H \bm_2 = 0$ says that $H\bm_2$ is a multiple
of $\fgrad$.
Since $H$ is positive definite, $H$ is nonsingular,
so the equation $H \boldw = \fgrad$ 
has the unique solution $\boldw = H^{-1} \fgrad$, 
and then $\bm_2 =\xi_2 \boldw$ for some $\xi_2$.
The normalizations $\fgrad\T \bm_2 = 1$ and $\bm_1^T H \bm_1\drop = 1$
are equivalent to
\begin{equation}
    \xi_2 = \frac{1}{\fgrad^T \boldw} = \frac{1}{\boldw^T H \boldw}
	\quad\hbox{and}\quad
    \xi_1^2 = \frac{1}{\fgradhat^T H \fgradhat};
\label{eqn-xidef}
\end{equation}
the denominators are positive since $H$ is positive definite
and $\boldw$ and $\fgradhat$ are nonzero.
These conditions
determine $M$ uniquely up to the choice of sign of its first column,
i.e., the sign of $\xi_1$,
but we have not yet imposed the condition $\det M>0$.
We claim that it is the positive choice of $\xi_1$ that makes $\det M>0$:
since $\bm_1$ and $\bm_2$ are then positive multiples of $\fgradhat$
and $\boldw$, respectively, the condition $\det M>0$ is equivalent to
$\fgrad^T \boldw > 0$, or equivalently, $\boldw^T H^T \boldw>0$,
which is true since the matrix $H^T=H$ is positive definite.
This proves~\eqref{I:existence of M}.

Since $\objfun$ is twice-continuously differentiable,
$\fgrad$ and $H$ vary continuously as $\base$ varies within
a compact set not containing the minimizer of $\objfun$.
Hence $M$ and $M^{-1}$ vary continuously as well.
This proves \eqref{I:function r} and \eqref{I:uniformly continuous}.
\end{proof}

%
%
\begin{remark}
If $H$ is positive semidefinite and singular,
then the equation $H\boldw=\fgrad$ continues
to have a solution provided that $\fgrad \in \range(H)$,
as in the McKinnon example~\eqref{eqn-mckinsamp}.  
But in this case, $H\fgradhat = 0$,
so $H\bm_1 = 0$, which contradicts $\bm_1^T H \bm_1\drop = 1$,
and no matrix $M$ exists.
\end{remark}

%
%
\begin{remark}
As $\base$ approaches the minimizer of $\objfun$,
we have $\fgrad \to \mathbf{0}$,
and the formulas obtained in the proof of Lemma~\ref{lem-affdef}
show that $\bm_1$ remains bounded 
while $\bm_2$ and the value of $\alpha$ ``blow up'', 
so $M$ becomes unbounded in norm with an increasing condition number.
\end{remark}

The local coordinate frame defined in Lemma~\ref{lem-affdef}
depends on the base point $\base$, the gradient vector $\fgrad$,
and the Hessian matrix $H$.
In the rest of this section, we use
${\coordframe}(\base)$ (with a nonminimizing point $\base$ as argument)
to denote the local coordinate frame with base point $\base$. In
the context of a sequence of RNM iterations,
$\coordframe_k$ (or $\coordframe(\triang_k)$, with a subscripted
RNM triangle as argument) will mean the coordinate
frame defined with a specified base point in RNM triangle
$\triang_k$.

%
%

\subsubsection{Width, height, area, and flatness.}
This section collects some results about transformed
RNM triangles.

%
%

\begin{definition}[Width, height, and flatness.]
\label{def-flatness}
Let $\objfun \in \calF$, and
let $\triang$ denote a nondegenerate triangle that lies in a
compact set $\Qscr$ not containing
the minimizer of $\objfun$.  Assume that we are given
a base point $\base$ in $\Qscr$, along with the
coordinate frame defined at $\base$ as in Lemma~{\rm\ref{lem-affdef}}.
\begin{itemize}
\item The (transformed) {\sl width\/} of $\triang$, denoted by
$\wtrans(\triang)$, is the maximum absolute value of the difference in
$\xtrans$-coordinates of two vertices of $\triang$;
\item
The (transformed) {\sl height\/}, denoted by
$\htrans(\triang)$,
is the maximum absolute value of the
difference of $\ytrans$-coordinates of two vertices of $\triang$;
\item The {\sl flatness\/} of $\triang$, denoted by
$\flatness(\triang)$, is
\begin{equation}
     \flatness(\triang) \definedas \frac{\Atrans(\triang)}{\wtrans(\triang)^3},
\label{eqn-flatnessdef}
\end{equation}
where $\Atrans(\triang)$ is the (positive)
area of $\triang$ measured in the transformed coordinates.
\end{itemize}
The argument $\triang$ may be omitted when it is obvious.
\end{definition}

%
%

\begin{lemma}[Effects of a reflection]
\label{lem-refeffects}
The (transformed) height and width of an RNM triangle
are the same as those of its reflection,
if the same base point is used to define the local coordinate frame
for both triangles.
\end{lemma}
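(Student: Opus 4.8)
The plan is to exploit the fact that the reflection point formula in two dimensions, $\bp\subref = \bp_1 + \bp_2 - \bp_3$ from~\eqref{eqn-twodref}, is affine: the reflected triangle is obtained from the original one by an affine map of the plane (indeed a point reflection through $\bar\bp$, as noted in the earlier remark). Since the coordinate change~\eqref{eqn-affinet}, $\bptrans = M\inv(\bp - \base)$, is itself affine and is the \emph{same} map for both triangles (the base point $\base$ is held fixed by hypothesis), the images of the two triangles in the transformed $(\xtrans,\ytrans)$-coordinates differ by an affine map as well.

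First I would compute explicitly, in the transformed coordinates, what that affine map is. Writing $\bptrans_i$ for the transformed vertices, the transformed reflection point is $\bptrans\subref = M\inv(\bp\subref - \base) = M\inv(\bp_1 + \bp_2 - \bp_3 - \base) = \bptrans_1 + \bptrans_2 - \bptrans_3$, using linearity of $M\inv$ and the fact that $\bp_1 + \bp_2 - \bp_3 - \base = (\bp_1 - \base) + (\bp_2 - \base) - (\bp_3 - \base)$. So the reflected triangle, in transformed coordinates, has vertices $\bptrans_1$, $\bptrans_2$, $\bptrans_1 + \bptrans_2 - \bptrans_3$. Equivalently it is the image of the original triangle $\{\bptrans_1, \bptrans_2, \bptrans_3\}$ under the point reflection $\btriang \mapsto \bptrans_1 + \bptrans_2 - \btriang$ through the midpoint $\tfrac12(\bptrans_1 + \bptrans_2)$.

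Then I would observe that width and height, as defined in Definition~\ref{def-flatness}, are invariant under this map. The width is the maximum over pairs of vertices of $|\xtrans_i - \xtrans_j|$; under the point reflection $\btriang \mapsto \bptrans_1 + \bptrans_2 - \btriang$, the difference of $\xtrans$-coordinates of two vertices changes only by a sign, so the set of such absolute differences is unchanged, hence so is its maximum. The identical argument applies to the $\ytrans$-coordinates and the height. This is really the whole content of the lemma, so there is no genuine obstacle here — the only thing to be careful about is making sure the base point is the same for both coordinate frames (which is exactly the hypothesis), so that the transformation $M\inv(\cdot - \base)$ is literally the same affine map in both cases; if the base point were allowed to change, $M$ and $\base$ would change and the conclusion could fail.

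I would also note, for use downstream, that although the lemma as stated only claims invariance of height and width, the same reasoning shows the transformed \emph{area} $\Atrans$ is preserved (a point reflection has determinant $+1$, or more simply it is an isometry of $\real^2$), and therefore by~\eqref{eqn-flatnessdef} the flatness $\flatness = \Atrans/\wtrans^3$ is preserved as well — this is the assertion already anticipated in Section~\ref{sec-bigpic} that $\flatness_{k+1} = \flatness_k$ across a reflection step when the coordinate frame is retained. I might add this as a one-sentence remark following the proof rather than enlarging the lemma statement.
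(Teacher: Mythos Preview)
Your proof is correct and is exactly the paper's approach, just spelled out in more detail: the paper's entire proof is the single sentence ``The new triangle is a $180^\circ$ rotation of the old triangle,'' which is precisely the point reflection through $\tfrac12(\bptrans_1+\bptrans_2)$ that you compute explicitly. Your added remark that area and hence flatness are also preserved is correct and is indeed used later in the paper (e.g., in Section~\ref{sec-bigpic} and in the proof of Theorem~\ref{thm-converges}).
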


\begin{proof}
The new triangle is a $180^\circ$ rotation of the old triangle.
\end{proof}

The next lemma bounds the change in three quantities
arising from small changes in the base point used
for the local coordinate frames.
In~\eqref{I:closebase flatness},
we need a hypothesis on the width and height
since for a tall thin triangle, a slight rotation
can affect its flatness dramatically.

%
%

\begin{lemma}
[Consequences of close base points.]
\label{lem-closebase}
Assume that $\objfun \in \calF$ and that 
$\Qscr$ is a compact set that does not contain the minimizer
of $\objfun$.  Let $\base_1$ and $\base_2$ denote two points in $\Qscr$,
and $\triang$ denote an RNM triangle contained in $\Qscr$.
For $i\in \{1,2\}$,
let $\wtrans_i$, $\htrans_i$, and $\flatness_i$
be the transformed width, height, and flatness of $\triang$
measured in the local coordinate frame
$\coordframe(\base_i)$ associated with $\base_i$,
and let $M_i$ be the matrix of Lemma~\ref{lem-affdef}
associated with $\coordframe(\base_i)$.
\begin{enumerate}[\upshape (i)]
\item\label{I:closebase matrix}
Given $\epsilon > 0$, there exists $\delta > 0$ 
(independent of $\base_1$ and $\base_2$) such that
if $\norm{\base_2 - \base_1} < \delta$, then
\[
    \norm{M_2\drop M\inv_1 - I} < \epsilon.
\]
\item\label{I:closebase area}
Given $\epsilon > 0$, there exists $\delta>0$ 
(independent of $\base_1$, $\base_2$, and $\triang$)
such that
if $\norm{\base_1 - \base_2} < \delta$, then
\begin{equation}
\label{eqn-arearel}
    (1-\epsilon)\Atrans_1 < \Atrans_2 <  (1+\epsilon)\Atrans_1.
\end{equation}
\item\label{I:closebase flatness}
Given $C,\epsilon > 0$, there is $\delta > 0$ 
(independent of $\base_1$, $\base_2$, and $\triang$)
such that if $\norm{\base_1 - \base_2} < \delta$
and $\wtrans_1 > C\htrans_1$, then
\begin{equation}
(1-\epsilon) \flatness_1 < \flatness_2 < (1+\epsilon)\flatness_1.
\label{eqn-flatrel}
\end{equation}
\end{enumerate}
\end{lemma}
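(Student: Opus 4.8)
The plan is to reduce all three parts to a single input: by Lemma~\ref{lem-affdef}\eqref{I:uniformly continuous}, on the compact set $\Qscr$ the matrices $M$ and $M\inv$ are bounded in norm, say by a constant $K$, and the map $\base\mapsto M(\base)$ is uniformly continuous. For \eqref{I:closebase matrix} I would simply write $M_2 M_1\inv - I = (M_2-M_1)M_1\inv$ and estimate $\norm{M_2 M_1\inv - I}\le\norm{M_2-M_1}\,\norm{M_1\inv}\le K\norm{M_2-M_1}$; uniform continuity then produces a $\delta$ depending only on $\epsilon$ (and on $\objfun,\Qscr$), as required. The same $\delta$ also controls $\norm{M_1 M_2\inv - I}$ and $\norm{M_2\inv M_1 - I}$, since the argument is symmetric in $\base_1,\base_2$ and insensitive to which side $M\inv$ appears on; I will use these variants below.

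For \eqref{I:closebase area}, the key observation is that the affine map $\bp\mapsto M_i\inv(\bp-\base_i)$ defining $\coordframe(\base_i)$ has linear part $M_i\inv$, so it scales every area by $\det M_i\inv=(\det M_i)\inv>0$. Hence $\Atrans_i(\triang)=(\det M_i)\inv A(\triang)$, where $A(\triang)>0$ is the untransformed area, and the $\triang$-dependence cancels in the ratio: $\Atrans_2/\Atrans_1=\det M_1/\det M_2=\det(M_1 M_2\inv)$. Writing $M_1 M_2\inv=I+E$ with $\norm{E}$ small by \eqref{I:closebase matrix}, and using $\det(I+E)=1+\operatorname{tr}(E)+\det(E)$ in dimension two, I get $|\Atrans_2/\Atrans_1-1|\le 2\norm{E}+\norm{E}^2$, which is $<\epsilon$ once $\delta$ is small. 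This $\delta$ depends only on $\epsilon$, never on $\triang$ or the base points, exactly because the ratio kills the $\triang$-dependence.

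Part \eqref{I:closebase flatness} is the real content, and is where the hypothesis $\wtrans_1>C\htrans_1$ is indispensable. Set $N\definedas M_2\inv M_1$. Since a point with $\coordframe(\base_1)$-coordinates $\bptrans$ has $\coordframe(\base_2)$-coordinates $N\bptrans+M_2\inv(\base_1-\base_2)$, the difference of the $\coordframe(\base_2)$-coordinate vectors of any two vertices of $\triang$ is $N$ applied to the difference of their $\coordframe(\base_1)$-coordinate vectors (the translation cancels). Write $N=I+E$ with $\norm{E}<\epsilon'$ to be chosen, using \eqref{I:closebase matrix}. Every vertex-difference vector $\bv=(v_x,v_y)^T$ of $\triang$ in $\coordframe(\base_1)$ has $|v_x|\le\wtrans_1$ and $|v_y|\le\htrans_1<\wtrans_1/C$, hence $\norm{\bv}\le\wtrans_1\sqrt{1+C^{-2}}$ and $|(N\bv)_x-v_x|\le\norm{E\bv}\le\epsilon'\sqrt{1+C^{-2}}\,\wtrans_1$. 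Maximizing over the three vertex pairs for the upper bound, and applying the estimate to a pair attaining $\wtrans_1$ for the lower bound, gives $(1-\epsilon'\sqrt{1+C^{-2}})\wtrans_1\le\wtrans_2\le(1+\epsilon'\sqrt{1+C^{-2}})\wtrans_1$. Since $\flatness_2/\flatness_1=(\Atrans_2/\Atrans_1)(\wtrans_1/\wtrans_2)^3$, combining this with part \eqref{I:closebase area} and choosing $\epsilon'$ (hence, via \eqref{I:closebase matrix}, $\delta$) small enough in terms of the fixed constant $C$ and $\epsilon$ yields \eqref{eqn-flatrel}, with $\delta$ uniform in $\triang$ and in the base points.

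The main obstacle is precisely the width ratio in \eqref{I:closebase flatness}: width is a direction-sensitive quantity, and for a tall thin triangle a vertex-difference vector can have $|v_y|$ vastly larger than $\wtrans_1$, so that even a minuscule $E$ can multiply $\wtrans_2$, and therefore $\flatness_2$, by a huge factor. The hypothesis $\wtrans_1>C\htrans_1$ is exactly what confines every relevant vector to a fixed multiple of $\wtrans_1$ and kills this effect; parts \eqref{I:closebase matrix} and \eqref{I:closebase area} are essentially routine by comparison.
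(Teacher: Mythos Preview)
Your proof is correct and follows essentially the same approach as the paper's: both reduce everything to the smallness of $M_2 M_1^{-1}-I$ (and its variants) via the uniform continuity and boundedness from Lemma~\ref{lem-affdef}\eqref{I:uniformly continuous}, deduce the area ratio from the determinant of the transition matrix, and then bound the change in width by applying $N=M_2^{-1}M_1=I+E$ to vertex-difference vectors whose length is $O(\wtrans_1)$ thanks to $\wtrans_1>C\htrans_1$. The only differences are cosmetic---you factor $(M_2-M_1)M_1^{-1}$ where the paper factors $M_2(M_1^{-1}-M_2^{-1})$, and you use the explicit $2\times 2$ formula $\det(I+E)=1+\operatorname{tr} E+\det E$ where the paper simply invokes continuity of $\det$---but the structure is identical.
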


\begin{proof}\hfill
\begin{enumerate}[\upshape (i)]
\item
We have
\[
    \norm{M_2\drop M\inv_1 - I} 
	= \norm{M_2 (M\inv_1 - M\inv_2)}
	\le
    \norm{M_2\drop}\;
    \norm{M\inv_1 - M\inv_2}.
\]
By Lemma~\ref{lem-affdef}\eqref{I:uniformly continuous},
the first factor $\norm{M_2\drop}$ is uniformly bounded,
and $M^{-1}$ is uniformly continuous as a function of $\base \in \Qscr$,
so the second factor $\norm{M\inv_1 - M\inv_2}$ 
can be made as small as desired
by requiring $\norm{\base_2-\base_1}$ to be small.
\item
Letting $\bptrans_2$ and $\bptrans_1$ denote the transformed
versions of a point $\bp$ in $\Qscr$ using $\coordframe(\base_1)$ and
$\coordframe(\base_2)$, we have
\begin{equation}
\label{eqn-bptransrel}
    \bptrans_2 = M_2\inv M_1\drop \bptrans_1 + M_2\inv(\bb_1 - \bb_2),
\end{equation}
so that $\bptrans_2$ and $\bptrans_1$ are
related by an affine transformation with matrix $M_2\inv M_1\drop$.
When an affine transformation with nonsingular matrix $B$
is applied to the vertices of a triangle, the area of the transformed
triangle is equal to the area of the original triangle
multiplied by $|\det(B)|$ \cite[page 144]{Klein}.
Applying this result to $\triang$ gives
\begin{equation}
\label{eqn-arearell}
    {\Atrans_2} = \Atrans_1\;|\det (M_2\inv M_1\drop)|.
\end{equation}
Since $|\det B|$ is a continuous function of $B$,
the result follows from \eqref{I:closebase matrix}.

\item
Because of~\eqref{I:closebase area},
it suffices to prove the analogous inequalities for width
instead of flatness.  Fixing 
two vertices of $\Delta$, we
let $\boldv_i$ denote the vector from one to the other
measured in $\coordframe(\base_i)$, and let $x(\boldv_i)$ denote
the corresponding $x$-component.
Then $|\boldv_i| \le \wtrans_1 + \htrans_1 = O(\wtrans_1)$,
since $\wtrans_1 > C \htrans_1$.
By \eqref{eqn-bptransrel}, $\boldv_2=M_2^{-1} M_1 \boldv_1$, 
so 
\[
	|x(\boldv_2)-x(\boldv_1)|
	\le |\boldv_2-\boldv_1| 
	= |(M_2^{-1} M_1 - I) \boldv_1| 
	= O(\norm{M_2^{-1} M_1-I} \cdot |\wtrans_1|).
\]
This bounds the change in $x$-component of each vector of the triangle
in passing from $\coordframe(\base_1)$ to $\coordframe(\base_2)$, and
it follows that
\[
	|\wtrans_2-\wtrans_1| = O(\norm{M_2^{-1} M_1-I} \cdot |\wtrans_1|).
\]
Finally, by~\eqref{I:closebase matrix}, $\norm{M_2^{-1} M_1-I}$
can be made arbitrarily small.\qedhere
\end{enumerate}
\end{proof}

%
%

\subsection{The contradiction hypothesis and the limiting level set}
\label{sec-levelset}

Our proof of Theorem~\ref{thm-rnmconv} is by contradiction.
Therefore we assume the following hypothesis 
for the rest of Section~\ref{sec-convproof}
and hope to obtain a contradiction.

%
\begin{hypothesis}\label{hyp1}
Assume that the RNM algorithm is applied to $\objfun \in \calF$
and a nondegenerate initial triangle,
and that it does not converge to the minimizer of $\objfun$.
\end{hypothesis}

We begin with a few easy consequences of Hypothesis~1.
Let $\triang_k$ be the RNM triangle at the start of the $k^{\textup{th}}$
iteration.
Let $\triangtrans_k$ be that triangle in the coordinate frame
determined by any one of its vertices, and
define its width $\wtrans_k$, height $\htrans_k$, and flatness $\Gamma_k$
as in Definition~\ref{def-flatness}.
%
\begin{lemma}
\label{L:consequences of Hypothesis 1}
Assume Hypothesis~1.
Then:
\begin{enumerate}[\upshape (a)]
\item\label{I:diameter tends to 0}
The diameter of $\triang_k$ tends to $0$.
\item\label{I:at least one limit point}
The RNM triangles have at least one limit point $\bplimit$.
\item\label{I:function values tend to limit}
The function values at the vertices of $\triang_k$ are
greater than or equal to $\objfun(\bplimit)$,
and they tend to $\objfun(\bplimit)$.
\item\label{I:action} 
If $\Qscr$ is a neighborhood of the level set of $\bplimit$,
then all the action of the algorithm is eventually inside $\Qscr$.
\item\label{I:eigenvalues of Hessian}
We may choose $\Qscr$ to be a compact neighborhood not containing
the minimizer of $\objfun$; 
then there is a positive lower bound on the smallest eigenvalue 
of the Hessian in $\Qscr$.
\item\label{I:transformed diameter}
The diameter of $\triangtrans_k$ tends to zero.
\item\label{I:transformed width and height}
We have $\wtrans_k \to 0$ and $\htrans_k \to 0$.
\end{enumerate}
\end{lemma}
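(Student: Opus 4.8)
The plan is to establish the seven items roughly in order, using only Lemma~\ref{lem-diamtozero}, Lemma~\ref{lem-affdef}, compactness, and the explicit move formulas~\eqref{eqn-twodref}--\eqref{eqn-twodinc}; the single place where Hypothesis~1 is really used is a short Bolzano--Weierstrass argument ruling out the minimizer as a limit point. First I would record two monotonicity facts. Since $\objfun$ is strictly convex no shrink step occurs (Lemma~3.5 of \cite{LRWW98}), so at each iteration the worst vertex $\bp_{n+1}$ is replaced by an accepted point whose value is $<f_{n+1}$ (inspect the reflect and contract tests in Steps~2--3). Hence $\objfun(\bp_{n+1}^{(k)})$ is non-increasing, so every vertex stays forever in the compact level set $\{\objfun\le\objfun(\bp_{n+1}^{(1)})\}$; and $\objfun(\bp_1^{(k)})$ is non-increasing and bounded below by $\min\objfun$, hence converges to some $f^\ast$. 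Lemma~\ref{lem-diamtozero} gives $\mathrm{diam}(\triang_k)\to 0$, which is~\eqref{I:diameter tends to 0}, and uniform continuity of $\objfun$ on that compact level set then forces $\objfun(\bp_{n+1}^{(k)})-\objfun(\bp_1^{(k)})\to 0$, so \emph{every} vertex value converges to $f^\ast$. The vertices lie in a compact set, so extracting a convergent subsequence of $\bp_1^{(k)}$ and using $\mathrm{diam}\to 0$ to carry the other two vertices along produces a limit point $\bplimit$ of the triangles with $\objfun(\bplimit)=f^\ast$, proving~\eqref{I:at least one limit point}; since each vertex value is $\ge\objfun(\bp_1^{(k)})\ge f^\ast$ and all converge to $f^\ast=\objfun(\bplimit)$, we obtain~\eqref{I:function values tend to limit}.

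Next I would show $f^\ast>\min\objfun$, the only use of Hypothesis~1. If instead $f^\ast=\min\objfun$, then every subsequential limit $\bq$ of a vertex sequence satisfies $\objfun(\bq)=\min\objfun$, hence equals the (unique, by strict convexity) minimizer; a bounded sequence all of whose subsequential limits coincide converges, so each vertex sequence converges to the minimizer, contradicting Hypothesis~1. Thus the level set $\calL:=\{\objfun=f^\ast\}$ does not contain the minimizer. The same subsequential-limit argument shows $\mathrm{dist}(\bp_i^{(k)},\calL)\to 0$ for each $i$, and since the reflection and contraction points~\eqref{eqn-twodref}--\eqref{eqn-twodinc} lie within a bounded multiple of $\mathrm{diam}(\triang_k)$ of the vertices, every trial point also has distance to $\calL$ tending to $0$; as $\calL$ is compact, any neighborhood of it contains a uniform $\rho$-neighborhood, so all vertices, filled triangles and trial points eventually lie in it, which is~\eqref{I:action}. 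For~\eqref{I:eigenvalues of Hessian} I take $\Qscr:=\{\,|\objfun-f^\ast|\le\eta\,\}$ with $0<\eta<f^\ast-\min\objfun$: it is closed and contained in a bounded level set, hence compact; it is a neighborhood of $\calL$, since by continuity $\{|\objfun-f^\ast|<\eta\}$ is an open set containing $\calL$; and it omits the minimizer. On the compact set $\Qscr$ the map $\bp\mapsto\lambda_{\min}(\nabla^2\objfun(\bp))$ is continuous and everywhere positive (the Hessian is positive definite for $\objfun\in\calF$), hence bounded below by a positive constant.

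Finally, fix this $\Qscr$. By~\eqref{I:action}, for all large $k$ the triangle $\triang_k$, and in particular the vertex used as its base point, lies in $\Qscr$, so by Lemma~\ref{lem-affdef}\eqref{I:uniformly continuous} the matrices $M_k^{-1}$ appearing in the coordinate change~\eqref{eqn-affinet} for $\coordframe_k$ are uniformly bounded in norm, say by $B$. Since $\triangtrans_k$ is the image of $\triang_k$ under~\eqref{eqn-affinet}, we get $\mathrm{diam}(\triangtrans_k)\le B\,\mathrm{diam}(\triang_k)\to 0$, which is~\eqref{I:transformed diameter}; and then $\wtrans_k$ and $\htrans_k$, each being at most $\mathrm{diam}(\triangtrans_k)$, both tend to $0$, which is~\eqref{I:transformed width and height}. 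The only step requiring genuine care is the Bolzano--Weierstrass argument excluding $f^\ast=\min\objfun$ (and, more generally, pinning the limit points and trial points to $\calL$); everything else is bookkeeping with compactness and the explicit formulas.
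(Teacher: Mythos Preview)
Your proposal is correct and follows essentially the same approach as the paper's proof, which is a terse item-by-item verification relying on Lemma~\ref{lem-diamtozero}, the monotonicity facts from \cite{LRWW98}, compactness, and Lemma~\ref{lem-affdef}\eqref{I:uniformly continuous}. The one place where you are noticeably more careful than the paper is the Bolzano--Weierstrass argument showing $f^\ast>\min\objfun$: the paper's proof of~\eqref{I:eigenvalues of Hessian} simply asserts ``the minimizer is not on the level set of $\bplimit$'' without spelling out why Hypothesis~1 excludes the minimizer as a limit point, whereas you make this explicit. Your treatment of~\eqref{I:action} via $\mathrm{dist}(\cdot,\calL)\to 0$ is a mild variant of the paper's argument, which instead observes that the triangles are eventually trapped in $\objfun^{-1}(I)$ for a compact interval $I$ around $\objfun(\bplimit)$; both routes are straightforward.
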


\begin{proof}\hfill
\begin{enumerate}[\upshape (a)]
\item This follows from Lemma~\ref{lem-diamtozero}, even without Hypothesis~1.
\item Lemma~3.3 of \cite{LRWW98} states that the best,
next-worst, and worst function values in each
successive triangle cannot increase, and that at
least one of them must strictly decrease at each iteration.
Because level sets are bounded, compactness guarantees that there
is a limit point $\bplimit$.
\item This follows from the monotonic decrease in function values,
the shrinking of the diameter to zero, and the continuity of $\objfun$.
\item Since the level sets are compact, there is a compact neighborhood
$I$ of $f(\bplimit)$ such that $f^{-1}(I)$ is a compact set
contained in the interior of $\Qscr$.
By~\eqref{I:function values tend to limit}, the triangles are eventually
contained in $f^{-1}(I)$.
By~\eqref{I:diameter tends to 0},
eventually even the rejected points tested in each iteration
lie within $f^{-1}(I)$.
\item
The first statement follows since the minimizer is not on the 
level set of $\bplimit$.
The second statement follows from uniform continuity of the Hessian.
\item 
By Lemma~\ref{lem-affdef}\eqref{I:uniformly continuous},
the distortion of the triangles is uniformly bounded.
\item 
This follows from~\eqref{I:transformed diameter}.
\end{enumerate}
\end{proof}

{}For the rest of Section~\ref{sec-convproof}, 
we may assume that all our RNM triangles and test points 
lie in a compact set $\Qscr$ not containing the minimizer, as in 
Lemma~\ref{L:consequences of Hypothesis 1}\eqref{I:eigenvalues of Hessian}.
In particular, the implied bounds in Lemma~\ref{lem-affdef} are uniform.

%
%

\subsection{Flattening of the RNM triangles}
\label{sec-flattenout}

Under Hypothesis 1, we now show that
the transformed RNM triangles ``flatten out'' in the sense that the
height becomes arbitrarily small relative to the width.
The proof is again a proof by contradiction,
showing that, unless the
triangles flatten out, there must be a sequence of consecutive
reflections in which the value of $\objfun$ at the reflection point
is eventually less than $\objfun(\bplimit)$, contradicting 
Lemma~\ref{L:consequences of Hypothesis 1}\eqref{I:function values tend to limit}.

%
%

\begin{lemma}[Flattening of RNM triangles.]
\label{lem-rattozero}
Assume Hypothesis~1.
Then $\lim_{k \to \infty} \htrans_k/\wtrans_k = 0$.
\end{lemma}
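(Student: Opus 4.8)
The plan is to argue by contradiction. Suppose $\htrans_k/\wtrans_k \not\to 0$; then there is a constant $c>0$ and infinitely many indices $k$ with $\htrans_k \ge c\wtrans_k$. Starting from such a (large) index $k$, I would show that the algorithm is forced to perform a long run of consecutive reflections, and that along this run the value of $\objfun$ at the accepted reflection points marches strictly downward past $\objfun(\bplimit)$, contradicting Lemma~\ref{L:consequences of Hypothesis 1}\eqref{I:function values tend to limit}. Recall that shrinks never occur and that a reflection in two dimensions is a point reflection, hence produces a congruent triangle, so it preserves $\htrans$, $\wtrans$, and the ratio $\htrans/\wtrans$ (Lemma~\ref{lem-refeffects}); in particular, once the triangle fails to flatten, that failure persists as long as reflections are taken.

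The engine is a local computation in the coordinate frame $\coordframe(\bp_1^{(k)})$ based at the best vertex. By Lemma~\ref{lem-affdef} the triangle is small in this frame and $\objfun = \objfun(\bp_1^{(k)}) + \ytrans + \half\xtrans^2 + \remain$ with $\remain$ of size $O(\htrans_k^2)$ on the triangle, uniformly (using the compact set $\Qscr$ of Lemma~\ref{L:consequences of Hypothesis 1}\eqref{I:eigenvalues of Hessian}). Since $\bp_1^{(k)}$ sits at $\ytrans=0$ and is the $\objfun$-best vertex, while $\wtrans_k\le\htrans_k/c$, the $\half\xtrans^2+\remain$ terms are negligible next to $\htrans_k$; hence the worst vertex has $\ytrans$-coordinate $\gtrsim\htrans_k$, and from \eqref{eqn-twodref} one gets $\objfun(\bp\subref)-\objfun(\bp_2)=-\ytrans_3+O(\htrans_k^2)<0$, so a reflection is accepted at step $k$. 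The same estimate then applies at the next step (the ratio is unchanged), and the point reflections drive the maximum $\ytrans$-coordinate of the triangle — hence the vertex values — downward by $\gtrsim\htrans_k$ every two reflections. If the reflections never stop, the triangles stay congruent and the diameter does not go to $0$, contradicting Lemma~\ref{lem-diamtozero}; so the run is finite, but run long enough the value of $\objfun$ at the reflection point drops below $\objfun(\bplimit)$.

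The delicate part — the main obstacle — is controlling the Taylor error over the whole run. In the fixed frame $\coordframe(\bp_1^{(k)})$ the marching triangle drifts away from the base point, so $\remain$ grows with the march distance and one can reliably track only a bounded-but-growing number of reflections before losing control; re-centering the frame on a current vertex keeps the error of size $O(\htrans^2)$, but a frame change can badly distort the width of a very thin triangle (the phenomenon flagged before Lemma~\ref{lem-closebase}), and repeated re-centerings accumulate drift. The bookkeeping must show that the number of reliably trackable consecutive reflections nonetheless diverges as $k\to\infty$: this is exactly where the consequences of Hypothesis~1 enter — diameters and transformed widths and heights tend to $0$ (Lemma~\ref{L:consequences of Hypothesis 1}\eqref{I:transformed width and height}), and the action is confined to a fixed compact set avoiding the minimizer, where the estimates of Lemma~\ref{lem-affdef} are uniform and the Hessian is uniformly positive definite, so the error degrades slowly compared with the shrinkage and the inequality $\htrans_k\ge c\wtrans_k$ propagates (with a slightly smaller constant) through the reflections and their accompanying frame changes — so that the accumulated decrease in the vertex values eventually exceeds the vanishing gap $\objfun(\bp_1^{(k)})-\objfun(\bplimit)$. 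Producing a reflection point with value below $\objfun(\bplimit)$ then contradicts Lemma~\ref{L:consequences of Hypothesis 1}\eqref{I:function values tend to limit}, completing the proof.
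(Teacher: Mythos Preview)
Your strategy is the paper's: assuming $\htrans_k/\wtrans_k \ge \rho>0$ along a subsequence, show that from such a $k$ the algorithm is forced into a run of reflections driving a vertex value below $\objfun(\bplimit)$, contradicting Lemma~\ref{L:consequences of Hypothesis 1}\eqref{I:function values tend to limit}. The gap is that you flag the central difficulty---keeping the quadratic and remainder terms under control as the congruent triangle marches away from the base point---and then leave it unresolved. The last paragraph asserts that ``the error degrades slowly compared with the shrinkage'' and that the inequality $\htrans\ge c\wtrans$ ``propagates (with a slightly smaller constant),'' but supplies no mechanism. Concretely, in a fixed frame the marching triangle drifts in $\xtrans$ as well as $\ytrans$, and the difference $\half(\xtrans_i^2-\xtrans_j^2)$ between two vertices is of order $|\xtrans|\cdot\wtrans$; you must show this stays below $\htrans$ for the entire run (and likewise bound $\remain_i-\remain_j$), and you must verify at \emph{each} step of the march---not just the first, where your choice of base point at $\bp_1^{(k)}$ makes things easy---that the reflection is accepted and that the centroid drops in $\ytrans$ by a definite fraction of $\htrans$. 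Your re-centering alternative is only sketched, and as you yourself note it is dangerous for a tall-thin triangle.

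The paper handles all of this with one geometric device, working throughout in the single fixed frame $\coordframe_{k_0}$. It introduces a \emph{truncated sector} $\{\ytrans \le \epsilon - \rho|\xtrans|/10,\ \ytrans \ge -\epsilon\}$ for a small $\epsilon>0$ depending only on $\objfun$ and $\rho$ (not on $k_0$). The sector is bounded, so on it $|\xtrans|=O(\epsilon)$ and the quadratic and remainder contributions to $\objfun_i-\objfun_j$ are uniformly a small multiple of $\htrans$; its slanted sides cap the $\xtrans$-drift automatically, which is the bookkeeping your outline lacks. One then proves inductively: any triangle in the truncated sector with $\htrans/\wtrans>\rho$ must reflect, the centroid drops in $\ytrans$ by at least $88\htrans/300$, and the reflected triangle stays in the (untruncated) downward sector. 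When the triangle finally exits through the bottom edge $\ytrans=-\epsilon$, one has $\objfun\approx\objfun(\base)-\epsilon$ there, and taking $k_0$ large makes $\objfun(\base)$ close enough to $\objfun(\bplimit)$ to force the new vertex value below $\objfun(\bplimit)$. This trapping-region argument is the missing ingredient that turns your outline into a proof.
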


\begin{proof}
Assume that the result of the lemma does not hold.
In other words, within the rest of this proof,
the following hypothesis is assumed:

%
%

\begin{hypothesis}\label{hyp2}
There exists $\flatconst > 0$
such that for arbitrarily large $k$ 
we have $\htrans_k/\wtrans_k > \flatconst$.
\end{hypothesis}
We may assume also that $\bplimit$ is a limit point of the
triangles $\triang_k$ for which $\htrans_k/\wtrans_k > \flatconst$.

Given $\epsilon>0$, we
define a {\sl downward-pointing sector\/} of points $(\xtrans,\ytrans)$ 
satisfying $\ytrans \le  \epsilon -\flatconst|\xtrans|/10$,
and a {\sl truncated sector} of points in the downward sector
that also satisfy $\ytrans \ge -\epsilon$:
see Figure~\ref{fig-sectors}.

%
%
\begin{figure}[htb]
\epsfxsize = 3in
\epsfysize = 1.5in
$$\epsfbox{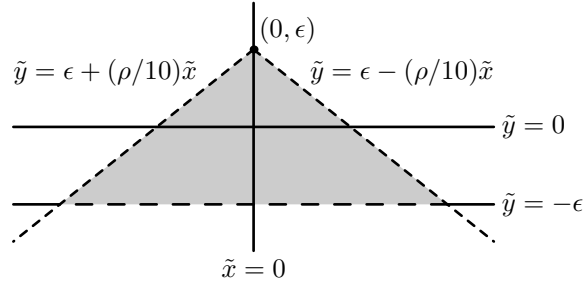}$$
\caption{{\small The downward-pointing sector lies between
the two finely dashed lines.  The truncated sector consists of
the shaded area, for $\flatconst = 8$ and $\epsilon = 0.5$.}}
\label{fig-sectors}
\end{figure}

We now show that there exists $\epsilon>0$ 
(depending on $\objfun$ and $\rho$)
such that, for any sufficiently advanced iteration $k_0$ 
for which $\htrans_{k_0}/\wtrans_{k_0} > \flatconst$,
\begin{enumerate}[\upshape (a)]
\item\label{I:initial triangle}
$\triangtrans_{k_0}$ is contained in the truncated sector.
\item\label{I:subsequent triangles}
If $\triangtrans$ 
is {\sl any} RNM triangle 
in the coordinates $(\xtrans,\ytrans)$ of $\coordframe_{k_0}$
such that
$\triangtrans$ is contained in the truncated sector
and has (transformed) width $\wtrans$ and height $\htrans$ 
satisfying $\htrans/\wtrans > \flatconst$,
then
\begin{enumerate}[\upshape (i)]
\item\label{I:it reflects} 
One RNM iteration reflects $\triangtrans$ 
to a new triangle $\triangtrans'$.
(And $\triangtrans'$ has the same width and height as $\triangtrans$, 
by Lemma~\ref{lem-refeffects}.)
\item\label{I:centroid drops} 
The $\ytrans$-coordinate of the centroid of $\triangtrans'$
is at least $88\htrans/300$ below that of $\triangtrans$.
\item\label{I:stays in sector}
$\triangtrans'$ is contained in the downward-pointing sector.
\item\label{I:function value}
If $\triangtrans'$ is not contained in the truncated sector,
then the function value at the new vertex is less than $\objfun(\bplimit)$.
\end{enumerate}
\end{enumerate}
Starting from~\eqref{I:initial triangle}, 
applying~\eqref{I:subsequent triangles} repeatedly
shows that the triangle in the $(\xtrans,\ytrans)$ coordinates
reflects downward until it exits the truncated sector through the bottom,
at which point the function value at the exiting vertex
is less than $\objfun(\bplimit)$,
which contradicts 
Lemma~\ref{L:consequences of Hypothesis 1}\eqref{I:function values tend to limit}.
Thus it remains to prove
\eqref{I:initial triangle} and~\eqref{I:subsequent triangles}.

\bigskip
\noindent
{\sl Proof of~\eqref{I:initial triangle}.\/}

By definition of $\coordframe_{k_0}$,
the point $(0,0)$ is a vertex of $\triangtrans_{k_0}$.
For any given $\epsilon>0$,
if $k_0$ is sufficiently large,
then 
Lemma~\ref{L:consequences of Hypothesis 1}\eqref{I:transformed diameter}
shows that the diameter of $\triangtrans_{k_0}$ is less than 
the distance from $(0,0)$ to the boundary of the truncated sector,
so $\triangtrans_{k_0}$ is entirely contained in the truncated sector.

\bigskip
\noindent
{\sl Proof of~\eqref{I:subsequent triangles}.\/}

Suppose that $\triangtrans$ is contained in the truncated sector
and satisfies $\htrans/\wtrans>\rho$.
Its vertices $\bptrans_i=(\xtrans_i,\ytrans_i)$ are the transforms 
of vertices $\bp_i$ of some $\triang$.
We will use the notation $\objfun_i=\objfun(\bp_i)$ for any subscript $i$,
and use similar abbreviations for other functions and coordinates.

We show first that 
the difference in $\objfun$ values at any two vertices $\bp_i$ and $\bp_j$ 
is within $3\htrans/100$ of the differences of their $\ytrans$-coordinates.
Using~\eqref{eqn-objtransform}, we find that
\begin{equation}
  \label{E:difference in f values}
    \objfun_i - \objfun_j  =  \ytrans_i - \ytrans_j
    + \half(\xtrans_i^2 - \xtrans_j^2)
    + \remain_i - \remain_j.
\end{equation}
The quantity $|\xtrans_i^2 - \xtrans_j^2|$ is bounded
by $2\wtrans|\xtrans_i| + \wtrans^2$.
If $\epsilon < \flatconst^2 /4000$,
then $|\xtrans| < \flatconst/200$ for any point in the truncated sector.
By Lemma~\eqref{L:consequences of Hypothesis 1}\eqref{I:transformed diameter},
if $k_0$ is large enough, then $\wtrans < \flatconst/100$.
It follows that
\begin{equation}
\label{eqn-xtrel}
\wtrans |\xtrans| < \frac{\wtrans\flatconst}{200} <
\frac{\htrans}{200}
\qquad\hbox{and}\qquad
\wtrans^2 < \frac{\flatconst\wtrans}{100} < \frac{\htrans}{100},
\end{equation}
so $|\xtrans_i^2 - \xtrans_j^2| < \htrans/50$.
On the other hand, $\remain_i - \remain_j$ is the line integral
of $(d\remain/d\xtrans,d\remain/d\ytrans)$
over a path of length at most $\wtrans+\htrans = O(\htrans)$.
Since $d\remain/d\xtrans$ and $d\remain/d\ytrans$ 
are $O(\max(|\xtrans|, |\ytrans|))$,
the derivatives can be made arbitrarily small on the truncated sector
by choosing $\epsilon$ small enough,
and we may assume that 
$|\remain_i - \remain_j| < \htrans/100$.
Now \eqref{E:difference in f values} yields
\begin{equation}
\objfun_i - \objfun_j = \ytrans_i - \ytrans_j + \zeta,
\quad\hbox{with}\quad
|\zeta| < \frac{3\htrans}{100}.
\label{eqn-funvalrel}
\end{equation}

\begin{enumerate}[(i)]
\item 
Let $\bp\subbest, \bp\subnext, \bp\subworst$ be the vertices of $\triang$ 
ordered so that 
$\objfun\subbest \le \objfun\subnext \le \objfun\subworst$.
Let $\bp\sublow, \bp\subymid, \bp\subhigh$ be the same vertices
ordered so that
$\ytrans\sublow \le \ytrans\subymid \le \ytrans\subhigh$.
Recall that the reflection point
$\bp\subref = \bp\subbest + \bp\subnext - \bp\subworst$
is accepted only if $\objfun\subref < \objfun\subnext$.
Equation~\eqref{eqn-funvalrel} implies that
$\ytrans\subbest$, $\ytrans\subnext$, $\ytrans\subworst$
are within $3\htrans/100$ of 
$\ytrans\sublow$, $\ytrans\subymid$, $\ytrans\subhigh$,
respectively.
Hence the difference 
\[
	\ytrans\subnext - \ytrans\subref 
	= \ytrans\subworst - \ytrans\subbest
\]
is within $6\htrans/100$ of $\ytrans\subhigh-\ytrans\sublow=\htrans$.
Applying \eqref{eqn-funvalrel} to the reflected triangle 
shows that $\objfun\subnext > \objfun\subref$, and
the reflection point is accepted.

\item
The reflection decreases 
the $\ytrans$ coordinate of the reflected vertex by
\[
	\ytrans\subworst-\ytrans\subref
	= 2\ytrans\subworst - \ytrans\subbest - \ytrans\subnext,
\]
which is within $4(3\htrans/100)$ of
\[
	2\ytrans\subhigh - \ytrans\sublow - \ytrans\subymid
	\ge \ytrans\subhigh - \ytrans\sublow = \htrans.
\]
Consequently, $\ytrans\subworst-\ytrans\subref \ge 88\htrans/100$,
and the centroid drops by at least $88\htrans/300$.

\item
Furthermore, 
$\xtrans\subref$ differs from $\xtrans\subworst$ by no more than
$2\wtrans$, i.e.,
$|\xtrans\subref| \le |\xtrans\subworst| + 2\wtrans$.
Since $\bp\subworst$ lies in the truncated sector
and $\flatconst\wtrans < \htrans$, it follows that
\begin{eqnarray*}
    \ytrans\subref + \frac{\flatconst}{10}|\xtrans\subref|
   &\; \le \;& \ytrans\subworst  - \frac{88\htrans}{100}
    + \frac{\flatconst}{10} (|\xtrans\subworst| + 2\wtrans) <
     \ytrans\subworst  - \frac{88\htrans}{100}
     + \frac{\flatconst}{10}|\xtrans\subworst|
     + \frac{2\htrans}{10} \\
    & \; < \; &  \ytrans\subworst +
\frac{\flatconst}{10} |\xtrans\subworst|
        \; < \; \epsilon.
\end{eqnarray*}
Thus, using the local coordinate frame $\coordframe_{k_0}$,
the reflection point $\bp\subref$
lies in the downward-pointing sector, and also lies in the
truncated sector as long as $\ytrans\subref \ge -\epsilon$.

\item
Let $\bb$ denote the base point of $\coordframe_{k_0}$, 
so $\trans\bb = (0,0)$.
For $\bptrans$ on the bottom edge of the truncated sector,
we have $\ytrans=-\epsilon$
and $\xtrans=O(\epsilon)$ as $\epsilon \to 0$ (similar triangles). 
Relation~\eqref{eqn-objtransform} then implies
\begin{equation}
\label{E:function along bottom edge}
	\objfun(\bp)  = \objfun(\bb) - \epsilon + O(\epsilon^2).
\end{equation}
Fixing $\epsilon$ to be small enough that $\objfun(\bp) - \objfun(\bb) < 0$ 
everywhere on the bottom edge,
we can also fix a neighborhood $U$ of the bottom edge
and a neighborhood $V$ of $\trans\bb = (0,0)$
such that $\objfun(\bp)<\objfun(\bb')$ holds 
whenever $\bptrans \in U$ and $\trans\bb'\in V$.

If $\triangtrans'$ is not in the truncated sector,
its new vertex $\bptrans\subref$ is within $\wtrans+\htrans$ 
of the bottom edge.
If $k_0$ is sufficiently large to make $\wtrans+\htrans$ small enough,
it follows that $\bptrans\subref \in U$.

By choice of $\bplimit$ (defined immediately following Hypothesis~2),
$k_0$ can be taken large enough that $\bplimit$ 
is arbitrarily close to $\bb$
in {\sl untransformed\/} coordinates.  
By Lemma~\ref{lem-affdef}\eqref{I:uniformly continuous},
the matrix defining the local coordinate transformation is bounded and
nonsingular.
Hence we can make $\bptranslimit$ arbitrarily close to $(0,0)$
in transformed coordinates, and in particular we can 
guarantee that $\bptranslimit$ lies in $V$.

Thus $\objfun(\bp\subref)<\objfun(\bplimit)$.\qedhere
\end{enumerate}
\end{proof}

%
%
\begin{remark}
\label{R:width greater than height}
An important consequence of Lemma~\ref{lem-rattozero}
is that $\wtrans>\htrans$ for $\triang_k$
measured in a coordinate frame associated to any one of its vertices,
so that Lemma~\ref{lem-closebase}\eqref{I:closebase flatness}
can be applied with $C=1$.
\end{remark}

%
%

\subsection{The distance travelled during a sequence of reflections}

We now show that a sequence of valid reflections, starting
from a sufficiently advanced iteration, does not move the triangle far.
This result limits the possible change in flatness caused by
moving the base point of the
local coordinate system from
the first to last triangle in the series of reflections.

%
%

\begin{lemma}
\label{lem-must-contract}
Assume Hypothesis~1.
Given $\movedist > 0$,
the following is true for any sufficiently large $k_0$ and any $k \ge k_0$:
if all steps taken by the RNM algorithm from $\triang_{k_0}$
to $\triang_k$ are reflections,
then the distance between the
transformed centroids of $\triang_{k_0}$ and $\triang_k$
is less than $\movedist$ (where we use a coordinate frame
whose base point is a vertex of $\triang_{k_0}$).
\end{lemma}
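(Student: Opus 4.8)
The plan is to exploit the key fact, established in Lemma~\ref{lem-refeffects}, that a reflection preserves the transformed width, height, and area of a triangle when the \emph{same} coordinate frame is used for both triangles. So fix the coordinate frame $\coordframe_{k_0}$ whose base point $\bb$ is a vertex of $\triang_{k_0}$, and work entirely in these $(\xtrans,\ytrans)$ coordinates. Under a sequence of reflections the triangle $\triangtrans_j$ (for $k_0 \le j \le k$, in these coordinates) has a constant width $\wtrans$, constant height $\htrans$, and constant area $\Atrans$. By Lemma~\ref{L:consequences of Hypothesis 1}\eqref{I:transformed diameter} and~\eqref{I:transformed width and height}, once $k_0$ is large enough these are all as small as we please; in particular the transformed \emph{diameter} of each $\triangtrans_j$ is tiny. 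A single reflect step moves the centroid of $\triangtrans_j$ by a distance that is $O(\text{diam }\triangtrans_j)$ --- in fact the centroid moves by $\frac23$ of the distance from the worst vertex to $\bar{\bp}$, which is bounded by the diameter --- so one reflection moves the centroid by at most, say, the transformed diameter $d$ of $\triangtrans_{k_0}$.

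The obstacle is that a \emph{single} reflection moving the centroid a small distance does not immediately bound the cumulative displacement over many reflections: in principle the centroid could drift steadily in one direction, accumulating an arbitrarily large total. The resolution is that each valid reflection strictly decreases the best function value (no shrinks occur, by the Remark following Lemma~\ref{lem-diamtozero}, and an accepted reflection gives $\objfun\subref < \objfun_n$), and --- crucially --- by~\eqref{eqn-funvalrel}-type estimates (the content of Lemma~\ref{lem-rattozero}'s proof, where $\objfun$-differences track $\ytrans$-differences up to $O(\htrans)$), a valid reflection must drop the $\ytrans$-coordinate of the reflected vertex by roughly the full height, hence drops the centroid's $\ytrans$-coordinate by a definite positive amount (on the order of $\htrans$). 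But the $\ytrans$-coordinates of the centroids are bounded below: all action stays in the compact set $\Qscr$ (Lemma~\ref{L:consequences of Hypothesis 1}\eqref{I:action}-\eqref{I:eigenvalues of Hessian}), so the transformed $\ytrans$-values lie in a bounded interval. Therefore the number $N$ of reflections between $\triang_{k_0}$ and $\triang_k$ is bounded: $N \cdot (\text{height-sized drop}) \le (\text{bounded range of }\ytrans)$, i.e.\ $N = O(1/\htrans_{k_0})$. Since each reflection moves the centroid by $O(\wtrans_{k_0} + \htrans_{k_0})$ and moreover by Lemma~\ref{lem-rattozero} we have $\htrans_{k_0} \ll \wtrans_{k_0}$, the total displacement is $O(N \wtrans_{k_0}) = O(\wtrans_{k_0}/\htrans_{k_0})$ --- which is \emph{not} obviously small. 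So this crude bound is insufficient; one must argue more carefully.

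The correct argument tracks the \emph{horizontal} drift more tightly. Each reflection changes $\xtrans$ of the reflected vertex by at most $2\wtrans$, but --- as in part~(iii) of the proof of Lemma~\ref{lem-rattozero} --- the triangle stays trapped in a downward-pointing sector of fixed aperture around the base point, because the $\ytrans$-coordinate drops by about $\htrans$ at each step while $|\xtrans|$ can grow by at most $2\wtrans$, and $\wtrans$ is small compared to $\htrans/\flatconst$-style quantities. More precisely: after $m$ reflections the centroid has dropped by $\approx m\,\htrans\cdot\frac{88}{300}$ in $\ytrans$, so if the centroid were to drift horizontally a distance $D$, the sector constraint $\ytrans \ge -\text{const}\cdot|\xtrans|$ forces $m\,\htrans \lesssim D$, while each horizontal step is $\le 2\wtrans$, giving $D \le 2m\wtrans \lesssim D\wtrans/\htrans$; since $\wtrans/\htrans$ is not small this is vacuous, so instead one bounds $D$ directly by noting the sector has a fixed opening, so the horizontal displacement is at most a fixed multiple of the total vertical drop, which in turn is at most the fixed diameter of $f^{-1}(I)$ in transformed coordinates. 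Taking $k_0$ large enough that this transformed diameter is $< \movedist$ (possible by Lemma~\ref{L:consequences of Hypothesis 1}\eqref{I:transformed diameter} applied to the compact set $f^{-1}(I)$, whose transformed image shrinks... no --- rather, one uses that the centroids all lie within the single small sector emanating from $\bb$, truncated to a bounded $\ytrans$-range, and that this whole region has transformed diameter $O(\htrans_{k_0}/\flatconst + \wtrans_{k_0})$, which tends to $0$), we conclude that the transformed centroids of $\triang_{k_0}$ and $\triang_k$ are within $\movedist$. I expect the main subtlety to be making the ``trapped in a shrinking sector'' claim precise uniformly over arbitrarily long reflection sequences, rather than any single estimate.
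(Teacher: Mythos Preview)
Your proposal has a genuine gap. You try to recycle the ``downward-pointing sector'' mechanism from the proof of Lemma~\ref{lem-rattozero}, but that argument was carried out under Hypothesis~2, i.e.\ assuming $\htrans/\wtrans > \flatconst$ for a fixed $\flatconst>0$. It is precisely the \emph{tallness} of the triangle that made the sector work: the per-step drop in $\ytrans$ was $\approx \htrans \ge \flatconst\wtrans$, which dominated the per-step horizontal drift $\le 2\wtrans$, giving a sector of fixed aperture $\flatconst/10$. By the time we reach Lemma~\ref{lem-must-contract}, Lemma~\ref{lem-rattozero} has already established $\htrans/\wtrans \to 0$: the triangles are \emph{flat}, not tall. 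In this regime the estimate~\eqref{eqn-funvalrel} no longer says that $\objfun$-differences track $\ytrans$-differences; instead the $\half\xtrans^2$ term dominates, the worst vertex need not be the highest, and a reflection need not drop $\ytrans$ at all. Your claim that ``each valid reflection must drop the $\ytrans$-coordinate \dots\ by roughly the full height'' is simply false here, and with it the sector confinement collapses (the aperture would be $\htrans/\wtrans \to 0$, which confines nothing). You correctly sense trouble in your last paragraph, but the fix you sketch is not one.

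The paper's argument is quite different and exploits the flatness rather than fighting it. One fixes a small $\epsilon$ and shows the centroid cannot escape the box $\{|\xtrans|\le\epsilon,\ |\ytrans|\le\epsilon\}$. Escape through the top or bottom is ruled out by function-value monotonicity (top: values there exceed those at $\triang_{k_0}$; bottom: values there fall below $\objfun(\bplimit)$), essentially as in part~(iv) of the Lemma~\ref{lem-rattozero} proof. The new idea is for the \emph{sides}: near $\xtrans \approx \pm\epsilon$, flatness gives $\htrans < 0.01\,\epsilon\,\wtrans$, so for two vertices differing by $(\deltax,\deltay)$ the increment $\xtrans\deltax \approx \epsilon\,\deltax$ swamps both $\deltay$ (bounded by $\htrans$) and the remainder terms. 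Hence the vertex with largest $|\xtrans|$ is worst, and reflection decreases $|\xtrans|$ of the centroid---the triangle is pushed back toward the center, not outward. This horizontal restoring force is the missing ingredient in your attempt.
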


\begin{proof}
We work in the coordinates $(\xtrans,\ytrans)$ of $\coordframe_{k_0}$.
It suffices to show
that for sufficiently small positive $\epsilon < \kappa/2$,
if $k_0$ is sufficiently large and $\triangtrans$ is a later RNM triangle
with centroid in the box $\{|\xtrans| \le \epsilon, |\ytrans| \le \epsilon\}$,
then the next move does not reflect $\triangtrans$ 
so that its centroid exits the box.
More precisely, for suitable $\epsilon$ and $k_0$, the idea is to prove:
\begin{enumerate}[\upshape (a)]
\item\label{I:top of box} 
The centroid cannot escape out the top of the box (i.e., the $\ytrans$-coodinate
cannot increase beyond $\epsilon$) because the function value
of the reflection point would exceed the function values of $\triang_{k_0}$
(i.e., the function values near the center of the box).
\item\label{I:bottom of box} 
The centroid cannot escape out the bottom because the function value
there would be less than the limiting value $\objfun(\bplimit)$.
\item\label{I:sides of box} 
The centroid cannot escape out either side, 
because the triangle $\triangtrans$ will be flat enough
that the function values there are controlled mainly 
by the $\xtrans$-coordinates, which force the triangle to reflect inward
towards the line $\xtrans=0$.
\end{enumerate}
The conditions on $\epsilon$ and $k_0$ will be specified in
the course of the proof.

\bigskip
\noindent
{\sl Proof of~\eqref{I:top of box}.\/}

We copy the argument used in proving \eqref{I:subsequent triangles}(iv) 
of Lemma~\ref{lem-rattozero}.
Let $\bb$ be the base point used to define $\coordframe_{k_0}$.
For $\bp$ along the top edge of the box, by definition $\ytrans=\epsilon$.
Thus
the same argument that proved \eqref{E:function along bottom edge}
shows that
\[
	\objfun(\bp)  = \objfun(\bb) + \epsilon + O(\epsilon^2),
\]
and that if $\epsilon$ is sufficiently small, 
then there are neighborhoods $U$ of the top edge
and $V$ of $(0,0)$ such that $\objfun(\bp) > f(\bb')$ holds
whenever $\bptrans \in U$ and $\trans\bb'\in V$.
If $k_0$ is sufficiently large, and $\triangtrans$ is the later
triangle whose centroid is about to exit the box through the top,
then by
Lemma~\eqref{L:consequences of Hypothesis 1}\eqref{I:transformed diameter}, 
$\triangtrans_{k_0}$ and $\triangtrans$ are small enough
that $\triangtrans_{k_0} \subset V$ and $\triangtrans \subset U$,
so the function values at vertices of $\triang$
are greater than those for $\triang_{k_0}$,
which is impossible since function values at vertices of
successive RNM triangles
are non-increasing.

\bigskip
\noindent
{\sl Proof of~\eqref{I:bottom of box}.\/}

This case is even closer to the proof of \eqref{I:subsequent triangles}(iv) 
in Lemma~\ref{lem-rattozero}.
That argument shows that 
if $\epsilon$ is sufficiently small and $k_0$ is sufficiently large,
then the function values at the vertices of a triangle $\triang$
whose transformed centroid is about to exit through the bottom are strictly
less than the value $f(\bplimit)$ (which is made arbitrarily close
to $f(\bb)$ by taking $k_0$ large).
This contradicts Lemma~\ref{L:consequences of Hypothesis 1}\eqref{I:function values tend to limit}.

\bigskip
\noindent
{\sl Proof of~\eqref{I:sides of box}.\/}

By symmetry, suppose that $\triangtrans$ reflects 
so that its centroid exits the box through the {\sl right} side.
By Lemma~\ref{L:consequences of Hypothesis 1}\eqref{I:transformed width and height} and Lemma~\ref{lem-rattozero}, we may take $k_0$ large enough that
\begin{equation}
\wtrans_{k_0} < 0.01\epsilon
\qquad\hbox{and}\qquad
    \htrans_{k_0} < 0.01 \epsilon \wtrans_{k_0}.
\label{eqn-hwsmall}
\end{equation}
The width $\wtrans$ and height $\htrans$ of $\triangtrans$
are the same as that of $\triangtrans_{k_0}$.
So all vertices of $\triangtrans$ satisfy
$0.99\epsilon<\xtrans<1.01\epsilon$
and $-1.01 \epsilon < \ytrans < 1.01\epsilon$.
Let $\bvtrans = (\xtrans,\ytrans)$ and
$\bvtrans' = (\xtrans + \deltax, \ytrans + \deltay)$
be two such vertices.

We claim that if $\deltax>\wtrans/10$, then $\objfun(\bv')>\objfun(\bv)$.
By~\eqref{eqn-objtransform}, 
\begin{align*}
\label{eqn-fundifdel}
    \objfun(\bv') - \objfun(\bv) 
	&=
     \xtrans \delta_x + \half \delta_x^2 + \delta_y
     + (\remain(\xtrans + \delta x,\ytrans + \delta y) -
      \remain(\xtrans,\ytrans)) \\
	&\ge (0.99\epsilon)(\wtrans /10) + 0 - \htrans
	- (o(\epsilon) \wtrans + O(\epsilon) \htrans)
	\quad\textup{(by integrating Lemma~\ref{lem-affdef}(ii))} \\
	&\ge 0.099 \epsilon \wtrans + 0 - \htrans - 0.001 \epsilon \wtrans - \htrans  \quad\textup{(if $\epsilon$ is sufficiently small)} \\
	&= 0.098 \epsilon \wtrans - 2 \htrans \\
	&> 0 \qquad\textup{(by the second inequality in~\eqref{eqn-hwsmall}).} \\
\end{align*}

Now we can mimic part of the proof of~\eqref{I:subsequent triangles}
in Lemma~\ref{lem-rattozero},
but in the horizontal rather than the vertical direction.
Let $\xtrans_\subbest,\xtrans_\subnext,\xtrans_\subworst$
be the $\xtrans$-coordinates of the vertices ordered 
by increasing function value,
and let $\xtrans_\subleft,\xtrans_\subxmid,\xtrans_\subright$
be the same $\xtrans$-coordinates in increasing order.
The previous paragraph shows that 
$\xtrans_\subbest,\xtrans_\subnext,\xtrans_\subworst$
are within $\wtrans/10$ of $\xtrans_\subleft,\xtrans_\subxmid,\xtrans_\subright$,
respectively.
The reflection decreases the $\xtrans$ coordinate of the reflected vertex by
\[
	\xtrans\subworst-\xtrans\subref
	= 2\xtrans\subworst - \xtrans\subbest - \xtrans\subnext,
\]
which is within $4(\wtrans/10)$ of
\[
	2\xtrans\subright - \xtrans\subleft - \xtrans\subxmid
	\ge \xtrans\subright - \xtrans\subleft = \wtrans,
\]
so the $\xtrans$ coordinate of the centroid decreases
instead of increasing beyond $\epsilon$ as hypothesized.
\end{proof}

%
%

\subsection{Conditions at an advanced contraction}

Assuming Hypothesis~1, we next show that,
whenever a contraction step is taken at a sufficiently
advanced iteration $k$, we have $\htrans_k\drop = O(\wtrans_k^2)$.
We stress the assumption that the base of the local coordinate frame
at iteration $k$ lies inside $\triang_k$.

%
%

\begin{lemma}
\label{lem-smallheight}
Assume Hypothesis~1.
If $k$ is sufficiently large
and a contraction step is taken at iteration $k$ 
(meaning that the reflection point was not accepted),
then the transformed height $\htrans$ and width $\wtrans$ of
$\triang_k$ in a coordinate frame with base point inside
$\triang_k$ must satisfy $\htrans \le 10 \wtrans^2$.
\end{lemma}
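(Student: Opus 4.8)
The plan is to compute the reflection-rejection condition $\objfun\subref\ge\objfun_2$ in the transformed coordinates $(\xtrans,\ytrans)$ of $\coordframe_k$ and to read the desired bound off a single algebraic identity. Order the vertices $\bp_1,\bp_2,\bp_3$ of $\triang_k$ so that $\objfun_1\le\objfun_2\le\objfun_3$, with transformed coordinates $\bptrans_i=(\xtrans_i,\ytrans_i)$, and write $\objfun_i=\objfun(\base)+\ytrans_i+\half\xtrans_i^2+\remain_i$ with $\remain_i=\remain(\xtrans_i,\ytrans_i)$, using~\eqref{eqn-objtransform}. Because the base point lies in $\triang_k$, the origin is a convex combination of the $\bptrans_i$, so $|\xtrans_i|\le\wtrans$ and $|\ytrans_i|\le\htrans$ for each $i$; and because the reflection formula~\eqref{eqn-twodref} is an affine combination with coefficient sum $1$, it commutes with the coordinate change, so the reflection point has transformed coordinates $\bptrans\subref=(\xtrans_1+\xtrans_2-\xtrans_3,\;\ytrans_1+\ytrans_2-\ytrans_3)$, with $|\xtrans\subref|=O(\wtrans)$ and $|\ytrans\subref|=O(\htrans)$.

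For $k$ large I will use the following standing facts, all of which hold under Hypothesis~1: $\triang_k$ and its rejected test points---in particular $\bp\subref$, which is rejected precisely because a contraction is taken---eventually lie in the fixed compact set $\Qscr$ of Lemma~\ref{L:consequences of Hypothesis 1} not containing the minimizer; the transformed diameter of $\triang_k$ tends to $0$; $\htrans_k=o(\wtrans_k)$ even when measured from the interior base point (by Lemma~\ref{lem-rattozero}, transferred via the near-identity estimate of Lemma~\ref{lem-closebase}\eqref{I:closebase matrix}); and $\alpha$ is bounded on $\Qscr$, with uniform remainder bounds~\eqref{eqn-remaindef} (Lemma~\ref{lem-affdef}). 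Combining these gives $\remain_1,\remain_2,\remain_3,\remain\subref=o(\wtrans^2)$.

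The heart of the argument is the identity
\[
  \objfun\subref-\objfun_2
  \;=\;
  (\objfun_1-\objfun_3)\;+\;(\xtrans_1-\xtrans_3)(\xtrans_2-\xtrans_3)\;+\;(\remain\subref-\remain_1-\remain_2+\remain_3),
\]
which I would derive by expanding $(\xtrans_1+\xtrans_2-\xtrans_3)^2-\xtrans_2^2=(\xtrans_1-\xtrans_3)(\xtrans_1+2\xtrans_2-\xtrans_3)$, substituting $\ytrans_1-\ytrans_3=(\objfun_1-\objfun_3)-\half(\xtrans_1-\xtrans_3)(\xtrans_1+\xtrans_3)-(\remain_1-\remain_3)$, and collecting the terms proportional to $\xtrans_1-\xtrans_3$. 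A contraction step means $\objfun\subref\ge\objfun_2$; since $|\xtrans_1-\xtrans_3|,|\xtrans_2-\xtrans_3|\le\wtrans$, the identity then forces $\objfun_3-\objfun_1\le\wtrans^2+o(\wtrans^2)$. On the other hand, for any two vertices,
\[
  |\ytrans_i-\ytrans_j|
  =\bigl|(\objfun_i-\objfun_j)-\half(\xtrans_i^2-\xtrans_j^2)-(\remain_i-\remain_j)\bigr|
  \le(\objfun_3-\objfun_1)+\wtrans^2+o(\wtrans^2),
\]
using $\half|\xtrans_i^2-\xtrans_j^2|\le\wtrans^2$; taking the maximum over the three pairs gives $\htrans\le(\objfun_3-\objfun_1)+\wtrans^2+o(\wtrans^2)\le 2\wtrans^2+o(\wtrans^2)$, which is at most $10\wtrans^2$ once $k$ is large.

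The crux will be the algebraic cancellation in the displayed identity: a priori the quadratic part $\half\xtrans^2$ of $\objfun$ contributes terms of size up to $\wtrans^2$, which can be comparable to or larger than $\htrans$, so it is not clear that they should collapse into the single product $(\xtrans_1-\xtrans_3)(\xtrans_2-\xtrans_3)$ that is cleanly bounded by $\wtrans^2$; obtaining this depends on the particular choice to eliminate $\ytrans_1-\ytrans_3$ through the best and worst $\objfun$-values rather than some other pair of vertices. Everything else is bookkeeping---verifying that the remainder terms really are $o(\wtrans^2)$ uniformly in the (interior) base point, for which the uniform estimates of Lemma~\ref{lem-affdef}, the flattening of Lemma~\ref{lem-rattozero}, and the coordinate-frame comparison of Lemma~\ref{lem-closebase} are the tools; alternatively, one can absorb the $\alpha\htrans^2$ contributions directly into $\htrans$ once $\htrans$ is small, which avoids invoking Lemma~\ref{lem-rattozero} here at all.
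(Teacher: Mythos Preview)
Your argument is correct, and the displayed identity
\[
  \objfun\subref-\objfun_2=(\objfun_1-\objfun_3)+(\xtrans_1-\xtrans_3)(\xtrans_2-\xtrans_3)+(\remain\subref-\remain_1-\remain_2+\remain_3)
\]
checks out exactly. It packages the two inequalities the paper also uses---$\objfun_3\ge\objfun_1$ and $\objfun\subref\ge\objfun_2$---into a single line that bounds the function spread $\objfun_3-\objfun_1$ by $\wtrans^2+o(\wtrans^2)$, from which $\htrans\le 2\wtrans^2+o(\wtrans^2)$ falls out at once for all vertex pairs. The paper instead works the two inequalities separately in the $\ytrans$-coordinates, bounds the remainders via the derivative estimates of Lemma~\ref{lem-affdef}\eqref{I:function r} (picking up a $\tfrac12|\ytrans_1-\ytrans_3|$ term that is then bootstrapped away), obtains $|\ytrans_1-\ytrans_3|\le 5\wtrans^2$, and invokes the $1\leftrightarrow 2$ symmetry to reach $\htrans\le 10\wtrans^2$. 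Your route is shorter, avoids the bootstrap and the symmetry step, and gives the sharper constant; the price is that you lean on Lemma~\ref{lem-rattozero} (or the alternative absorption you mention) to make the remainders $o(\wtrans^2)$ directly, whereas the paper's derivative-based estimate is more self-contained at this spot.
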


\begin{proof}
Given a base point of the local coordinate frame in $\triang_k$,
Lemma~\ref{lem-affdef} shows that the difference in
values of $\objfun$ at any two points $\bp$ and $\boldv$ is
\begin{equation}
    \objfun(\bp) - \objfun(\boldv) =
     \ytrans_{\bp} - \ytrans_{\boldv}
   + \half(\xtrans_{\bp}^2 - \xtrans_{\boldv}^2)
   +  \remain(\xtrans{_\bp},\ytrans_{\bp})
   - \remain(\xtrans_{\boldv}, \ytrans_{\boldv}).
\label{eqn-funvertform}
\end{equation}

For $i \in \{1,2,3\}$,
let $\bp_i$ be the $i^{\textup{th}}$ vertex of $\triang_k$,
and let $\bptrans_i$ be its transform in the local coordinate frame.
We assume throughout the proof that $\bp_3$ is the worst vertex.
Let $\bp\subref := \bp_1 + \bp_2 - \bp_3$ be the reflect point,
and let $\bptrans\subref$ be its transform.

The origin of the coordinate frame is inside $\triang_k$, so
$|\xtrans_i| \le \wtrans$ for $i=1,2,3$.  
The RNM triangles are flattening out (Lemma~\ref{lem-rattozero}),
and the flatness does not change very much when
measured using the coordinate frame with a nearby base point
(Lemma~\ref{lem-closebase}\eqref{I:closebase flatness}).  
Hence, if $k$ is large enough,
$\htrans \le \wtrans$, so
$|\ytrans_i| \le \wtrans$ for $i=1,2,3$.
Since $\bp_3$ is the worst vertex, $\objfun(\bp_3) - \objfun(\bp_1) \ge 0$.
Substituting \eqref{eqn-funvertform} and rearranging yields
\begin{equation}
  \label{eqn-firstdirfirst}
\ytrans_3-\ytrans_1 \ge
     \half(\xtrans_1^2 - \xtrans_3^2) +
   \remain(\xtrans_1,\ytrans_1) - \remain(\xtrans_3,\ytrans_3).
\end{equation}
Because $|\xtrans_i| \le \wtrans$ and $|\xtrans_j| \le \wtrans$,
we obtain $|\xtrans_i^2 - \xtrans_j^2| \le \wtrans^2$,
so the inequality~\eqref{eqn-firstdirfirst} implies
\begin{equation}
\label{eqn-firstdirsecond}
\ytrans_3-\ytrans_1 \ge
     -\half \wtrans^2  +
   \remain(\xtrans_1,\ytrans_1) - \remain(\xtrans_3,\ytrans_3).
\end{equation}

Next we use the definition of the
reflection point to obtain bounds in the other direction.
A contraction occurs only when the reflection point
is not accepted (see Step~3 of Algorithm RNM
in Section~\ref{sec-nmalgs}), which implies that
$\objfun(\bp\subref) -\objfun(\bp_2)  \ge 0$.
Substituting \eqref{eqn-funvertform} and rearranging yields
\begin{equation}
\label{eqn-seconddirfirst}
\ytrans\subref-\ytrans_2 \ge
     \half(\xtrans_2^2 - \xtrsubrsq) +
     \remain(\xtrans_2,\ytrans_2) - \remain(\xtrans\subref,\ytrans\subref).
\end{equation}
By definition of $\bp\subref$, we have
$\ytrans\subref - \ytrans_2 = \ytrans_1 - \ytrans_3$.
Substituting into the left-hand side of
\eqref{eqn-seconddirfirst} yields
\begin{equation}
  \label{E:another one}
\ytrans_1-\ytrans_3 \ge
     \half(\xtrans_2^2 - \xtrsubrsq) +
     \remain(\xtrans_2,\ytrans_2) - \remain(\xtrans\subref,\ytrans\subref).
\end{equation}
We have $|\xtrans_2| \le \wtrans$
and $|\xtrans\subref-\xtrans_2| = |\xtrans_1-\xtrans_3| \le \wtrans$,
so 
\[
	|\xtrans_2^2 - \xtrsubrsq| = |\xtrans_2+\xtrans\subref| \cdot
|\xtrans_2-\xtrans\subref| \le 3\wtrans^2,
\]
and substituting into~\eqref{E:another one} yields
\begin{equation}
\label{eqn-seconddirsecond}
\ytrans_1-\ytrans_3  \ge
     -\threehalves \wtrans^2 +
   \remain(\xtrans_2,\ytrans_2) - \remain(\xtrans\subref,\ytrans\subref).
\end{equation}
 
If $k$ is sufficiently large, we know from
Lemmas~\ref{lem-diamtozero} and \ref{lem-affdef}
that, in the smallest box containing a transformed advanced
RNM triangle and its reflection point, 
$|d\remain/d\xtrans| \le \wtrans$ and $|d\remain/d\ytrans| \le \half$.
Consequently,
\begin{align}
\label{eqn-remainrels}
|\remain(\xtrans_1,\ytrans_1) - \remain(\xtrans_3,\ytrans_3)| & \le 
\wtrans|\xtrans_1 - \xtrans_3| + \half|\ytrans_1 - \ytrans_3| 
\le \wtrans^2 + \half|\ytrans_1 - \ytrans_3| \\
|\remain(\xtrans_2,\ytrans_2) - \remain(\xtrans\subref,\ytrans\subref)|
& \le 
\wtrans|\xtrans_1 - \xtrans_3| + \half|\ytrans_1 - \ytrans_3|
\le \wtrans^2 + \half|\ytrans_1 - \ytrans_3|.
\nonumber
\end{align}
Substituting the equations~\eqref{eqn-remainrels} into
\eqref{eqn-firstdirsecond} and~\eqref{eqn-seconddirsecond},
respectively, we obtain
\begin{equation}
\label{eqn-togetherfirst}
\ytrans_3-\ytrans_1 \ge
     -\threehalves \wtrans^2 - \half|\ytrans_1 - \ytrans_3|
\quad\hbox{and}\quad
\ytrans_1-\ytrans_3  \ge
     -\fivehalves\wtrans^2 - \half|\ytrans_1 - \ytrans_3|.
\end{equation}
These imply $\ytrans_3 - \ytrans_1 \ge  -3\wtrans^2$
and $\ytrans_1 - \ytrans_3 \ge -5\wtrans^2$,
so $|\ytrans_1-\ytrans_3| \le 5 \wtrans^2$.
Our numbering of $\bp_1$ and $\bp_2$ was arbitrary,
so $|\ytrans_2-\ytrans_3| \le 5 \wtrans^2$ too.
These two inequalities imply $\htrans \le 10 \wtrans^2$.
\end{proof}

%
%
\begin{remark}
The lemma just proved applies
to an RNM triangle not at an arbitrary iteration, but only
at a sufficiently advanced iteration $k$.  
Even for large $k$,
the condition $\htrans \le 10\wtrans^2$ is necessary
but not sufficient to characterize an RNM triangle for which
a contraction occurs.
\end{remark}

Figures~\ref{fig-refheight} and \ref{fig-conheight} illustrate two
cases for the function $\half \xtrans^2 + \ytrans + \half \ytrans^2$.
The worst vertex is at the origin in each figure.
In Figure~\ref{fig-refheight}, we have $\htrans = 1.2 \times 10^{-6}$ and
$\wtrans = 2 \times 10^{-4}$, so $\htrans/\wtrans^2 = 30$;
as Lemma~\ref{lem-smallheight} would predict at an advanced iteration,
the triangle reflects instead of contracting.
In Figure~\ref{fig-conheight}, by contrast, 
$\htrans = 3 \times 10^{-8}$ and
$\wtrans = 2 \times 10^{-4}$, so $\htrans/\wtrans^2 = \frac{3}{4}$ and
an outside contraction is taken.
The vertical scale in each figure is greatly compressed
compared to the horizontal, and
the vertical scale in
Figure~\ref{fig-refheight} differs from that in
Figure~\ref{fig-conheight} by two orders of magnitude.

%
%
\begin{figure}[htb]
\begin{center}
{\includegraphics[width=5in,height=2in]{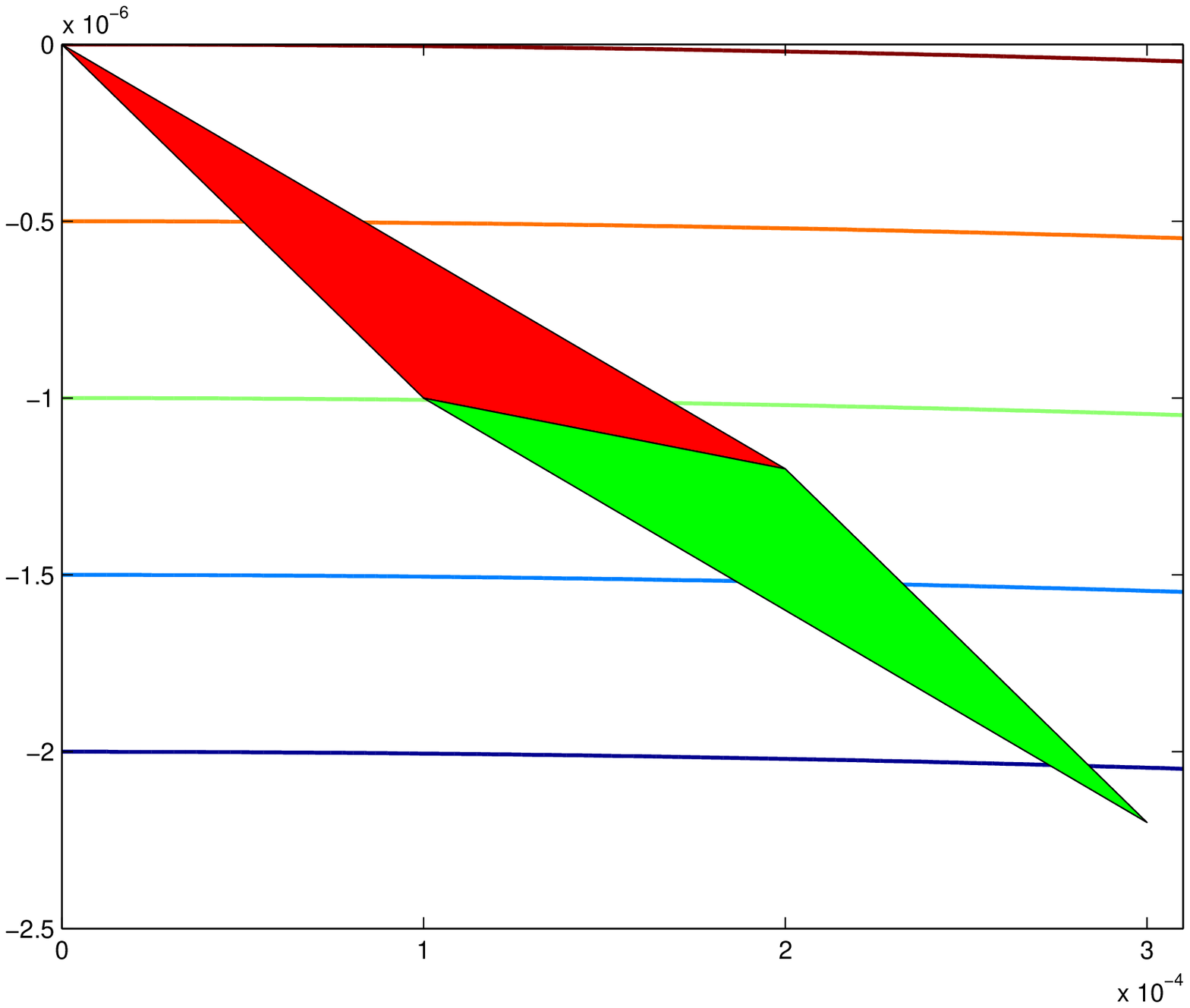}}
\caption{{\small The contours of
$\ytrans + \half \xtrans^2 + \half \ytrans^2$ are shown
along with an RNM triangle with $\htrans/{\wtrans}^2 = 30$.
The reflection is accepted.}}
\label{fig-refheight}
\end{center}
\end{figure}

%
%
\begin{figure}[htb]
\begin{center}
{\includegraphics[width=5in,height=2in]{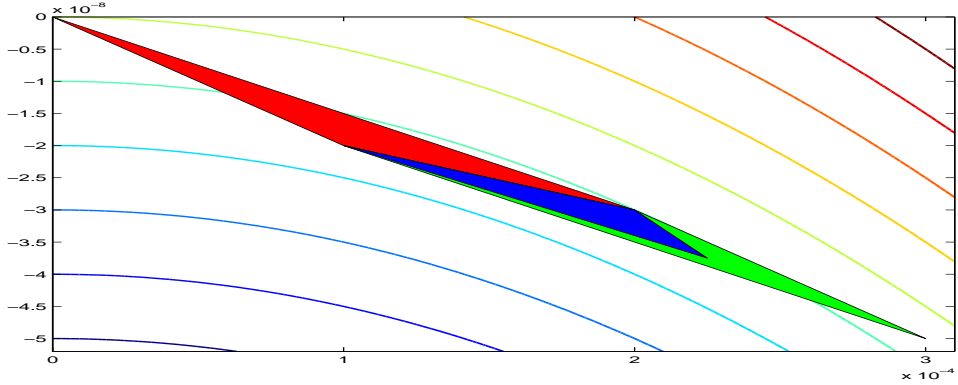}}
\caption{{\small The contours of
$\ytrans + \half \xtrans^2 + \half \ytrans^2$,
are shown along with an RNM triangle
with $\htrans/{\wtrans}^2 = \frac{3}{4}$.
The reflection step is not accepted, and an outside contraction is
performed.  Note the difference, by four
orders of magnitude, between the horizontal and vertical scales.}}
\label{fig-conheight}
\end{center}
\end{figure}

%
%

\begin{lemma}
\label{lem-smalldelta}
Under the assumptions of Lemma~\ref{lem-smallheight},
if $k$ is sufficiently large
and a contraction step is taken at iteration $k$, 
then $\flatness_k \le 10$, where $\flatness_k$ is
the flatness of $\triangtrans_k$ 
as in Definition~\ref{def-flatness}.
\end{lemma}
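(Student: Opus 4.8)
The plan is to combine the height estimate of Lemma~\ref{lem-smallheight} with a crude upper bound for the transformed area of a triangle in terms of its transformed width and height.

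First I would invoke Lemma~\ref{lem-smallheight}, whose hypotheses are exactly those assumed here (Hypothesis~1, a contraction taken at iteration $k$, $k$ sufficiently large, and a base point of the local coordinate frame lying inside $\triang_k$): it gives $\htrans_k \le 10\wtrans_k^2$ for $\triang_k$. Note also that $\triangtrans_k$ is nondegenerate, since $\triang_k$ is nondegenerate and the matrix $M$ of Lemma~\ref{lem-affdef} is nonsingular; hence $\Atrans_k>0$ and $\wtrans_k>0$, so the ratio defining $\flatness_k$ is well posed.

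Next I would bound $\Atrans_k$. Let $\bptrans_1,\bptrans_2,\bptrans_3$ be the transformed vertices of $\triangtrans_k$, and write $\bptrans_2-\bptrans_1=(b_1,b_2)$ and $\bptrans_3-\bptrans_1=(c_1,c_2)$. By the definitions of width and height, $|b_1|,|c_1|\le\wtrans_k$ and $|b_2|,|c_2|\le\htrans_k$, so
\[
  2\Atrans_k = |b_1 c_2 - b_2 c_1| \le |b_1 c_2| + |b_2 c_1| \le 2\wtrans_k\htrans_k ,
\]
i.e.\ $\Atrans_k \le \wtrans_k\htrans_k$. Combining this with the bound from Lemma~\ref{lem-smallheight},
\[
  \flatness_k = \frac{\Atrans_k}{\wtrans_k^3} \le \frac{\htrans_k}{\wtrans_k^2} \le 10 ,
\]
which is the assertion.

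There is no real obstacle here: the lemma is essentially a repackaging of Lemma~\ref{lem-smallheight}. The only care required is to check that the hypotheses of Lemma~\ref{lem-smallheight} match (in particular that the base point lies inside $\triang_k$, which is what forces $|\xtrans_i|\le\wtrans_k$ in that proof) and to record the elementary area estimate above; a sharper "half the bounding box" bound would even give $\flatness_k\le 5$, but the stated constant $10$ is all that is needed.
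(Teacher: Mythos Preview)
Your argument is correct and matches the paper's proof essentially line for line: invoke Lemma~\ref{lem-smallheight} to get $\htrans_k \le 10\wtrans_k^2$, use the elementary bound $\Atrans_k \le \wtrans_k\htrans_k$, and combine. The paper states the area inequality without the explicit determinant computation, but otherwise the reasoning is identical.
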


\begin{proof}
Let $\wtrans$, $\htrans$, $\Atrans$ be the width,
height, and area of $\triang_k$
with respect to the coordinate frame
associated by Lemma~\ref{lem-affdef} to a vertex of $\triang_k$.
If $k$ is sufficiently large, then
Lemma~\ref{lem-smallheight} implies $\htrans \le 10\wtrans^2$.  
Hence
\[
    \flatness_k = \frac{\Atrans}{\wtrans^3} \; \le \;
      \frac{\htrans \wtrans}{\wtrans^3 }
   \; \le \; \frac{(10\wtrans^2)\wtrans}{\wtrans^3} \; =\;  10.\qedhere
\]
\end{proof}

%
%
\subsection{Eliminating the impossible: increasing flatness is
unavoidable}\label{sec-elimimposs}

The final piece of the proof of Theorem~\ref{thm-rnmconv}
will show that, for sufficiently
advanced iterations, the flatness of the
RNM triangles must increase by a factor of at least $1.001$
{\sl within a specified number of
iterations following a contraction}.
To obtain this result, we begin by characterizing the
structure of RNM vertices at sufficiently advanced iterations
following a contraction, and then defining a related but
simpler triangle.

%
%
\subsubsection{A simpler triangle.}
\label{sec-simpletri}
Assume that (i) there
is a limit point $\bplimit$ of the RNM triangles that is not the
minimizer of $\objfun$, (ii) $k_0$ is sufficiently large, and
(iii) iteration $k_0$ is a contraction.
For the RNM triangle $\triang_{k_0}$, let $\coordframe_1$
denote the coordinate frame whose base point is
the vertex of $\triang_{k_0}$ with the worst value of $\objfun$:
\begin{equation}
\label{eqn-coordonedef}
\hbox{base}(\coordframe_1) = (\bp\subworst)_{k_0}.
\end{equation}
This first coordinate frame is used to identify
$\bptrans\subleft$ and $\bptrans\subright$, the
transformed vertices of $\triang_{k_0}$
with leftmost and rightmost $\xtrans$ coordinates.

A second coordinate frame, $\coordframe_2$,
is defined next whose base point (measured in frame $\coordframe_1$)
is the midpoint of $[\bptrans\subleft, \bptrans\subright]$:
\begin{equation}
\label{eqn-coordtwodef}
\hbox{base}(\coordframe_2) = \half(\bptrans\subleft +
\bptrans\subright).
\end{equation}
Unless otherwise specified, the coordinate frame
$\coordframe_2$ is used throughout the
remainder of this proof.
The base points of $\coordframe_1$ and $\coordframe_2$
will be  arbitrarily close if $k_0$ is sufficiently large.

We assume that
$k_0$ is sufficiently large so that the RNM triangles
have become tiny in diameter and
flattened out (Lemma~\ref{lem-rattozero}).
The reason for defining $\coordframe_2$ is that we can
choose a small $\eta > 0$ such that
the transformed three vertices of $\triang_{k_0}$,
measured in coordinate frame $\coordframe_2$, may be
expressed as
\begin{equation}
    {\ba}_0 = \mtx{c}{-\eta\\
                     -u\eta^2}, \quad
    {\bb}_0 = \mtx{c}{s\eta\\
                     t\eta^2}, \quad\hbox{and}\quad
    {\bc}_0 = \mtx{c}{\eta\\
                    u\eta^2},
\label{eqn-trik0verts}
\end{equation}
where vertex ${\ba}_0$ corresponds to $\bp\subleft$
and vertex ${\bc}_0$ to $\bp\subright$.

Without loss of generality the
value of $s$ in \eqref{eqn-trik0verts} can be taken as nonnegative.
The vertices ${\ba}_0$ and ${\bc}_0$ were leftmost and
rightmost when measured in $\coordframe_1$; by
Lemma~\ref{lem-closebase}\eqref{I:closebase matrix}, 
the $s$ in \eqref{eqn-trik0verts} cannot be too much larger than $1$.  
We assume that $k_0$ is large enough so that $0 \le s \le 1.00001$.

Because of the form of the vertices in \eqref{eqn-trik0verts}
and the bounds on $s$,
the transformed width $\wtrans$ of $\triang_{k_0}$ (measured using
coordinate frame $\coordframe_2$) can be no
larger than $2.00001\eta$.
Iteration $k_0$ is, by assumption, a contraction, so
it follows from Lemma~\ref{lem-smallheight} that
the transformed height of $\triang_{k_0}$ satisfies
$\htrans \le 10 \wtrans^2$, and hence
$\htrans \le 40.0005\eta^2$.  
Since $\htrans$ is equal to
the larger of $2|u|\eta^2$ or $(|u| + |t|)\eta^2$, it follows that
$|u| \le 40.0005$ and $|t| \le 40.0005$ in~\eqref{eqn-trik0verts}.

If $\triang$ and $\triang'$ are any two consecutive RNM
triangles in which the same coordinate frame is used,
the new vertex of $\triang'$ is a linear combination
of the vertices of $\triang$,
with rational coefficients defined by the choice of worst vertex
and the nature of the move.
(See \eqref{eqn-twodref}--\eqref{eqn-twodinc}.)
Furthermore, the values of $\wtrans$ and $\htrans$ in $\triang$
and $\triang'$ remain the same or decrease, and, if $\bv$ is
any vertex of $\triang$ and $\bv'$ is any vertex of $\triang'$,
then $|\xtrans_{\bv'} - \xtrans_{\bv}|\le 2\wtrans$
and $|\ytrans_{\bv'} - \ytrans_{\bv}|\le 2\htrans$.
Thus, after $\ell \ge 0$ moves, we reach a triangle $\triang_{k_0+\ell}$
for which each transformed vertex $\bvtrans$ has the form
\begin{equation}
\label{eqn-lammurels}
\bvtrans = \mtx{c}{\lambda\eta\\
                     \mu\eta^2},
\quad\hbox{where}\quad
|\lambda| \le 1.00001 + 4.00002\ell \;\;\hbox{and}\;\;
|\mu| \le 40.0005(1+2\ell).
\end{equation}

%
%
\subsubsection{Rescaled inequalities associated with RNM moves.}
\label{sec-rnminequalities}
The next step is 
to make a rescaling of coordinates to define a triangle ${\btriangell}$
that is related to $\triangtrans_{k_0+\ell}$
by the diagonal affine transformation $\diag(\eta,\eta^2)$.
Let $\bptrans = (\lambda\eta, \mu\eta^2)$ be a point
in $\triangtrans_{k_0 + \ell}$ measured in $\coordframe_2$.
Then
\begin{equation}
\label{eqn-triangrels}
\bptrans = \mtx{c}{\lambda\eta\\
                     \mu\eta^2} \quad
     \;\;\hbox{corresponds to}\quad
\bP = \mtx{c}{\lambda\\
               \mu}
\;\;\hbox{(a point in $\btriangell$)},
\end{equation}
where $\lambda$ and $\mu$ satisfy the bounds~\eqref{eqn-lammurels}.
The flatness of $\btriangell$,
defined as $\hbox{area}(\btriangell)/(\hbox{width}(\btriangell))^3$,
is equal to the flatness of $\triang_{k_0+\ell}$ measured in
coordinate frame $\coordframe_2$.

Assume now that $\ell\le 20$; the reason for this
limit on $\ell$ will emerge later in Proposition~\ref{prop-14steps}.
For vertex $i$ of $\triang_{k_0 + \ell}$, equation~\eqref{eqn-lammurels} 
shows that
the coefficients in its transformed coordinates satisfy
$|\lambda_i| < 82$ and $|\mu_i| < 3000$.
By \eqref{eqn-objtransform},
\eqref{eqn-remaindef}, and \eqref{eqn-lammurels}, once
$k_0$ is large enough to make $o(\eta^2)$ sufficiently small,
the difference in $\objfun$ values between vertices $i$ and $j$ is
\begin{eqnarray}
\nonumber
   \objfun(\bv_i) - \objfun(\bv_j) & = &
   \eta^2 [(\half\lambda_i^2 + \mu_i) - (\half\lambda_j^2 + \mu_j)]
   + \remain(\eta\mu_i,\eta^2\lambda_i) - \remain(\eta\mu_j,\eta^2\lambda_j^2)\\
\label{eqn-funsimplediff}
   & = & \eta^2 [ (\half\lambda_i^2 + \mu_i) - (\half\lambda_j^2 + \mu_j)]
   + o(\eta^2).
\end{eqnarray}

Let $\psi$ denote the simple quadratic function
\begin{equation}
    \psi(\lambda,\mu) \definedas \half \lambda^2 + \mu.
\label{eqn-psidef}
\end{equation}
Then \eqref{eqn-funsimplediff} shows that, if $k_0$ is large
enough, the following
relationships hold between $\objfun$ 
at vertices of $\triang_{k_0 + \ell}$ and
$\psi$ at vertices of ${\btriangell}$:
\begin{equation}
\label{eqn-fimplicationone}
   \objfun(\bv_i) \ge \objfun(\bv_j)  \quad\hbox{implies}\quad 
   \psi(\lambda_i,\mu_i) > \psi(\lambda_j,\mu_j) - 10^{-6},
\end{equation}
where $10^{-6}$ is not magical, but simply a number small enough
so our subsequent results follow.

%
%
\begin{example}
For illustration, let $\ell = 0$.  Based on~\eqref{eqn-trik0verts},
the vertices of ${\btriangzer}$ are given by
\begin{equation}
\label{eqn-btriangzerverts}
    {\bA}_0 = \mtx{c}{-1\\
                     -u}, \quad
    {\bB}_0 = \mtx{c}{s\\
                     t}, \quad\hbox{and}\quad
    {\bC}_0 = \mtx{c}{1\\
                    u},
\end{equation}
and suppose that ${\ba}_0$ is the worst transformed vertex
of $\triang_{k_0}$, i.e.\ that
$$
    \objfun({\ba}_0) \ge \objfun({\bb}_0) \quad\hbox{and}\quad
    \objfun({\ba}_0) \ge \objfun({\bc}_0).
$$
Application of~\eqref{eqn-fimplicationone} gives
$\psi(-1, -u) > \psi(s,t)  -10^{-6}$
and $\psi(-1, -u) > \psi(1,u)  -10^{-6}$, i.e.
$$
    \half -u  > \half s^2 + t - 10^{-6}\quad\hbox{and}\quad
    10^{-6} > 2u\;\;
\hbox{(a simplification of $\half -u  > \half + u - 10^{-6}$)}.
$$
\end{example}

In this way, inequalities characterizing the
transformed vertices \eqref{eqn-lammurels} of $\triang_{k_0+\ell}$ when
applying the RNM algorithm with function $\objfun$ can be derived in
terms of vertices of the simpler
triangle ${\btriangell}$ when applying the RNM algorithm to the
function $\psi(\lambda,\mu)$, except that
{\sl both possible outcomes of a comparison
must be allowed\/} if the two values of $\psi$ are within $10^{-6}$.
The importance of~\eqref{eqn-fimplicationone} is that, for
$\ell\le 20$, a possible sequence of RNM moves specifying the
move type and worst vertex leads to a set of algebraic
inequalities in $s$, $t$, and $u$.

%
%
\subsection{Flatness must increase after no more than 14 steps}
\label{sec-fourteensteps}

In the remainder of this section, we consider the transformed
width, area, and flatness
of a sequence of RNM triangles, $\triang_{k_0}$, \dots,
$\triang_{k_0 + \ell}$, defined using a coordinate
frame whose base point is in $\triang_{k_0}$.  Accordingly,
notation is needed that separately identifies
the RNM triangle being measured
and the relevant coordinate frame.
The value $\flatness_{k}^{(1)}$ will denote the flatness of RNM
triangle $\triang_{k}$ measured in $\coordframe_1$ of \eqref{eqn-coordonedef},
and $\flatness_{k}^{(2)}$ will denote the flatness of $\triang_k$
measured in $\coordframe_2$ \eqref{eqn-coordtwodef}, with similar notation for
$\wtrans$ and $\Atrans$.
Since the base points of coordinate frames
$\coordframe_1$ and $\coordframe_2$ 
are in $\triang_{k_0}$, an essential point is that, when $k> k_0$,
the triangle containing the base point of the coordinate frame
is different from the triangle being measured.

The result in the following proposition
was found using symbolic computation software.

%
%
\begin{proposition}
\label{prop-14steps}
Assume Hypothesis~1.
If $k_0$ is sufficiently large and a contraction step is taken
at iteration $k_0$, then there exists $\ell$ with $1 \le \ell \le 14$
such that $\flatness_{k_0+\ell}^{(2)} > 1.01\; \flatness_{k_0}^{(2)}$.
\end{proposition}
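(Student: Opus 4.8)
The plan is to reduce the statement about the RNM algorithm applied to $\objfun$ to a finite combinatorial statement about the RNM algorithm applied to the explicit quadratic $\psi(\lambda,\mu) = \half\lambda^2 + \mu$, and then to verify that combinatorial statement by exhaustive case analysis, carried out with symbolic computation. By the setup of Section~\ref{sec-simpletri}, once $k_0$ is large, the vertices of $\triangtrans_{k_0}$ in $\coordframe_2$ have the form \eqref{eqn-trik0verts} with $0\le s\le 1.00001$, $|u|\le 40.0005$, $|t|\le 40.0005$, and after rescaling by $\diag(\eta,\eta^2)$ we get the triangle $\btriangzer$ with vertices \eqref{eqn-btriangzerverts}. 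The flatness of $\triangtrans_{k_0+\ell}$ measured in $\coordframe_2$ equals the flatness of $\btriangell$, so the Proposition is equivalent to: for every $(s,t,u)$ in the allowed parameter region and every legal RNM move sequence (of the $\psi$-algorithm, with the relaxed comparison rule from \eqref{eqn-fimplicationone}) starting at $\btriangzer$, within $14$ moves one reaches a triangle whose flatness exceeds $1.01$ times that of $\btriangzer$.

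First I would make the translation precise: a ``move sequence'' is a choice, at each step, of which vertex is declared worst and which of reflect / outside-contract / inside-contract is performed (shrinks are excluded since $\psi$ is strictly convex in the relevant sense, cf.\ the shrink remark after Lemma~\ref{lem-diamtozero}); by \eqref{eqn-twodref}--\eqref{eqn-twodinc} the new vertex is an explicit rational combination of the old vertices, so the vertices of $\btriangell$ are affine-linear in $(s,t,u)$ with rational coefficients depending only on the move-type string. The legality of a given string is a conjunction of the inequalities coming from \eqref{eqn-fimplicationone} (each comparison $\objfun(\bv_i)\ge\objfun(\bv_j)$ becomes $\psi(\lambda_i,\mu_i) > \psi(\lambda_j,\mu_j) - 10^{-6}$, and the ``accept / don't accept'' tests likewise become $\psi$-inequalities), which are polynomial (quadratic) in $(s,t,u)$. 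Since $\ell\le 14 < 20$, the bounds \eqref{eqn-lammurels} hold, justifying the use of \eqref{eqn-fimplicationone} uniformly throughout. Thus ``legal string of length $\le 14$ for which flatness never inflates by $1.01$'' defines a semialgebraic subset of the box in $(s,t,u)$-space, and the claim is that this set is empty.

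Next I would bound the branching. A reflect step is forced whenever the flatness does not change (Lemma~\ref{lem-refeffects}), so the only way flatness can fail to increase is via reflections, or via contractions that happen to decrease width slowly — but Lemma~\ref{lem-smallheight}/\ref{lem-smalldelta} already tell us a contraction forces flatness $\le 10$, and one checks that a contraction step on a sufficiently flat triangle multiplies $\Atrans/\wtrans^3$ by a definite factor $>1.01$ unless the contraction is ``along'' the flat direction in a way that the width bound prevents. So the real work is to show that a long run of reflections cannot persist: reflections translate the centroid monotonically (the $\mu$-coordinate of the centroid strictly decreases at each legal reflection by the argument in Lemma~\ref{lem-rattozero}(ii), rescaled), and the legality inequalities \eqref{eqn-fimplicationone} cannot all hold once the triangle has drifted too far relative to the fixed parabola $\mu = -\half\lambda^2$ of $\coordframe_2$. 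Quantifying this ``drift per reflection $\ge$ constant'' and ``total admissible drift $\le$ constant'' gives the bound $\ell\le 14$; I would then let symbolic computation enumerate the finitely many reflect/contract strings of length $\le 14$ and, for each, either exhibit a point of the parameter box violating a legality inequality (string is infeasible) or compute the rational function $\flatness(\btriangell)/\flatness(\btriangzer)$ and verify it exceeds $1.01$ on the feasible part of the box (e.g.\ by quantifier elimination or by interval arithmetic on the explicit rational functions).

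\textbf{Main obstacle.} The conceptual content is not hard, but the bookkeeping is: I expect the main obstacle to be the branching blow-up in the case analysis — naively, a string of $14$ moves over $3$ choices of worst vertex and $3$ move types is $9^{14}$ possibilities, so one needs the structural reductions (reflections forced by flatness-preservation; a contraction immediately inflating flatness unless width is nearly preserved; the monotone centroid drift cutting off the reflection runs) to prune this to a tractable tree before handing it to the computer. A secondary subtlety is keeping the ``slop'' honest: the $10^{-6}$ in \eqref{eqn-fimplicationone}, the constants $1.00001$, $40.0005$, and the $o(\eta^2)$ errors all accumulate over up to $14$ steps, and one must confirm that the final margin — flatness inflation by $1.01$, comfortably above $1$ — survives all of them; this is precisely why the statement is stated with $1.01$ rather than something tighter, and why $\ell\le 20$ is assumed in Section~\ref{sec-rnminequalities} to license \eqref{eqn-fimplicationone} throughout.
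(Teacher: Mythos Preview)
Your high-level strategy matches the paper's: reduce to the rescaled triangle $\btriangzer$ parameterized by $(s,t,u)$, encode RNM legality as polynomial inequalities via~\eqref{eqn-fimplicationone}, and verify by computer that every feasible move string hits flatness ratio $>1.01$ within $14$ steps. Two points of execution differ, and one is a genuine gap. First, a structural simplification you miss: since the $\lambda$-coordinates of the vertices of $\btriangell$ depend only on $s$ while the $\mu$-coordinates are affine in $(t,u)$, each $\psi$-comparison inequality has the form $\phi_i(s) + \nu_i t + \omega_i u \ge \theta_i$ with $\phi_i$ quadratic in $s$ and $\nu_i,\omega_i,\theta_i$ rational. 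For fixed $s$ the system is \emph{linear} in $(t,u)$, and Farkas' lemma (tested on all three-row subsystems) reduces feasibility to a Boolean combination of quadratic inequalities in the single variable $s$; together with the width ratio (piecewise linear in $s$) this makes each node of the search tree a one-variable decision problem. This, not general three-variable quantifier elimination, is what lets the computation run in minutes.

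Second---and this is the gap---the paper does not derive the bound $14$ a priori. It recursively explores the tree of feasible move strings, pruning a branch as soon as the $s$-constraints become infeasible or the flatness ratio exceeds $1.01$, and \emph{observes} that the tree has depth $14$ (the a priori ceiling $\ell\le 20$ in Section~\ref{sec-rnminequalities} exists only to justify~\eqref{eqn-fimplicationone} uniformly; $14$ is an output, not an input). Your proposed route, the centroid-drift argument borrowed from Lemma~\ref{lem-rattozero}, does not work as stated: that argument requires $\htrans/\wtrans > \rho$, precisely the regime Lemma~\ref{lem-rattozero} has just ruled out for large $k_0$, and in the rescaled $(\lambda,\mu)$-coordinates the legality inequalities~\eqref{eqn-fimplicationone} are purely local comparisons between vertices, imposing no absolute bound on how far the centroid may drift. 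Moreover the longest surviving strings in the paper's Appendix (e.g.\ $\{6,8,7,9,8,7,9,8,4,6,2,5\}$) are dominated by contractions, not reflections, so a bound on reflection runs would not control them. Drop the drift heuristic and organize the search as recursive prune-and-branch with the Farkas reduction, and your proposal becomes the paper's proof.
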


Before giving the proof, we sketch the basic idea. 
As just described in Section~\ref{sec-rnminequalities},
we are in a situation where two properties
apply: (1) the transformed objective function at the
scaled point $(\lambda,\mu)\T$ can be
very well approximated by the quadratic function 
$\psi(\lambda,\mu) \definedas \half \lambda^2 + \mu$ in~\eqref{eqn-psidef},
and (2) the RNM move sequences of interest can be analyzed by
beginning with an initial simplified (scaled) triangle whose
vertices (see~\eqref{eqn-btriangzerverts}) involve bounded
scalars $(s,t,u)$ that lie in a compact set.  Under these
conditions, the proof explains how algebraic constraints can be derived
that characterize geometrically valid sequences of RNM moves.
Further algebraic constraints involving $s$ can also be defined
that must be satisfied when the 
flatness increases by a factor of no more than $1.01$.

In principle, one could establish the result of the proposition
by numerically checking flatness for all geometrically
valid RNM move sequences beginning with the simplified triangle,
but this approach is complicated, structureless,
and too time-consuming for numerical
calculation.  Instead, we used Mathematica\texttrademark~7.0 to 
construct symbolic inequalities representing RNM move sequences
such that
\begin{itemize}
\item $s$, $t$, and $u$ are suitably bounded, 
\item the geometric condition~\eqref{eqn-fimplicationone} for a valid RNM move applies, and
\item the flatness increases by a factor of {\sl less than or equal to\/} $1.01$.
\end{itemize}

\begin{proof}[Proof of Proposition~\ref{prop-14steps}]
The flatness is not changed by a reflection step as long
as the same coordinate frame is retained.
Assuming that $k_0$ is sufficiently large and that the move taken
during iteration $k_0$ is a contraction, we wish to show that there is
an index $\ell$ satisfying
$1 < \ell\le 14$ such that the flatness $\flatness$ of
the RNM triangle $\triang_{k_0 + \ell}$, {\sl measured in
coordinate frame} $\coordframe_2$, must be a factor
of at least $1.01$ larger than the flatness of $\triang_{k_0}$,
i.e., that
\begin{equation}
\label{eqn-largeflatrat}
    \frac{\flatness_{k_0 +\ell}^{(2)}}{\flatness_{k_0}^{(2)}} =
\frac{\Atrans_{k_0 +\ell}^{(2)}}{\Atrans_{k_0}^{(2)}}
    \left(\frac{\wtrans_{k_0}^{(2)}}
    {\wtrans_{k_0 + \ell}^{(2)}}\right)^{\!\! 3}
    > 1.01.
\end{equation}

Let us prove~\eqref{eqn-largeflatrat} directly
for $\ell = 1$ when ${\bA}_0$ of~\eqref{eqn-btriangzerverts}
is the worst vertex of ${\btriangzer}$ and an inside contraction occurs.
In this case, the next triangle ${\btriangone}$ has vertices
\begin{equation} \label{eqn-vertices}
    {\bA}_1 = \mtx{c}{\fourth s - \fourth\\ \fourth t - \fourth u}, \quad
    {\bB}_1 = \mtx{c}{s\\
                     t}, \quad\hbox{and}\quad
    {\bC}_1 = \mtx{c}{1\\
                    u},
\end{equation}
where the first vertex $\bA_0$ has been
replaced.
We have two cases:
\begin{itemize}
\item
If $0 \le s \le 1$, then $\wtrans({\btriangzer})= 2$ 
and $\wtrans({\btriangone}) = \fivefourths - \fourth s \le \fivefourths$,
which implies
that $\wtrans({\btriangzer})/\wtrans({\btriangone}) \ge \frac{8}{5}$.
\item If  $1< s \le 1.00001$, then
$\wtrans({\btriangzer})\ge  2$ and 
$\wtrans({\btriangone}) = \threefourths s + \fourth$, so that
$\wtrans({\btriangone}) \le 1.0000075$ and 
$\wtrans({\btriangzer})/\wtrans({\btriangone}) \ge 1.9999$.
\end{itemize}
For all $s$ satisfying $0 \le s \le 1.00001$, it follows that
$\wtrans({\btriangzer})/\wtrans({\btriangone}) \ge \frac{8}{5}$,
and hence that
\[
  \left(\frac{\wtrans({\btriangzer})}
   {\wtrans({\btriangone})}\right)^{\!\! 3} 
   \ge \left({\frac{8}{5}}\right)^{\!\! 3} = 4.096.
\]
The area of ${\btriangone}$ is half the area of
${\btriangzer}$.  Hence the ratio of the flatnesses of ${\btriangone}$
and ${\btriangzer}$ satisfies
\[
  \frac{\flatness({\btriangone})}{\flatness({\btriangzer})} =
\frac{\Atrans({\btriangone})}{\Atrans({\btriangzer})}
  \left(\frac{\wtrans({\btriangzer})}
  {\wtrans({\btriangone})}\right)^{\!\! 3} 
 \ge \half(4.096) > 1.01.
\]
The same argument applies when
${\btriangone}$ is the result of an {\sl outside}
contraction in which vertex ${\bA}_0$ is the worst.

But when the sequence of moves begins with
a contraction in which vertex ${\bB}_0$ or ${\bC}_0$ is worst,
we must break into further cases,
and the analysis becomes too complicated to do by hand.
To examine such sequences of RNM moves, we use a
Mathematica program that generates
inequalities involving vertices of
$\btriangell$ and the function $\psi$ of~\eqref{eqn-psidef},
as described in
Section~\ref{sec-rnminequalities}.

Any sequence of RNM moves (where a move is specified by
the worst vertex and the type of move) starting with triangle
$\triang_{k_0}$ gives rise to a set of algebraic inequalities in
$s$, $t$, and $u$.
The $i^{\textup{th}}$ of these latter inequalities has one of the forms
$\phi_i(s) + \nu_i t + \omega_i u > \theta_i$
or $\phi_i(s) + \nu_i t + \omega_i u \ge \theta_i$,
where $\phi_i(s)$ is a quadratic polynomial in $s$ with rational
coefficients, and
$\nu_i$, $\omega_i$, and $\theta_i$ are rational constants.

The next step is to determine whether there are
acceptable values of $s$, $t$, and $u$ for which these inequalities
are satisfied.
To do so, we begin by treating $s$ as constant (temporarily)
and considering the feasibility of a system of {\sl linear\/}
inequalities in $t$ and $u$,
namely the system $Nz \ge d$, where $z = (t\;\; u)^T$,
the $i^{\textup{th}}$ row of $N$ is
$(\nu_i \;\; \omega_i)$, and $d_i = \theta_i - \phi_i(s)$.
A variant of Farkas' lemma \cite[page 89]{Schrijver}
states that the system of linear inequalities
$Nz \ge d$ is feasible if and only if $\gamma^T d \le 0$
for every vector
$\gamma$ satisfying $\gamma\ge 0$ and  $N^T \gamma = 0$.
If the only nonnegative vector $\gamma$ satisfying
$N^T \gamma = 0$ is $\gamma = 0$, then $Nz \ge d$ is feasible
for any $d$.

The existence (or not) of a nonnegative nonzero $\gamma$
in the null space of $N^T$ can be determined symbolically by noting
that the system $Nz \ge d$ is feasible if and only if
it is solvable for every subset of three rows of $N$.
Let $\widehat N$ denote the $3\times 2$ matrix consisting of
three specified rows
of $N$, with a similar meaning for $\widehat d$.
To determine the feasibility of $\widehat Nz \ge \widehat d$,
we first find a vector $\widehat\gamma$
such that ${\widehat N}^T \widehat\gamma = 0$.

If $\widehat N$ has rank $2$, then $\widehat\gamma$
is unique (up to a scale factor) and we can write
${\widehat N}^T$ (or
a column permutation) so that the leftmost $2\times 2$ submatrix
$B$ is nonsingular.  Then, with
$$
    {\widehat N}^T = \mtx{ccc}{\nu_1 & \nu_2 & \nu_3\\
                       \omega_1 & \omega_2 & \omega_3} =
    \mtx{cc}{B & h},
\quad\hbox{$\widehat\gamma$ is a multiple of}\;\;
    \mtx{c}{-B\inv h\\
              1},
$$
where the components of $B\inv$ and $h$ are rational numbers.
If (with appropriate scaling) $\widehat\gamma\ge 0$ with at least
one positive component,
then ${\widehat N}^T z \ge \widehat d$ is solvable
if and only if $\widehat\gamma^T {\widehat d} \le 0$.  If the
components of $\widehat\gamma$ do not have the same sign,
${\widehat N}^T z \ge \widehat d$ is solvable for
any $\widehat d$.

If ${\widehat N}$ has rank one, its three rows must be
scalar multiples of the same vector, i.e., the $i^{\textup{th}}$ row
is $(\beta_i \nu_1 \;\; \beta_i \omega_1)$,
and the null vectors of ${\widehat N}^T$ are linear
combinations of $(\beta_2, -\beta_1, 0)^T$,
$(0, \beta_3, -\beta_2)^T$, and
$(\beta_3,0,-\beta_1)^T$.

Since the components of $d$ are quadratic
polynomials in $s$ and the components of each $\widehat\gamma$
are rational numbers, the
conditions for feasibility of $Nz \ge d$ (e.g., the
conjunction of conditions that
${\widehat\gamma}^T \widehat d \le 0$ for each set of three
rows of $N$) can be expressed as a Boolean combination of
quadratic inequalities in $s$ with rational coefficients
that, for a given value of $s$, evaluates to ``True'' if and
only if there exist $t$ and $u$ such that these inequalities
are satisfied.

To verify the result of the
proposition for a given sequence of $\ell$ RNM moves applied
to ${\btriangzer}$, we need to compute the flatness of
${\btriangell}$, which is, by construction, equal to
the flatness of $\triang_{k_0+\ell}$ measured in
coordinate frame $\coordframe_2$; see~\eqref{eqn-triangrels}.
We can directly calculate the ratio of the area of $\btriangell$
to the area of ${\btriangzer}$ by using the
number of contractions in the move sequence,
since each contraction multiplies the area by $\half$.
The width of ${\btriangell}$
can be obtained using inequalities and linear polynomials
in $s$, since the width is determined by the largest and smallest
$\xtrans$ coordinates, which are linear polynomials in $s$.
Consequently, the condition that the flatness for each
triangle in the sequence is less than $1.01$ times the original
flatness can be expressed as a Boolean combination of (at most cubic)
polynomial inequalities in $s$, where $s$ is constrained to
satisfy $0 \le s \le 1.00001$.

To determine whether there are allowable
values of $s$ for which a specified sequence of RNM moves is
possible, observe that
a Boolean combination of polynomial inequalities in $s$
will evaluate to ``True'' for $s$ in a certain union
of intervals that can be computed as follows.
We first find the values of $s$ that are solutions
of the polynomial {\sl equations\/} obtained by replacing any
inequalities by equalities.
Then, between each adjacent pair of solutions,
we choose a test value (e.g., the midpoint)
and check whether the associated inequality evaluates to ``True''
on that interval.

The computation time can be cut in half by considering
only sequences that begin with an {\sl inside} contraction,
for the following reason. The outside contraction
point for an original triangle $\triang$ with vertices
$\bp_1$, $\bp_2$, and $\bp_3$ is equal to the
inside contraction
point for a triangle, denoted by $\triang'$, whose worst vertex
$\bp_3$ is the reflection point $\bp\subref$ of $\triang$.
With exact computation, the conditions for an outside
contraction of $\triang$ differ
from those for an inside contraction of $\triang'$
if equality holds in some of the comparisons.
In particular, if $\objfun(\bp_3) > \objfun(\bp\subref) \ge \objfun(\bp_2)$,
then $\triang$ will undergo an outside contraction
and $\triang'$ will undergo an inside contraction; 
but if $\objfun(\bp_3) = \objfun(\bp\subref)$, then both $\triang'$ and
$\triang$ will undergo inside contractions.
Since our inequalities allow for a small error in comparisons,
this difference will not change the result,
and we may assume that the RNM move at
$\triang_{k_0}$ is an inside contraction.

Finally, the definition of the RNM algorithm imposes further
constraints on valid move patterns.
For example, if a reflection occurs, the reflection point
must be strictly better than the second-worst vertex,
so this reflection point
cannot be the worst point in the new triangle.
Such sequences (impossible in the RNM algorithm) would be
permitted by the small error allowed in the inequalities, so
they are explicitly disallowed in the Mathematica code.

Putting all this together, a program can test each sequence
of valid operations that begins with an inside contraction to
determine whether there exists an initial triangle for
which ratio of the flatnesses, measured in $\coordframe_2$,
is less than $1.01$.
The results of this computation show that,
within no more than 14 RNM moves following a contraction,
a triangle is always reached for which the ratio of the
flatnesses, measured in the second coordinate frame $\coordframe_2$,
is at least $1.01$.  We stress that the count of 14 moves includes
a mixture of
reflections and both forms of contraction.  Details of these
move sequences can be found in the appendix.
There we list the $s$-values and the associated
sequences of 14 or fewer RNM moves for which
the ratio of the flatnesses remains less than $1.01$.
\end{proof}

Proposition~\ref{prop-14steps} used $\coordframe_2$,
but its analogue for $\coordframe_1$ follows almost
immediately
with a slightly smaller constant in place of $1.01$.

%
%
\begin{lemma}
\label{lem-14steps}
Under the assumptions of Proposition~\ref{prop-14steps},
there exists $\ell$ with $1 \le \ell \le 14$ such that
\[
     \flatness_{k_0+\ell}^{(1)} >
       1.001\; \flatness_{k_0}^{(1)}.
\]
\end{lemma}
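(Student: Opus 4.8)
The plan is to deduce this from Proposition~\ref{prop-14steps} by comparing the flatness of a fixed RNM triangle as measured in the frame $\coordframe_1$ of \eqref{eqn-coordonedef} versus the frame $\coordframe_2$ of \eqref{eqn-coordtwodef}. The only input needed is Lemma~\ref{lem-closebase}\eqref{I:closebase flatness}, together with the observation that, once $k_0$ is large, the base points of $\coordframe_1$ and $\coordframe_2$ (both in $\triang_{k_0}$) and every vertex of $\triang_{k_0},\dots,\triang_{k_0+14}$ lie within a region of diameter $O(\operatorname{diam}\triang_{k_0})$, which tends to $0$ by Lemma~\ref{L:consequences of Hypothesis 1}\eqref{I:diameter tends to 0}; hence any two of these points can be made to lie within a prescribed $\delta$ of one another.

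First I would fix a small $\epsilon>0$, say $\epsilon = 0.002$, chosen so that $\frac{(1-\epsilon)^2}{1+\epsilon}\cdot 1.01 > 1.001$, and take $\delta>0$ as supplied by Lemma~\ref{lem-closebase}\eqref{I:closebase flatness} for this $\epsilon$ and for $C=1$; then I would enlarge the ``sufficiently large'' threshold on $k_0$ so that all the base points in play are within $\delta$. Let $\ell\in\{1,\dots,14\}$ be as in Proposition~\ref{prop-14steps}, so $\flatness_{k_0+\ell}^{(2)} > 1.01\,\flatness_{k_0}^{(2)}$. The one subtlety is that Lemma~\ref{lem-closebase}\eqref{I:closebase flatness} requires width greater than height \emph{in the first of the two frames}, and by Remark~\ref{R:width greater than height} this is guaranteed only for a frame based at a vertex of the triangle being measured. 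For $\triang_{k_0}$ there is nothing to do: $\coordframe_1$ is based at a vertex of $\triang_{k_0}$, so applying Lemma~\ref{lem-closebase}\eqref{I:closebase flatness} to $\triang_{k_0}$ with first frame $\coordframe_1$ and second frame $\coordframe_2$ gives $\flatness_{k_0}^{(2)} > (1-\epsilon)\,\flatness_{k_0}^{(1)}$. For $\triang_{k_0+\ell}$ neither $\coordframe_1$ nor $\coordframe_2$ is based at a vertex of that triangle, so I would route the comparison through an auxiliary frame $\coordframe_0$ based at a vertex of $\triang_{k_0+\ell}$ (which exists, since that vertex lies in $\Qscr$ and is not the minimizer): applying Lemma~\ref{lem-closebase}\eqref{I:closebase flatness} to $\triang_{k_0+\ell}$ with first frame $\coordframe_0$ and second frame $\coordframe_1$, and again with second frame $\coordframe_2$, and combining, yields $\flatness_{k_0+\ell}^{(1)} > \frac{1-\epsilon}{1+\epsilon}\,\flatness_{k_0+\ell}^{(2)}$.

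Finally I would chain these three inequalities with the conclusion of Proposition~\ref{prop-14steps}:
\[
    \flatness_{k_0+\ell}^{(1)}
    \;>\; \frac{1-\epsilon}{1+\epsilon}\,\flatness_{k_0+\ell}^{(2)}
    \;>\; \frac{1-\epsilon}{1+\epsilon}\cdot 1.01\cdot\flatness_{k_0}^{(2)}
    \;>\; \frac{(1-\epsilon)^2}{1+\epsilon}\cdot 1.01\cdot\flatness_{k_0}^{(1)}
    \;>\; 1.001\,\flatness_{k_0}^{(1)},
\]
which is exactly the assertion. I do not anticipate any genuine obstacle here: the content is entirely in Lemma~\ref{lem-closebase}\eqref{I:closebase flatness} and Proposition~\ref{prop-14steps}, and the only care needed is the routing through $\coordframe_0$ so as to satisfy the width-exceeds-height hypothesis, everything else being a uniform smallness consequence of the triangle diameters shrinking to $0$.
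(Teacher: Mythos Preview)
Your proof is correct and follows essentially the same approach as the paper's: both deduce the lemma from Proposition~\ref{prop-14steps} by using Lemma~\ref{lem-closebase}\eqref{I:closebase flatness} to compare flatness in $\coordframe_1$ and $\coordframe_2$, then chain the resulting inequalities. The one difference is that you are more careful than the paper about the width-exceeds-height hypothesis of Lemma~\ref{lem-closebase}\eqref{I:closebase flatness} when applied to $\triang_{k_0+\ell}$---the paper simply asserts the two-sided bound \eqref{eqn-flatcompare} without comment, whereas you correctly observe that neither $\coordframe_1$ nor $\coordframe_2$ is based at a vertex of $\triang_{k_0+\ell}$ and route through an auxiliary vertex-based frame $\coordframe_0$ to make Remark~\ref{R:width greater than height} applicable; this is a genuine (if minor) tightening of the argument.
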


\begin{proof}
The base point of $\coordframe_1$ is the worst point of $\triang_{k_0}$;
the base point of $\coordframe_2$ is the midpoint of the edge
of $\triang_{k_0}$
joining the two vertices whose $\xtrans$ coordinates are
leftmost and rightmost when measured in $\coordframe_1$.
By choosing $k_0$ to be large enough,
the two base points can be made arbitrarily close.
Lemma~\ref{lem-closebase}\eqref{I:closebase flatness}
with $\epsilon = 0.0001$ shows
that for large enough $k_0$, 
the flatnesses of triangles $\triang_{k_0}$ and
$\triang_{k_0+\ell}$ measured in coordinate frames
$\coordframe_1$ and $\coordframe_2$ satisfy
\begin{equation}
\label{eqn-flatcompare}
   0.9999 \; \flatness_{k_0}^{(1)}  \le  \flatness_{k_0}^{(2)}
       \le 1.0001\;\flatness_{k_0}^{(1)}\quad\hbox{and}\quad
      0.9999 \; \flatness^{(2)}_{k_0+\ell}
     \le  \flatness^{(1)}_{k_0+\ell}
    \le 1.0001 \;\flatness^{(2)}_{k_0+\ell}.
\end{equation}
Now, for $\ell$ as in Proposition~\ref{prop-14steps},
\begin{align*}
    \flatness_{k_0+\ell}^{(1)} & \ge  0.9999\; \flatness_{k_0+\ell}^{(2)} \\
                          & > 0.9999 (1.01) \flatness_{k_0}^{(2)} \qquad\textup{(by Proposition~\ref{prop-14steps})} \\
                   & \ge  0.9999(1.01) (0.9999) \flatness_{k_0}^{(1)} \\
                   & >  1.001 \; \flatness_{k_0}^{(1)}.\qedhere
\end{align*}
\end{proof}

%
%
\subsection{Completion of the proof}

The main result of this paper is the following theorem 
(called Theorem~\ref{thm-rnmconv} in Section~\ref{sec-introduction}).

%
%
\begin{theorem}\label{thm-converges}
If the RNM algorithm is applied to 
a function $\objfun \in \calF$,
starting from any nondegenerate triangle, 
then the algorithm converges to the unique minimizer of $\objfun$.
\end{theorem}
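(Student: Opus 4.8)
The plan is to finish the proof by contradiction: Hypothesis~1 is already in force, and I show it cannot hold. The idea, following the overview in Section~\ref{sec-bigpic}, is that at every sufficiently advanced contraction the flatness is pinned below $10$ by Lemma~\ref{lem-smalldelta}, while Lemma~\ref{lem-14steps} forces the flatness to grow by a definite factor within a bounded number of steps after each contraction; so along a suitable subsequence of contractions the flatness would have to escape to infinity, which is absurd. As a preliminary, Hypothesis~1 forces infinitely many contraction steps: a two-dimensional reflection is a $180^\circ$ rotation of the triangle, hence an isometry, so if only finitely many contractions occurred the diameter would be eventually constant and positive, contradicting Lemma~\ref{lem-diamtozero}. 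Fix once and for all a small $\epsilon>0$ and a small $\movedist>0$, and choose $k_0$ large enough that every ``sufficiently advanced iteration'' hypothesis invoked below — from Lemmas~\ref{lem-rattozero}, \ref{lem-must-contract}, \ref{lem-smalldelta}, \ref{lem-14steps} and Remark~\ref{R:width greater than height} — is in effect from $k_0$ on, and large enough that the diameters of all RNM triangles from $k_0$ on are negligible relative to the tolerances those statements provide.

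The heart of the matter is the following claim: if a contraction occurs at an iteration $k\ge k_0$, then there is a strictly later contraction, at an iteration $k'$, such that $\flatness_{k'}>(1-\epsilon)(1.001)\,\flatness_k$ and the worst vertex of $\triang_{k'}$ can be made as close as we wish to the worst vertex of $\triang_k$ by taking $k_0$ large; here each flatness is measured in the local frame based at the corresponding worst vertex, which for $\triang_k$ is exactly the frame $\coordframe_1$ of~\eqref{eqn-coordonedef}. To prove the claim I work in the \emph{fixed} frame $\coordframe_1$ attached to $\triang_k$. A reflection step leaves the width, height, and area of an RNM triangle unchanged when measured in a fixed frame (Lemma~\ref{lem-refeffects}, together with the fact that in the transformed plane the step acts as a $180^\circ$ rotation, which preserves area), so the flatness measured in $\coordframe_1$ is a function of the iteration index that is constant between consecutive contractions. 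Lemma~\ref{lem-14steps} supplies an index $k+\ell$, $\ell\le 14$, at which this function already exceeds $1.001\,\flatness_k$; consequently a contraction must occur within those $\ell$ steps, and the next contraction after the last such contraction, call it $k'$, still sees the inflated value, since the steps in between are reflections. Finally, passing from $\triang_k$ to $\triang_{k'}$ consists of at most $14$ steps (each displacing the centroid by $O(\mathrm{diam})$) followed by a single run of reflections, which by Lemma~\ref{lem-must-contract} displaces the transformed centroid by less than $\movedist$; since $\mathrm{diam}\to 0$, the worst vertices of $\triang_k$ and $\triang_{k'}$ are within $O(\movedist)+o(1)$, hence within the $\delta$ of Lemma~\ref{lem-closebase}\eqref{I:closebase flatness} once $k_0$ is large. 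That lemma, applicable with $C=1$ because $\wtrans>\htrans$ for advanced triangles (Remark~\ref{R:width greater than height}), converts the $\coordframe_1$-flatness of $\triang_{k'}$ into its own-vertex flatness at the cost of a factor $1-\epsilon$, which establishes the claim.

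Granting the claim, choose $\epsilon$ small enough that $(1-\epsilon)(1.001)>1.0005$. Starting from any advanced contraction and applying the claim repeatedly yields a strictly increasing sequence of contraction iterations, all $\ge k_0$, along which the flatness grows by a factor exceeding $1.0005$ at each step; hence the flatness tends to infinity along this sequence. But each of these is an advanced contraction, so Lemma~\ref{lem-smalldelta} bounds its flatness by $10$, a contradiction. Therefore Hypothesis~1 cannot hold, i.e.\ the RNM algorithm applied to any $\objfun\in\calF$ from any nondegenerate triangle converges to the unique minimizer, which is Theorem~\ref{thm-converges} (equivalently Theorem~\ref{thm-rnmconv}).

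The step I expect to be the main obstacle is the coordinate-frame bookkeeping inside the second paragraph. Because flatness depends on the frame and the natural frame moves at every iteration, the argument is viable only because three things line up: reflections are flatness-neutral in a fixed frame, so a long reflection run can be handled as a single unit; Lemma~\ref{lem-must-contract} (supplemented by $\mathrm{diam}\to 0$ on the short, contraction-heavy stretches) keeps the relevant base points from drifting by more than an infinitesimal amount; and Remark~\ref{R:width greater than height} is exactly the hypothesis that makes Lemma~\ref{lem-closebase}\eqref{I:closebase flatness} usable, since the flatness of a tall thin triangle is genuinely unstable under a small change of frame. The delicate part is to carry out this passage between frames while keeping the accumulated frame-change error safely below the $1.001$ growth margin that Lemma~\ref{lem-14steps} provides.
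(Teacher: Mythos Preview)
Your proof is correct and follows essentially the same approach as the paper: derive a contradiction from Hypothesis~1 by showing that the flatness, tracked along the subsequence of contraction iterations, grows geometrically (via Lemma~\ref{lem-14steps} plus frame-change control through Lemmas~\ref{lem-closebase} and~\ref{lem-must-contract}) while Lemma~\ref{lem-smalldelta} caps it at $10$. The only organizational difference is that the paper inserts an intermediate index $k_1$ (the iteration produced by Lemma~\ref{lem-14steps}) and performs two frame switches, $\coordframe_{k_0}\to\coordframe_{k_1}\to\coordframe_{k_2}$, each over a short distance, whereas you jump directly from the contraction $k$ to the next useful contraction $k'$ with a single frame switch and bound the combined displacement in one stroke; this is a harmless repackaging. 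One small point to tighten: your phrase ``within $O(\movedist)+o(1)$'' mixes a transformed-coordinate bound (from Lemma~\ref{lem-must-contract}) with an untransformed one, and the order of choosing $\epsilon$, then $\delta$ from Lemma~\ref{lem-closebase}\eqref{I:closebase flatness}, then $\movedist$ small enough (using the uniform bound on $M$ from Lemma~\ref{lem-affdef}\eqref{I:uniformly continuous}), then $k_0$, should be stated explicitly---but this is bookkeeping, not a gap.
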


\begin{proof}
In this proof, $\flatness_j(\triang_i)$
denotes the flatness of RNM triangle $\triang_i$ measured in
a coordinate frame $\coordframe_{j}$
whose base point is the {\sl worst vertex of triangle $\triang_j$}.

Given a small positive number $\kappa$,
let $k_0$ be sufficiently large
(we will specify how small and how large as we go along).
As mentioned in Section~\ref{sec-bigpic},
the RNM triangle must contract infinitely often,
so we may increase $k_0$ to assume that $\triang_{k_0}$ contracts.
Lemma~\ref{lem-14steps} shows that the flatness measured
in $\coordframe_{k_0}$ increases by a factor of $1.001$ 
in at most $14$ RNM moves;
i.e., there exists $k_1$ with $k_0 < k_1 \le k_0 + 14$
such that 
\begin{equation}
  \label{E:grew by 1.001}
    \flatness_{k_0}(\triang_{k_1}) > 1.001\; \flatness_{k_0}(\triang_{k_0}).
\end{equation}
We now switch coordinate frames on the left hand side:
Lemma~\ref{lem-closebase}\eqref{I:closebase flatness}
and Remark~\ref{R:width greater than height}
show that the flatness of $\triang_{k_1}$ 
in $\coordframe_{k_1}$ is close
to its flatness in $\coordframe_{k_0}$.
In particular, if $k_0$ is sufficiently large, then
\begin{equation}
  \label{E:switch frame 1}
    \flatness_{k_1}(\triang_{k_1}) \ge 0.9999\; \flatness_{k_0}(\triang_{k_1}).
\end{equation}
Let $k_2 \ge k_1$ be the first iteration after (or equal to) $k_1$
such that $\triang_{k_2}$ contracts.
Lemma~\ref{lem-must-contract} shows that
if $k_0$ is sufficiently large,
then from iteration $k_1$ to the beginning of iteration $k_2$,
the distance travelled by the centroid, measured in $\coordframe_{k_1}$,
is less than $\movedist$.
During those iterations, the RNM triangle retains its shape
and hence its flatness, as measured in $\coordframe_{k_1}$;
that is, 
\begin{equation}
  \label{E:equal flatness}
	\flatness_{k_1}(\triang_{k_2}) =  \flatness_{k_1}(\triang_{k_1}).
\end{equation}
If $\kappa$ was small enough,
Lemma~\ref{lem-closebase}\eqref{I:closebase flatness}
and Remark~\ref{R:width greater than height}
again imply 
\begin{equation}
\label{E:switch frame 2}
    \flatness_{k_2}(\triang_{k_2}) \ge 
     0.9999\; \flatness_{k_1}(\triang_{k_2}).
\end{equation}
Combining \eqref{E:grew by 1.001}, \eqref{E:switch frame 1}, 
\eqref{E:equal flatness}, and~\eqref{E:switch frame 2} 
yields 
\[
    \flatness_{k_2}(\triang_{k_2}) >
       (0.9999)^2(1.001) \flatness_{k_0}(\triang_{k_0})
    > 1.0007 \; \flatness_{k_0}(\triang_{k_0}).
\]

If $k_0$ is sufficiently large,
then repeating the process that led from $k_0$ to $k_2$
defines $k_0<k_2<k_4<\cdots$ such that
\[
	\flatness_{k_{2n}}(\triang_{k_{2n}}) >
	(1.0007)^n \flatness_{k_0}(\triang_{k_0})
\]
for all $n$: 
  to know that the same lower bound on $k_0$
works at every stage, we use that 
in Lemma~\ref{lem-closebase}\eqref{I:closebase flatness} 
the number $\delta$ is independent of $\bb_1$, $\bb_2$, and $\triang$.
Now, if $n$ is sufficiently large, then 
\[
	\flatness_{k_{2n}}(\triang_{k_{2n}}) > 10.
\]
But $\triang_{k_{2n}}$ contracts, 
so this contradicts Lemma~\ref{lem-smalldelta}.

Hence the assumption made at the beginning of our long chain
of results, Hypothesis~1,
must be wrong.
In other words, the RNM algorithm {\sl does} converge
to the minimizer of $\objfun$.
\end{proof}

%
\section{Concluding Remarks}
\label{sec-concluding}

\subsection{Why do the McKinnon examples fail?}
\label{sec-mckinfit-old}
For 
general interest, we briefly revisit
the smoothest McKinnon counterexample~\eqref{eqn-mckinsamp}, which
consists of a twice-continuously
differentiable function $f$ and a specific starting triangle for which the
RNM algorithm converges to a nonminimizing point 
(with nonzero gradient).  The Hessian matrix is positive semidefinite
and singular at the limit point, but positive definite everywhere else.
Thus all the assumptions in our
convergence theorem are satisfied except for
positive-definiteness of the Hessian,
which fails at one point.
Hypothesis \ref{hyp1} is valid for this example,
and it is enlightening to examine where the proof by
contradiction fails.

The McKinnon iterates do satisfy several of the
intermediate lemmas in our proof: the RNM triangles not only flatten out
(Lemma~\ref{lem-rattozero}), but they do so more rapidly than the
rate proved in
Lemma~\ref{lem-smallheight}.\footnote{As $k\to\infty$,
the McKinnon triangles satisfy
$\htrans_k\approx \wtrans_k^{\theta}$
for $\theta = \mod{\lambda_2}(1+\mod{\lambda_2})/\lambda_1\approx 3$,
where $\lambda_{1,2} = (1\pm\sqrt{33})/8$.}
However, an essential reduction step,
Lemma~\ref{lem-closebase},
fails to hold for the McKinnon example, as discussed below.

Positive-definiteness of the Hessian plays a crucial role in our proof
by contradiction because it allows us to uniformly approximate the
objective function close to the limit
point $\bplimit$ by its degree-$2$ Taylor polynomial.
Applying a well-defined change of variables, the function $\half x^2 + y$
for a simple triangle can then be taken as a surrogate, and
we can essentially reduce the problem 
to studying the RNM algorithm for 
the objective function $\half x^2 + y$ near the
non-optimal point $(0,0)$.
In the McKinnon example~\eqref{eqn-mckinsamp}, however,
the objective function near the limit point $(0,0)$
cannot be (uniformly) well approximated by $\half x^2 + y$, 
even after a change of variable. 
Although the Hessian of the McKinnon function $f$ remains positive definite
at base points in $\triang_k$ as $k\to\infty$,
it becomes increasingly close to singular, in such a way that 
ever-smaller changes in the base point will eventually not 
satisfy the closeness conditions of Lemma~\ref{lem-closebase}.
In fact, the actual shape of the McKinnon objective function 
allows a sequence of RNM moves that are forbidden for $\half x^2 + y$
near the non-optimal point $(0,0).$
namely an infinite sequence of inside contractions with the best vertex
never replaced. 
In dynamical terms, the
McKinnon objective function allows symbolic dynamics  forbidden for
$\half x^2 + y$ near $(0,0)$, 
and these symbolic dynamics  evade the contradiction in our argument.

\subsection{An instance of RNM convergence}
Most of this paper has been devoted to analysis of situations
that we subsequently show cannot occur; this is the nature of
arguments by contradiction.  For contrast, we present one
example where the RNM algorithm will converge, as we have proved,
on the strictly convex quadratic function
$$
  f(x,y) = 2x^2 + 3y^2 + xy - 3x + 5y,
$$
whose minimizer is $x^* = (1, -1)^T$.  Using starting vertices
$(0, 0.5)\T$, $(0.25, -0.75)\T$, and
$(-0.8, 0)\T$, after 20 RNM iterations the best vertex 
is $(0.997986, -1.00128)\T$, and the RNM triangles are obviously
converging to the solution. The first nine iterations are
depicted in Figure~\ref{fig-rnmquad}.

%
%
\begin{figure}[htb]
\begin{center}
{\includegraphics[width=4in,height=2.25in]{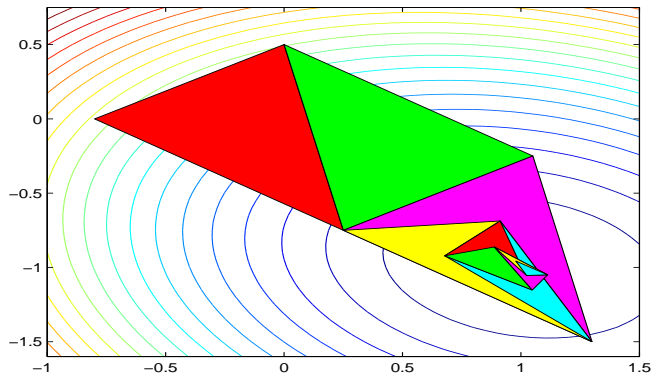}}
\caption{\small Convergence of the RNM algorithm
on a strictly convex quadratic function.}
\label{fig-rnmquad}
\end{center}
\end{figure}

\subsection{Significance of the results in this paper}

This paper began by noting that very little is known about the
theoretical properties of the original Nelder--Mead method,
despite 45 years of practice.  It is fair to say that proving convergence
for an RNM algorithm in two dimensions on a
restricted class of functions adds only a little more to
this knowledge.  This contribution seems of interest, however,
because of the lack of other results despite determined
efforts, and the introduction of dynamical systems  methods 
to the analysis.

Our analysis applies only to  a simplified (``small step") version of the 
original Nelder--Mead method which excludes expansion steps. 
 We have observed that in thousands of computational
experiments with functions defined in
$\real^n$ ($n\ge 2$) in which the Nelder--Mead method converges to a
minimizer, expansion steps are almost never taken in the
neighborhood of the optimum.  Expansion steps are typically taken
early on, forming part of the ``adaptation to the local contours''
that constituted the motivation for Nelder and Mead when they
originally conceived the
algorithm \cite{NM}.  Thus the RNM algorithm appears to represent,
to a large extent, the behavior of the original method near the
solution. In this direction, it would be valuable if these empirical observations 
could be rigorously justified under a
well-defined set of conditions.  The
observed good performance of the Nelder--Mead method
on many real-world problems remains a puzzle.

This paper applies dynamical systems methods to the
analysis of the RNM algorithm.
The use of such ideas in the proofs,
particularly that of a (rescaled) local coordinate frame in 
Section~\ref{sec-rnminequalities}, 
may also be useful in other contexts where it is valuable to
connect the geometry of a simplex with the contours of the objective
function. 
The evolving geometric figures  of the algorithm remain  one of
the intuitive appeals of the original Nelder--Mead method,
leading to the nickname of ``amoeba method''  \cite{Recipes}.
There may well be other applications, but 
the latest direct search methods tend to exhibit a less
clear connection with geometry.

Finally, our analysis for the RNM algorithm relies in part on
the fact that the volume of the RNM simplex is non-increasing
at every iteration, thereby avoiding the difficulties
associated with expansion steps.  Consequently, McKinnon's question
remains open: does the original Nelder--Mead algorithm,
including expansion steps, always converge for the function
$x^2+y^2$, or more generally for a class of functions like those
treated in Theorem~\ref{thm-converges}?  We hope that
further development of the dynamical systems approach could lead to
progress on this question.

%

\section*{Appendix: Computation for Proposition~\ref{prop-14steps}.}

This appendix provides details of the symbolic computation
performed to prove Proposition~\ref{prop-14steps}.
We regard the coding of moves as a form of {\sl symbolic dynamics\/}
for the RNM iteration. Moves are represented as follows:
{\tt 1}, {\tt 2}, and {\tt 3} denote reflections
with, respectively, vertex ${\bA}$, ${\bB}$, or ${\bC}$ 
of~\eqref{eqn-btriangzerverts}
taken as the worst vertex, i.e.\ replaced during the move.
Similarly, {\tt 4}, {\tt 5}, and {\tt 6} denote inside contractions,
and {\tt 7}, {\tt 8}, {\tt 9}  denote outside contractions with
worst vertex ${\bA}, {\bB}, {\bC},$ respectively.

We describe a sequence of move numbers as {\sl possible}\ 
for a given $s \in [0,1.00001]$ 
if there exist $t,u \in [-40.0005,40.0005]$
such that 
for the triangle~\eqref{eqn-btriangzerverts} described by $(s,t,u)$,
\begin{enumerate}[\upshape (i)]
\item 
the variables $s,t,u$ satisfy the inequality
implied by~\eqref{eqn-fimplicationone} for each RNM move,
\item
the flatness after each step is less than or equal to
$1.01$ times the original flatness, and
\item 
no reflection undoes an immediately preceding reflection.
\end{enumerate}

\begin{remark}
Because~\eqref{eqn-fimplicationone} involves a relaxation of $10^{-6}$,
a sequence characterized as ``possible'' using the
first two properties listed above
could be impossible for the RNM algorithm
{\sl in exact arithmetic}.
This is why the third condition
explicitly prohibits sequences in
which a reflection undoes the previous move, something that can never
happen in the RNM algorithm.
\end{remark}

In the proof of Proposition~\ref{prop-14steps},
we described a symbolic algorithm for computing all possible sequences
beginning with an inside contraction.
The Mathematica output below lists all these sequences.

\begin{verbatim}
{5} possible for s in {{0.999999, 1.00001}}
{5, 6} possible for s in {{0.999999, 1.00001}}
{6} possible for s in {{0.582145, 1.}}
{6, 2} possible for s in {{0.582145, 0.737035}}
{6, 2, 1} possible for s in {{0.582145, 0.695708}}
{6, 2, 1, 3} possible for s in {{0.582145, 0.654949}}
{6, 2, 1, 3, 2} possible for s in {{0.582145, 0.654949}}
{6, 2, 1, 3, 6} possible for s in {{0.582145, 0.654949}}
{6, 2, 1, 3, 6, 2} possible for s in {{0.616769, 0.654949}}
{6, 2, 1, 3, 6, 2, 5} possible for s in {{0.616769, 0.64706}}
{6, 2, 1, 3, 6, 8} possible for s in {{0.582145, 0.64706}}
{6, 2, 1, 3, 6, 8, 4} possible for s in {{0.582145, 0.623495}}
{6, 2, 1, 3, 9} possible for s in {{0.582145, 0.644579}}
{6, 2, 1, 6} possible for s in {{0.582145, 0.695708}}
{6, 2, 1, 9} possible for s in {{0.582145, 0.673138}}
{6, 2, 1, 9, 2} possible for s in {{0.616769, 0.673138}}
{6, 2, 1, 9, 2, 5} possible for s in {{0.616769, 0.64706}}
{6, 2, 1, 9, 8} possible for s in {{0.582145, 0.64706}}
{6, 2, 1, 9, 8, 4} possible for s in {{0.582145, 0.623495}}
{6, 2, 5} possible for s in {{0.582145, 0.737035}}
{6, 2, 5, 4} possible for s in {{0.582145, 0.695708}}
{6, 2, 5, 7} possible for s in {{0.582145, 0.681931}}
{6, 2, 5, 7, 6} possible for s in {{0.582145, 0.635866}}
{6, 2, 5, 7, 9} possible for s in {{0.582145, 0.681931}}
{6, 2, 5, 7, 9, 5} possible for s in {{0.582145, 0.679967}}
{6, 2, 5, 7, 9, 8} possible for s in {{0.582145, 0.663254}}
{6, 2, 5, 7, 9, 8, 4} possible for s in {{0.582145, 0.646912}}
{6, 2, 5, 7, 9, 8, 7} possible for s in {{0.582145, 0.663254}}
{6, 2, 5, 7, 9, 8, 7, 6} possible for s in {{0.582145, 0.663254}}
{6, 2, 5, 7, 9, 8, 7, 6, 5} possible for s in {{0.589537, 0.663254}}
{6, 2, 5, 7, 9, 8, 7, 6, 5, 1} possible for s in {{0.589537, 0.635373}}
{6, 2, 5, 7, 9, 8, 7, 9} possible for s in {{0.582145, 0.65445}}
{6, 2, 5, 7, 9, 8, 7, 9, 5} possible for s in {{0.582145, 0.651784}}
{6, 2, 5, 7, 9, 8, 7, 9, 5, 4} possible for s in {{0.582145, 0.651784}}
{6, 2, 5, 7, 9, 8, 7, 9, 5, 4, 3} possible for s in {{0.582145, 0.651784}}
{6, 2, 5, 7, 9, 8, 7, 9, 8} possible for s in {{0.597869, 0.65445}}
{6, 2, 5, 7, 9, 8, 7, 9, 8, 4} possible for s in {{0.597869, 0.65445}}
{6, 2, 5, 7, 9, 8, 7, 9, 8, 4, 6} possible for s in {{0.597869, 0.65445}}
{6, 2, 5, 7, 9, 8, 7, 9, 8, 4, 6, 2} possible for s in {{0.597869, 0.654004}}
{6, 2, 5, 7, 9, 8, 7, 9, 8, 4, 6, 2, 5} possible for s in {{0.64094, 0.654004}}
{6, 2, 5, 7, 9, 8, 7, 9, 8, 4, 6, 8} possible for s in {{0.64094, 0.65445}}
{6, 2, 8} possible for s in {{0.582145, 0.614711}}
{6, 5} possible for s in {{0.582145, 1.}}
{6, 8} possible for s in {{0.582145, 0.853944}}
{6, 8, 4} possible for s in {{0.582145, 0.810502}}
{6, 8, 7} possible for s in {{0.582145, 0.853944}}
{6, 8, 7, 6} possible for s in {{0.582145, 0.853944}}
{6, 8, 7, 9} possible for s in {{0.582145, 0.818183}}
{6, 8, 7, 9, 5} possible for s in {{0.582145, 0.811611}}
{6, 8, 7, 9, 8} possible for s in {{0.582145, 0.818183}}
{6, 8, 7, 9, 8, 4} possible for s in {{0.582145, 0.818183}}
{6, 8, 7, 9, 8, 4, 6} possible for s in {{0.763168, 0.818183}}
{6, 8, 7, 9, 8, 4, 6, 2} possible for s in {{0.763168, 0.817831}}
{6, 8, 7, 9, 8, 7} possible for s in {{0.582145, 0.777853}}
{6, 8, 7, 9, 8, 7, 6} possible for s in {{0.582145, 0.777853}}
{6, 8, 7, 9, 8, 7, 6, 5} possible for s in {{0.589537, 0.777853}}
{6, 8, 7, 9, 8, 7, 6, 5, 1} possible for s in {{0.589537, 0.777853}}
{6, 8, 7, 9, 8, 7, 9} possible for s in {{0.582145, 0.751661}}
{6, 8, 7, 9, 8, 7, 9, 5} possible for s in {{0.582145, 0.751661}}
{6, 8, 7, 9, 8, 7, 9, 5, 4} possible for s in {{0.582145, 0.751661}}
{6, 8, 7, 9, 8, 7, 9, 5, 4, 3} possible for s in {{0.582145, 0.751661}}
{6, 8, 7, 9, 8, 7, 9, 8} possible for s in {{0.597869, 0.694824}}
{6, 8, 7, 9, 8, 7, 9, 8, 4} possible for s in {{0.597869, 0.694824}}
{6, 8, 7, 9, 8, 7, 9, 8, 4, 6} possible for s in {{0.597869, 0.694824}}
{6, 8, 7, 9, 8, 7, 9, 8, 4, 6, 2} possible for s in {{0.597869, 0.694824}}
{6, 8, 7, 9, 8, 7, 9, 8, 4, 6, 2, 5} possible for s in {{0.64094, 0.663616}}
{6, 8, 7, 9, 8, 7, 9, 8, 4, 6, 8} possible for s in {{0.64094, 0.663616}}
\end{verbatim}
All we need from this computation is that there is no possible sequence 
of $14$ steps or more.
In other words, following an inside contraction,
the flatness will be greater than $1.01$ times the original flatness
after no more than $14$ steps (including the initial contraction).

\subsection*{Remarks about the list of possible sequences}

The remarks in this section are not needed for the proof,
but they may give further insight into the behavior of the RNM algorithm
as well as clear up some potential ambiguity about the computer output above.

\begin{itemize}
\item
That the sequence $\{ {\tt 4} \}$ is not possible 
(i.e., that an inside contraction with ${\bA}_0$ as worst vertex
immediately increases the flatness by at least a factor of $1.01$)
was shown already near the beginning of 
the proof of Proposition~\ref{prop-14steps}.
\item 
The bound $40.0005$ on $|t|$ and $|u|$ need not be fed into the program,
because the program automatically calculates stronger inequalities
that are necessary for a contraction to occur.
\item 
Move sequences that do not appear in the list may still occur
in actual runs of the RNM algorithm,
but then the flatness must grow by more than a factor of $1.01$.
Similarly, a move sequence appearing in the list may occur while
running the RNM algorithm
even if $s$ lies outside the given interval.
For example, 
one can show that there exist triangles with $0 \le s < 0.582145$
on which the RNM algorithm takes move $\{{\tt 6}\}$.
\item 
One cannot predict from the list {\sl which} step causes the
flatness to grow beyond the factor of $1.01$.
For example, using our definition
the sequence $\{{\tt {6,2, 1, 3, 2}}\}$ is possible
(for a certain range of $s$),
but the extended sequence
$\{ \tt{6, 2, 1, 3, 2, 1}\}$ is not.
This should not be taken to mean that
the last reflection
$\{\tt{1}\}$ caused the increase in flatness, since
reflections do not change the flatness
(measured in the same coordinate frame).
Rather, there may
exist a triangle in the given range
that for the objective function $f(\lambda,\mu)= \frac{1}{2} \lambda^2+\mu$ 
will take the sequence of steps $\{ \tt{6, 2, 1, 3, 2, 1}\}$. 
What must be the case, however, is that
for any such triangle the initial inside contraction $\{ \tt{6} \}$
will have already increased the invariant by a factor at least $1.01$.
\item
One cannot deduce that in every run of the RNM algorithm,
every sufficiently advanced sequence of 14 steps
involves a contraction.
Experiments show that,
when omitting any test for flatness, 
a sequence beginning with $\{{\tt 6}\}$ can
legitimately be followed by a very large number of reflect steps during
which the flatness does not change.
Thus we truly needed Lemma~\ref{lem-must-contract} in addition to
Proposition~\ref{prop-14steps} to complete our proof.
\item
The entire computation took about 11 minutes on an Intel Xeon 3.0~GHz processor.
\end{itemize}

%



\begin{thebibliography}{99}


\bibitem{Audet04}
C.\ Audet (2004).
Convergence results for pattern search algorithms
are tight, {\sl Optimization and Engineering\/} 5, 101--122.

\bibitem{AD03}
C.\ Audet and J.\ E.\ Dennis, Jr. (2003).
Analysis of generalized pattern searches,
{\sl SIAM Journal on Optimization\/} 13, 889-903.

\bibitem{AD06}
C.\ Audet and J.\ E.\ Dennis, Jr. (2006).
Mesh adaptive direct search algorithms for constrained
optimization, {\sl SIAM Journal on Optimization\/} 17,
188--217.


\bibitem{Bertnotes}
D.\ Bertsekas (2003).
{\sl Convex Analysis and Optimization},
Athena Scientific.


\bibitem{CSV09}
A.\ R.\ Conn, K.\ Scheinberg, and L.\ N.\ Vicente (2009).
{\sl Introduction to Derivative-Free Optimization},
Society for Industrial and Applied Mathematics,
Philadelphia, Pennsylvania.

\bibitem{CoopePrice01}
I.\ D.\ Coope and C.\ J.\ Price (2001).
On the convergence of grid-based methods for unconstrained
optimization, {\sl SIAM Journal on Optimization\/} 11,
859--869.

\bibitem{gsl}
GNU Scientific Library (2011). NLopt,
Free Software Foundation, Boston, Massachusetts.\\
{\tt ab-initio.mit.edu/wiki/index.php/NLopt}\_{\tt Algorithms}

\bibitem{Gurson00}
A.\ P.\ Gurson (2000).
``Simplex search behavior in nonlinear optimization'',
Bachelor's honors thesis, Computer Science Department,
College of William and Mary, Williamsburg, Virginia.\\
{\tt www.cs.wm.edu/}$\sim${\tt va/CS495}

\bibitem{HanNeumann06}
L.\ Han and M.\ Neumann (2006).
Effect of dimensionality on the Nelder--Mead simplex method,
{\sl Optimization Methods and Software\/} 21, 1--16.

\bibitem{HSW88}
D.\ Hensley, P.\ Smith, and D.\ Woods (1988).
Simplex distortions in Nelder--Mead reflections,
IMSL Technical Report Series No. 8801,
IMSL, Inc., Houston, Texas.

\bibitem{Kel99}
C.\ T.\ Kelley (1999).
Detection and remediation of stagnation in the Nelder--Mead
algorithm using a sufficient decrease condition,
{\sl SIAM Journal on Optimization\/} 10, 43--55.

\bibitem{Kelley-book}
C.\ T.\ Kelley (1999).
{\sl Iterative Methods for Optimization\/},
Society for Industrial and Applied Mathematics, Philadelphia, Pennsylvania.

\bibitem{Klein}
F.\ Klein (1939).
{\sl Elementary Mathematics from an Advanced Standpoint: Geometry},
 Dover Publications: New York. (Reprint of Volume II of English translation
 of F. Klein, {\sl Elementarmathematik vom H\"{o}heren Standpunkte aus,}
 J. Springer, Berlin 1924--1928.)

\bibitem{KLT-SIREV}
T.\ G.\ Kolda, R.\ M.\ Lewis, and V.\ Torczon (2003).
Optimization by direct search: new perspectives
on some classical and modern methods,
{\sl SIAM Review\/} 45, 385--482.

\bibitem{LRWW98}
J.\ C.\ Lagarias, J.\ A.\ Reeds, M.\ H.\ Wright and P.\ E.\ Wright (1998).
Convergence properties of the Nelder--Mead simplex
algorithm in low dimensions,
{\sl SIAM Journal on Optimization} 9, 112--147.


\bibitem{LTT00}
R.\ M.\ Lewis, V.\ Torczon, and M.\ W.\ Trosset (2001).
Direct search methods: then and now,
in {\sl Numerical Analysis 2000}, Volume 4, 191--207, Elsevier,
New York.

\bibitem{matlab}
{{\sl MATLAB\/}\texttrademark} {\sl User's Guide} (2010). 
R2010b Documentation, The Mathworks, Inc.,
Natick, Massachusetts.\\
{\tt www.mathworks.com/help/techdoc/ref/fminsearch.html}

\bibitem{McKinnon98}
K.\ I.\ M. McKinnon (1998).
Convergence of the Nelder--Mead simplex method
to a non-stationary point,
{\sl SIAM Journal on Optimization} 9, 148--158.

\bibitem{NT02}
L.\ J.\ Nazareth and P.\ Tseng (2002).
Gilding the lily: A variant of the Nelder--Mead algorithm based
on golden section search,
{\sl Computational Optimization and Applications } 22, 133--144.


\bibitem{NM}
J.\ A.\ Nelder and R.\ Mead (1965).
A simplex method for function minimization,
{\sl Computer Journal\/} 7, 308--313.

\bibitem{OR}
J.\ M.\ Ortega and W.\ C.\ Rheinboldt (1970).
{\sl Iterative Solution of Nonlinear Equations in Several Variables},
Academic Press, New York.

\bibitem{PowellActa}
M.\ J.\ D.\ Powell (1998).
Direct search algorithms for optimization calculations,
{\sl Acta Numerica\/} 7, 287--336.

\bibitem{Recipes}
W.\ H.\ Press, B.\ P.\ Flannery, S.\ A.\ Teukolsky, and
W.\ T.\ Vettering (1992). {\sl Numerical Recipes in Fortran: the Art of
Scientific Computing} (second ed.),
Cambridge University Press, Cambridge, UK.

\bibitem{Priceetal02}
C.\ J.\ Price, I.\ D.\ Coope, and D.\ Byatt (2002).
A convergent variant of the Nelder--Mead algorithm,
{\sl Journal of Optimization Theory and Applications\/} 113,
5--19.

\bibitem{Rykov83}
A.\ S.\ Rykov (1983).
Simplex algorithms for unconstrained optimization,
{\sl Problems of Control and Information Theory\/} 12, 195--208.

\bibitem{Schrijver}
A.\ Schrijver (1987). {\sl Theory of Linear and Integer Programming},
John Wiley and Sons, New York.

\bibitem{StuHumph}
A.\ M.\ Stuart and A.\ R.\ Humphries (1996). {\sl Dynamical Systems
and Numerical Analysis}, Cambridge University Press, Cambridge, UK.

\bibitem{Torczon-pstheory}
V.~Torczon (1997).
On the convergence of pattern search algorithms,
{\sl SIAM Journal on Optimization\/} 7, 1--25.

\bibitem{Tse99}
P. ~ Tseng (1999).
Fortified-descent simplicial search method: A general approach,
{\sl SIAM Journal on Optimization}, 10, No. 1, 269--288.

\bibitem{WoodsPhD}
D.\ J.\ Woods (1985).
{\sl An Interactive Approach for Solving Multi-Objective
Optimization Problems}, PhD thesis, Technical Report 85-5,
Department of Computational and Applied Mathematics, Rice University,
Houston, Texas.

\bibitem{Dundee}
M.~H. Wright (1996).
Direct search methods: once scorned, now respectable.
in {\sl Numerical Analysis 1995: Proceedings of the 1995
Dundee Biennial Conference in Numerical Analysis},
D.\ F.\ Griffiths and G.\ A.\ Watson (eds.),
191--208,
Addison Wesley Longman, Harlow, UK.

\end{thebibliography}

\end{document}